\renewcommand{\geq}{\geqslant}
\renewcommand{\leq}{\leqslant}
\newcommand{\tildeHd}{\widetilde{\mathscr{H}}_{d}}
\newcommand{\Hdq}{{\mathscr{H}}_{d}(q)}
\newcommand{\tildeHdq}{\widetilde{\mathscr{H}}_{d}(q)}
\newcommand{\bash}{\backslash}
\newcommand{\IGNORE}[1]{}
\newcommand{\whZ}{\widehat{\Zz}}
\newcommand{\gen}{\mathrm{genus}}
\newcommand{\Id}{\mathrm{Id}}
\newcommand{\area}{\mathrm{area}}
\newcommand{\acos}{\mathrm{acos}}
\newcommand{\Pq}{\mathcal{Q}}
\DeclareMathOperator{\Pic}{Pic}
\newcommand{\OO}{\order}
\newcommand{\mat}[4]{\left[\begin{array}{cc}#1 & #2 \\
                                         #3 & #4\end{array}\right]}
\newcommand{\ic}[1]{\mathfrak{#1}}
\DeclareMathOperator{\GL}{GL}
\DeclareMathOperator{\SL}{SL}
\newcommand{\HH}{\mathscr{H}}
\newcommand{\A}{\mathbb{A}}
\DeclareMathOperator{\modu}{\mathrm{mod}}
\newcommand{\tensor} {\otimes}
\newcommand{\beq}{\begin{displaymath}}
\newcommand{\eeq}{\end{displaymath}}
\newcommand{\beqn}{\begin{equation}}
\newcommand{\eeqn}{\end{equation}}
\def\stacksum#1#2{{\stackrel{{\scriptstyle #1}}{{\scriptstyle #2}}}}
\def\peter#1{\langle #1\rangle}
\newcommand{\Perp}{\underline{\mathrm{Perp}}}
\def\ov#1{\overline{#1}}
\newcommand{\Hdstar}{{\tildeHd}^{*}}
\newcommand{\mcB}{\mathcal{B}}
\newcommand{\mcP}{\mathcal{P}}
\newcommand{\mcE}{\mathcal{E}}
\newcommand{\im}{\mathrm{Im}}
\newcommand{\refs}{\eqref}
\newcommand{\ra}{\rightarrow}
\newcommand{\Nn}{{\mathbb{N}}}
\newcommand{\Zz}{{\mathbb{Z}}}
\newcommand{\Rr}{{\mathbb{R}}}
\newcommand{\Qq}{{\mathbb{Q}}}
\newcommand{\Ff}{{\mathbb{F}}}
\newcommand{\Aa}{{\mathbb{A}}}
\newcommand{\mcT}{{\mathcal{T}}}
\newcommand{\mcA}{{\mathcal{A}}}
\newcommand{\dev}{\mathrm{dev}_{d}}
\newcommand{\mfa}{\mathfrak{a}}
\newcommand{\mfq}{\mathfrak{q}}
\newcommand{\mfp}{\mathfrak{p}}
\newcommand{\eps}{\varepsilon}
\numberwithin{equation}{section}
\newcommand{\tr}{\mathrm{tr}}
\newcommand{\Nr}{\mathrm{Nr}}
\newcommand{\Aut}{\mathrm{Aut}}
\newcommand{\Ad}{\mathrm{Ad}}
\newcommand{\disc}{\mathrm{disc}}
\newcommand{\C}{\mathbb{C}}
\newcommand{\ord}{\mathrm{ord}}
\newcommand{\G}{\mathbf{G}}
\newcommand{\B}{\mathrm{B}}
\newcommand{\PB}{\mathrm{PB}^{\times}}
\newcommand{\PGL}{\mathrm{PGL}}
\newcommand{\SO}{\mathrm{SO}}
\newcommand{\order}{\mathscr{O}}
\renewcommand{\H}{\mathscr{H}}
\newcommand{\Z}{\mathbb{Z}}
\newcommand{\Q}{\mathbb{Q}}
\newcommand{\adele}{\mathbb{A}}
\newcommand{\Aaf}{\mathbb{A}_{f}}
\newcommand{\Norm}{\mathrm{N}}
\newcommand{\be}{\begin{equation}}
\newcommand{\ee}{\end{equation}}
\newcommand{\bea}{\begin{eqnarray*}}
\newcommand{\eea}{\end{eqnarray*}}
\newcommand{\bfx}{\mathbf{x}}
\newcommand{\rmr}{\mathrm{red}}
\newcommand{\obfx}{\overline{\mathbf{x}}}
\newcommand{\Hd}{\mathscr{H}_d}
\newcommand{\Gd}{\Hd}
\newcommand{\Bdel}{\mathcal{B}_\delta}
\newcommand{\Bscr}{\mathcal{B}}
\newcommand{\x}{\mathbf{x}}
\renewcommand{\P}{\mathcal{P}}
\newcommand{\Qp}{\Q_p}
\newcommand{\Zp}{\Z_p}
\newcommand{\Zhat}{\whZ}
\DeclareFontFamily{OT1}{rsfs}{}
\DeclareFontShape{OT1}{rsfs}{n}{it}{<-> rsfs10}{}
\DeclareMathAlphabet{\mathscr}{OT1}{rsfs}{n}{it}
\theoremstyle{plain}
\newtheorem*{thm*}{Theorem}
\newtheorem{thm}[subsection]{Theorem}
\newtheorem*{cor}{Corollary}
\newtheorem{lem}[subsubsection]{Lemma}
\newtheorem*{example*}{Example}
\newtheorem{prop}[subsection]{Proposition}
\newtheorem*{prop*}{Proposition}
\newtheorem*{lem*}{Lemma}
\theoremstyle{definition}
\newtheorem{rem}{Remark}[section]
\newtheorem*{rem*}{Remark}
\begin{document}

\newpage
\title[Linnik's ergodic method and points on spheres]{Linnik's ergodic method and\\ the distribution of integer points on spheres}
\author[J. S. Ellenberg, Ph. Michel and A. Venkatesh]{Jordan S. Ellenberg, Philippe Michel and Akshay Venkatesh}
\thanks{
We thank agencies that have generously supported our research. J.E. was partially supported by  NSF-CAREER Grant DMS-0448750 and a Sloan Research Fellowship; A.V.
was partially supported by a Packard fellowship, a Sloan Research Fellowship, 
and an NSF grant; Ph.M. is partially supported by the Advanced research Grant 228304 from the European Research Council
and the SNF grant 200021-125291.}
 \begin{abstract}
 We discuss Linnik's work on the distribution of integral solutions to $$x^2+y^2+z^2 =d,\hbox{ as $d \rightarrow \infty$.}$$
We give an exposition of Linnik's ergodic method; indeed, by
using large-deviation
 results for random walks on expander graphs, we establish 
 a refinement of his equidistribution theorem. We discuss the connection of these ideas with modern developments (ergodic theory on homogeneous spaces, $L$-functions). 

 \end{abstract} 
 \maketitle

 \tableofcontents

\part{Linnik equidistribution theorems}
 \section{Introduction}
 Let $d>1$ be an integer which, for simplicity of exposition, we assume to be {\em squarefree}, and let $\Hd$  be the set of integer points on a $2$-dimensional sphere of radius ${d}^{1/2}$:
$$\Hd:=\{\bfx=(x,y,z)\in\Zz^3,\ x^2 + y^2 + z^2 =d\}.$$

The study of $\Hd$ is a classical question of number theory. Surprisingly,
there are interesting results about $\Hd$ that have been proved in the last
two decades, and simply stated problems that remain unresolved.  In increasing order of fineness, one may ask:
\begin{enumerate}
\item When is $\Hd$ nonempty?
\item If nonempty, how large is $\Hd$, and how can we generate points in $\Hd$?
\item If $\Hd$ get large,  how is it distributed on the sphere of radius $d^{1/2}$?
\end{enumerate}

The first question was studied by Legendre and the answer is: 
$$\hbox{\em $\Hd$ is nonempty if, and only if, $d$ is not of the form $4^a(8b-1)$ ($a,b\in\Nn$),}$$
(or put in other terms, the quadratic equation  $x^2+y^2+z^2=d$ satisfies the {\em Hasse principle}.)
Legendre's proof from 1798 however was incomplete\footnote{Legendre assumed the existence of primes in arithmetic progressions, which was proven about 40 years later by Dirichlet.} and the first complete proof was given by Gauss three years later in his {\em Disquisitiones arithmeticae} \cite{Gauss}.

The second question is somewhat subtler and its resolution is the consequence of the work of several people.
 The fundamental insight however come from Gauss work who showed that $\Hd$ is closely connected with
   the {\em set of classes of binary quadratic forms of discriminant $-d$}. 
 
This relation amounts, in more modern terms, to the existence of a natural {\em action} of the ideal class group of the quadratic ring $\Zz[\sqrt{-d}]$ on the quotient $\SO_3(\Z)\bash\Hd$. This action is, in fact, {\em transitive}  (at least if $d$ is squarefree, which we assume here) and is faithful if and only if $d \equiv 3$ modulo $8$. An exposition of these facts is given in \S \ref{torsorA}. In particular, whereas $\SO_3(\Z)\bash\Hd$ does itself not have a natural group structure (what would the identity be?) the notion of ``arithmetic progression'' makes sense on $\SO_3(\Z)\bash\Hd$. 
 
 A first consequence of the existence of this action is an exact formula relating $|\Hd|$
to the class number  of $\Qq(\sqrt{-d})$.  From there, Dirichlet's class number formula expresses
$|\Hd|$ in terms of the residue at $1$ of the Dedekind $\zeta$-function of that field, and {\em Siegel's theorem} controls this value quite precisely, yielding
\begin{equation}\label{Hdsize}|\Hd|=d^{1/2+o(1)}.
\end{equation}

The third question is the main focus of this paper; whereas it is not at first clear that is a worthy successor to the first two, its investigation has proved very rich. 
Progress on it  has been  entwined with the study of modular $L$-functions,  as well as to the study of dynamics on homogeneous spaces. The first significant answer regarding this question are due to Y. V. Linnik who, in the late 50's, proved amongst other results the following,
 
 \begin{thm}[Linnik] \label{linnikinfty} As $d\ra+\infty$ amongst the squarefree integers satisfying $d\equiv \pm 1(5)$, the set
$$\{\frac{\mathbf{x}}{\sqrt{d}},\mathbf{x} \in \Hd \}\subset S^2$$
becomes equidistributed on the unit sphere $S^2$ with respect to the Lebesgue probability measure.
\end{thm}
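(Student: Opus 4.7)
The plan is to follow Linnik's ergodic method, enhanced, as advertised in the abstract, by large-deviation estimates for random walks on expander graphs. Combining Weyl's equidistribution criterion with the density of spherical harmonics in $C(S^2)$ and the $\SO_3$-invariance of Lebesgue measure, it suffices to prove that for every non-constant spherical harmonic $Y$,
\[
W_d(Y) := \frac{1}{|\Hd|}\sum_{\bfx\in\Hd} Y(\bfx/\sqrt d) \longrightarrow 0\quad\text{as } d\to+\infty;
\]
by \refs{Hdsize}, any power-saving in $d$ would suffice.

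The structural input is the torsor described in \S \ref{torsorA}: the class group $\Pic(\Zz[\sqrt{-d}])$ acts transitively on $\SO_3(\Zz)\bash \Hd$. The hypothesis $d\equiv \pm 1\pmod 5$ is equivalent to $-d$ being a square modulo $5$, i.e.\ to the splitting of $5$ in $\Zz[\sqrt{-d}]$ as $\mfp\overline{\mfp}$. The classes $[\mfp],[\overline{\mfp}]$ act by permutations of $\SO_3(\Zz)\bash\Hd$, producing a Hecke-type correspondence $\mathcal{T}_5$ on the finite set. Crucially, $\mathcal{T}_5$ admits a \emph{geometric lift}: through the adelic description of $\Hd$ it comes from a $\SO_3(\Rr)$-equivariant Hecke correspondence on $S^2$, on which every spherical harmonic of degree $\ell\geq 1$ is an eigenvector whose eigenvalue is strictly smaller in magnitude than the trivial one --- Ramanujan--Deligne for the associated classical modular forms gives the sharp bound $|\lambda_\ell|\leq 2\sqrt 5$. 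Pulling this back to $\SO_3(\Zz)\bash\Hd$ produces a uniform (in $d$) spectral gap for the graph $\Gamma_d$ built from $\mathcal{T}_5$: it is a Ramanujan expander.

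With this in hand I would run random walks of length $N\asymp C\log d$ on $\Gamma_d$ starting from the various base points of $\SO_3(\Zz)\bash\Hd$. The spectral gap ensures rapid $L^2$ mixing on the finite set; thanks to the equivariant lift, iterates of $\mathcal{T}_5$ applied to $Y$ decay geometrically in $N$, so that averaging $Y$ along a typical walk is $o(1)$. Summing over the base points and relating this average to the sum defining $W_d(Y)$ gives the desired estimate.

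The main obstacle, and the reason the refinement beyond a pure spectral-gap argument is needed, is to ensure that the random trajectories do not concentrate into a small spherical cap on $S^2$ where $Y$ has large sign-definite mass: spectral gap alone controls $L^2$ fluctuations on the finite set $\SO_3(\Zz)\bash\Hd$, but here $Y$ is a continuous function tested via a geometric projection to $S^2$. Closing this gap is precisely the role of a large-deviation inequality for random walks on expanders, which gives exponential suppression of atypical trajectories and transfers the finite-set mixing into honest spherical equidistribution.
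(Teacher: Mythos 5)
There is a genuine gap, and it sits at the heart of your argument: the claim that ``pulling back'' the spectral gap of the Hecke operator $T_5$ produces a Ramanujan expander structure on the finite set $\SO_3(\Zz)\bash\Hd$ itself. By Proposition \ref{lem:Linnik1}, for $\x\in\Hd$ only \emph{two} of the six points $w\x$, $w\in\mcA_5$, lie in $\Hd$; the graph on $\tildeHd$ induced by the action of $[\mfp]^{\pm1}$ is therefore $2$-regular, i.e.\ a disjoint union of cycles of length $\mathrm{ord}([\mfp])$ --- which, for all anyone knows, may be as small as $\log d$ (see the discussion in \S 2.6). Such a graph has no spectral gap, and there is no self-replication identity of the form $W_d(T_5Y)=6\,W_d(Y)$ because the missing four neighbours have honest denominator $5$. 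Consequently the geometric decay of $T_5^N Y$ (which is the Lubotzky--Phillips--Sarnak statement that a \emph{full} Hecke ball equidistributes) says nothing about the average of $Y$ along the single non-backtracking path that the trajectory of $\x$ actually traces out. A direct attack on the Weyl sums $W_d(Y)$ is essentially Duke's theorem and requires bounds on Fourier coefficients of half-integral weight forms; the whole point of the hypothesis $d\equiv\pm1$ modulo $5$ and of the ergodic method is to avoid exactly that.

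What your proposal is missing is the Diophantine mechanism that converts ``almost all non-backtracking paths on the expander are well-distributed'' (Proposition \ref{chernoff}, applied to $\Hd(q)$ or to the Hecke tree on $S^2$, which is where the spectral gap genuinely lives) into a statement about the \emph{specific} arithmetic trajectories attached to points of $\Hd$. A priori all $d^{1/2+o(1)}$ points of $\Hd$ could project to a handful of exceptional paths. The paper rules this out with two inputs you never invoke: the shadowing lemma (Proposition \ref{Naddles}), which says that two points whose truncated trajectories of length $2\ell$ agree must satisfy $\x\equiv\pm\x'$ modulo $5^\ell$, and Linnik's basic lemma (Proposition \ref{BasicLemma}), which bounds by $d^{\eps}$ the number of pairs with a prescribed inner product and hence (Proposition \ref{meansquare}) shows that for $5^{2\ell}\asymp|\Hd|$ the trajectories are essentially pairwise distinct. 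Only then does one get $\gg d^{1/2-\eps}$ distinct paths, a positive proportion of all paths of that length, so that the large-deviation bound forces almost all of them --- and hence almost all of $\Hd$ --- to be well-distributed. Your final paragraph correctly senses that a large-deviation estimate is needed, but without the shadowing and basic lemmas there is no bridge from typical random paths to the arithmetic ones, and the argument does not close.
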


  \begin{figure}
\centering
\resizebox{!}{6.5cm}{
\includegraphics{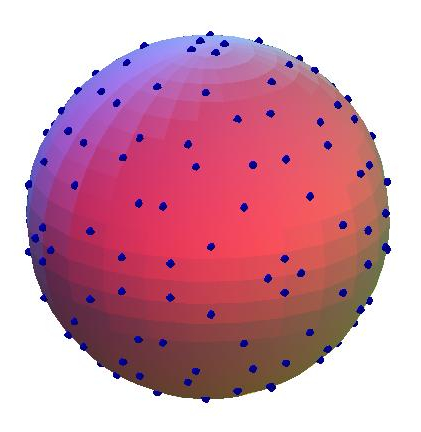}
}

\centering
\resizebox{!}{6.5cm}{
\includegraphics{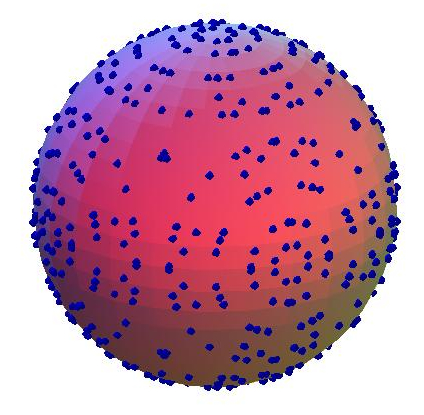}
}

\centering
\resizebox{!}{6.5cm}{
\includegraphics{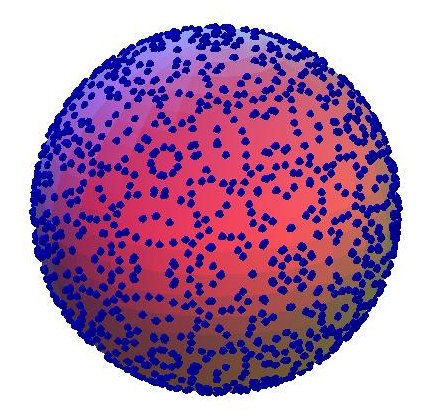}
}
\caption{$d^{-1/2}\Hd\subset S^2$ for $d=101,\ 8011,\ 104851$}
\end{figure}   
In explicit terms, that means that if 
$$\rmr_{\infty}: \Hd\mapsto S^2$$
 denotes the ``scaling'' map $\bfx\mapsto d^{-1/2}.\bfx$, then
for any measurable subset $\Omega \subset S^2$ whose boundary has Lebesgue measure zero
$$\frac{|\rmr_{\infty}^{-1}(\Omega)|}{ |\Hd|}= {\area(\Omega)}(1+o(1)),\ d\ra+\infty;$$
(we take the normalization $\area(S^2)=1$).

Linnik obtained this by an ingenious technique which he called the ``ergodic method.''  This method was generalized later (notably by Linnik's student, Skubenko) to establish several remarkable results about the distribution of the representations of 
large integers by (integral) ternary quadratic forms \cite{EPNF}. Until recently, Linnik's ergodic
   method remained surprisingly little-known, although simplified treatments were
   given by a number of authors  \cite{Teterin,Malyshev}; one possible reason is that the
   method did not fit into ergodic theory as the term is now usually understood, i.e. dynamics of a measure-preserving transformation.  
   
   Removing the constraint $d \equiv \pm 1$ modulo $5$ proved to be very difficult, and 
   was resolved only thirty years later by W. Duke \cite{Duke}; see \S \ref{dukeHA} for discussion.

    The aim of the present paper is to revisit and explain in a slightly different language Linnik's original approach   and to present further refinements which do not
     seem present in Linnik's work and do not seem accessible to the other known methods. 
     We have endeavoured to simplify the original proofs of Linnik as far as possible, and have favoured explicitness and directness over generality and (more regrettably) over more conceptual approaches. 

\subsection{A refinement of Theorem \ref{linnikinfty}} 

Consider, for $x\in S^2$ and $\rho>0$, the spherical cap $\Omega(x,\rho)$ of center $x$ and radius $\rho>0$
 (i.e. the ball  of center $x$ and radius $\rho$ with respect to the standard Riemannian metric on $S^2$).
Define the {\em deviation} of $\Omega$ to be $$\dev(\Omega) = \frac{1}{\area(\Omega)}\frac{|\rmr_{\infty}^{-1}(\Omega)|} { |\Hd|  }-1.$$  Theorem \ref{linnikinfty} is equivalent to the fact that for any $x\in S^2$, $\rho>0$, 
$$\dev(\Omega(x,\rho))\ra 0\hbox{ as }d\ra\infty.$$
We shall establish the following refinement of the prior result:
\begin{thm*} \label{MVinfty} 
Fix $\delta , \eta > 0$ . For any $\rho\geq d^{-1/4+\delta}$,
the measure of $x \in S^2$ for which $|\dev(\Omega(x,\rho))| \geq \eta$,
tends to $0$, as $d \rightarrow \infty$ as above (i.e. squarefree and $\equiv\pm 1(5)$). 
\end{thm*}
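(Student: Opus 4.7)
The plan is to derive the stated measure bound from an $L^2$-estimate:
$$\int_{S^2} \mathrm{dev}(\Omega(x,\rho))^2 \, d\mu(x) = o_{\eta}(1) \qquad (d \to \infty),$$
from which Chebyshev's inequality immediately gives the conclusion. Expanding the square, interchanging sums, and using Fubini, the $L^2$-moment equals
$$\frac{1}{\mathrm{area}(\Omega)^2\,|\Hd|^2}\sum_{\mathbf{y}_1,\mathbf{y}_2 \in \Hd} A(\mathbf{y}_1,\mathbf{y}_2) \;-\; 1,$$
where $A(\mathbf{y}_1,\mathbf{y}_2)$ is the area of intersection of the two spherical caps of radius $\rho$ centered at $\mathbf{y}_1/\sqrt d$ and $\mathbf{y}_2/\sqrt d$. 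The diagonal $\mathbf{y}_1 = \mathbf{y}_2$ contributes exactly $(|\Hd|\cdot\mathrm{area}(\Omega))^{-1}$; by \eqref{Hdsize} and the hypothesis $\rho \geq d^{-1/4+\delta}$, this is $O(d^{-2\delta+o(1)})$ and hence negligible. (Observe that $d^{-1/4}$ is precisely the scale at which the diagonal becomes comparable to the main term, which explains the threshold in the theorem.)

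The task is thereby reduced to the \emph{pair correlation} upper bound
$$\sum_{\mathbf{y}_1 \ne \mathbf{y}_2} A(\mathbf{y}_1,\mathbf{y}_2) \leq (1+o(1))\,|\Hd|^2\,\mathrm{area}(\Omega)^2;$$
since $A$ is supported on pairs whose scaled images lie within spherical distance $2\rho$, this is an estimate on how often the points of $\Hd/\sqrt d$ cluster at scale $\rho$. Using the class-group torsor structure recalled in \S\ref{torsorA}, such a pair is parametrized, after fixing $\mathbf{y}_1$, by the ideal class $[\mathfrak{a}] \in \mathrm{Pic}(\Zz[\sqrt{-d}])$ that moves $\mathbf{y}_1$ to $\mathbf{y}_2$; close pairs correspond to classes whose action on $S^2$ displaces the base point by $\ll\rho$. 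Counting them therefore becomes a question about quantitative equidistribution of the class-group orbit at small scale.

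The input we exploit is the hypothesis $d \equiv \pm 1\pmod 5$: the prime $5$ then splits in $\Qq(\sqrt{-d})$ as $\mathfrak{p}\bar{\mathfrak{p}}$, so the classes $[\mathfrak{p}]$, $[\bar{\mathfrak{p}}]$ generate a Hecke-type random walk on $\SO_3(\Zz)\backslash\Hd$. The resulting graph is an expander: the requisite spectral gap follows from bounds toward Ramanujan for half-integral weight forms (Iwaniec, Duke). The idea is then to run this walk for $\asymp \log d$ steps, apply a Chernoff-style large-deviation inequality for random walks on expanders to the indicator of a neighbourhood of the diagonal in $\Hd \times \Hd$, and unfold the resulting empirical concentration into the sought off-diagonal bound — with $\rho$ appearing as the size of the test set. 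Combined with the diagonal estimate, this delivers $\int \mathrm{dev}^2 = o(1)$.

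The main obstacle is precisely the precision required at the threshold scale $\rho \sim d^{-1/4+\delta}$: because the diagonal barely cancels, the off-diagonal estimate must be essentially sharp with a \emph{power-saving} error, not just an $o(1)$ error. This forces one to use the full quantitative strength of the spectral gap for the Hecke walk (rather than just qualitative mixing), to choose the number of walk steps carefully as a function of $\rho$ and $d$, and to track the dependence on $\rho$ through the large-deviation bound when the test set is as small as a $\rho$-neighbourhood of the diagonal. All other steps (diagonal separation, Chebyshev, the torsor reduction) are of a routine nature; the entire weight of the argument rests on the effective expansion of the Hecke graph.
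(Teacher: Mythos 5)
Your reduction to the off-diagonal pair-correlation bound $\sum_{\mathbf{y}_1\neq\mathbf{y}_2}A(\mathbf{y}_1,\mathbf{y}_2)\leq(1+o(1))\,|\Hd|^2\area(\Omega)^2$ is where the argument breaks down: that bound is not provable by the tools you invoke in the range $\rho\geq d^{-1/4+\delta}$, and the paper's proof is structured precisely to avoid needing it. Linnik's basic lemma bounds the number of pairs with a \emph{fixed} inner product $e$ by $d^{\eps}$; since proximity within $2\rho$ on the sphere only confines $e=\x_1\cdot\x_2$ to an interval of length $\asymp d\rho^2$, the resulting count of close pairs is $d^{\eps}(1+d\rho^2)$, which at $\rho=d^{-1/4+\delta}$ is about $d^{1/2+2\delta}$ — larger than the target $|\Hd|^2\area(\Omega)^2\asymp d^{4\delta+o(1)}$ by roughly $d^{1/2}$. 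Feeding this into your second moment gives only $\int\dev(\Omega(x,\rho))^2\,dx\ll d^{\eps}$, not $o(1)$. This is exactly the phenomenon recorded in \S\ref{harmonic}: the second moment of $\dev$ is, via Waldspurger's formula, a mean value of central $L$-values; Linnik's lemma controls it only with the extra factor $(1+d/q^4)$ (i.e.\ only in the complementary range $\rho<d^{-1/4}$), and making it $o(1)$ in your range is known only under GRH. The expander machinery cannot rescue this: Proposition~\ref{chernoff} concerns the fraction of non-backtracking \emph{paths} of length $2\ell$ spending anomalous time in a set whose density is bounded \emph{below} by $\eta$ (the constants $c_1,c_2$ degenerate as that density shrinks), so applying it to a $\rho$-neighbourhood of the diagonal, of density $\asymp\rho^2\to0$, is not legitimate, and in any case it yields a statement about almost all paths rather than a sharp count of close pairs of points.

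The missing idea — and the reason the exponent $1/4$ is reachable unconditionally — is to run the entire argument at the level of trajectories rather than points. One replaces each $\x$ by the length-$2\ell$ segment $\gamma_{\x}^{(\ell)}$ of its orbit in the Hecke tree at $5$, assumes for contradiction that the bad set has measure $\geq\eta$, and deduces that a proportion $\geq\delta\eta/2$ of the trajectories overspend time there. Only two inputs are then needed: the large-deviation estimate applied to the \emph{fixed} bad set of density $\geq\eta$ (not to a shrinking set), giving $\ll d^{1/2-\tau}$ anomalous paths; and the shadowing lemma, which converts ``two trajectories agree (resp.\ stay close) for $2\ell$ steps'' into the rigid congruence $\x\equiv\pm\x'\ (\modu\ 5^{\ell})$, a condition for which Linnik's basic lemma \emph{is} sharp enough (Proposition~\ref{meansquare}) to show that $\x\mapsto\gamma_{\x}^{(\ell)}$ is essentially injective once $5^{2\ell}\asymp|\Hd|$. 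The threshold $d^{-1/4}$ then arises from the constraint $5^{2\ell}\leq 5|\Hd|\approx d^{1/2}$ relating the trajectory length to the spatial resolution, not from a diagonal-versus-main-term comparison in a second moment.
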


Note that this theorem implies Theorem \ref{linnikinfty}. Moreover, in view of \refs{Hdsize}, it is
easy to see that the exponent $1/4$ in the above statement  is {\em optimal}. It would, of course,
be desirable to replace ``{\em tends to $0$ as $d\rightarrow \infty$}'' by
``{\em equals $0$ for sufficiently large $d$.}'' This seems extremely difficult. On the other hand, by different methods, it has been shown that this holds in a restricted range $\rho\geq d^{-\kappa}$ for $\kappa>0$ some absolute constant
(cf. \S \ref{harmonic}).

\subsection{A modular analog} It is also possible to prove versions of the above results where, instead of studying the 
position of $\Hd$ on the sphere, we study the congruence properties of points in $\Hd$. 
For $q$ an integer coprime with $d$, let $\Hd(q)$ denote the ``sphere modulo $q$''
$$\Hd(q):=\{\obfx=(\ov x,\ov y,\ov z)\in(\Zz/q\Zz)^3,\ \ov x^2 + \ov y^2 + \ov z^2 \equiv d \modu q\}.$$

\begin{thm*}[Linnik] \label{linnikQ}  Let $q$ be a fixed integer, coprime with $30$. As $d\ra+\infty$ amongst the squarefree integers satisfying $d\equiv \pm 1(5)$, $(d,q)=1$, the multiset
$$\{ \x\ (\modu q),  \ \x \in \Hd\} \subset \Hd(q)$$
becomes equidistributed on $\Hd(q)$ with respect to the uniform measure. 
\end{thm*}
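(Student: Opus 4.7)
\emph{Plan.} The argument is structurally parallel to the proof of Theorem \ref{linnikinfty}, with the sphere $S^2$ replaced by the finite set $\Hdq$, and with spherical harmonics replaced by mean-zero functions on $\Hdq$. Since the reduction map $\rmr_q : \Hd \to \Hdq$ is $\SO_3(\Zz)$-equivariant, we may pass to $\SO_3(\Zz)\bash\Hd$, which carries a transitive action of the ideal class group $\Pic(\Zz[\sqrt{-d}])$ by \S\ref{torsorA}.

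\emph{Step 1: Hecke walk at the prime $5$.} The assumption $d\equiv\pm 1\pmod 5$ guarantees that $5$ splits in $K=\Qq(\sqrt{-d})$ as $5\OK = \mathfrak{p}\bar{\mathfrak{p}}$. The classes $[\mathfrak{p}]$ and $[\bar{\mathfrak{p}}]=[\mathfrak{p}]^{-1}$ define a $2$-regular non-backtracking walk on $\SO_3(\Zz)\bash\Hd$, whose $k$-th iterate is exactly the Hecke correspondence $T_5^{k}$ restricted to the $\mathfrak{p}$-adic direction.

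\emph{Step 2: Compatibility with reduction mod $q$.} Because $(q,5d)=1$, the action of $[\mathfrak{p}]$ can be realised by right-multiplication by a quaternion of reduced norm $5$ in the definite quaternion algebra ramified at $\{\infty,d\}$; equivalently, by the action of a fixed element of $\SO_3(\Qq_5)$ of determinant divisible by $5$. This action is local at $5$, hence intertwines with $\rmr_q$, and descends to a well-defined correspondence on $\Hdq$.

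\emph{Step 3: Spectral gap on $\Hdq$.} For $q$ fixed and coprime to $30$, $|\Hdq| = O_q(1)$ and $\Hdq$ depends on $d$ only through its residue mod $q$; so one is dealing with a finite family of graphs parametrised by $d \bmod q$, and an elementary local computation at $5$ (equivalently, diagonalising the Hecke operator on the representation of $\SO_3(\Zz/q\Zz)$ afforded by functions on $\Hdq$) gives uniformly non-trivial eigenvalue bounds: every eigenvalue of $T_5$ on the orthogonal complement of the $\SO_3(\Zz/q\Zz)$-invariants satisfies $|\lambda| \leq \lambda_0 < 2$ for some $\lambda_0$ depending only on $q$.

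\emph{Step 4: Conclusion, and the main obstacle.} Taking $k$ a sufficiently small multiple of $\log d$, the core of Linnik's ergodic method produces a large set of $\x\in\Hd$ for which the iterates $[\mathfrak{p}]^j\cdot\x$, $|j|\leq k$, are pairwise distinct in $\SO_3(\Zz)\bash\Hd$. Combining this with the spectral gap of Step~3 and a standard large-deviation inequality for random walks on expanders yields
\beq
\frac{1}{|\Hd|}\sum_{\x\in\Hd} f(\rmr_q(\x)) = \frac{1}{|\Hdq|}\sum_{\y\in\Hdq} f(\y) + O_q\bigl(\|f\|_\infty\, d^{-\delta}\bigr)
\eeq
for every $f:\Hdq\to\Cc$, which is the desired equidistribution. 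The principal obstacle is the ``few collisions'' estimate: one must bound the number of pairs $i\neq j$ with $[\mathfrak{p}]^i\cdot\x = [\mathfrak{p}]^j\cdot\x$. This amounts to a lower bound for the order of $[\mathfrak{p}]\in\Pic(\OK)$, which Linnik obtains via a pigeonhole argument inside $\Pic(\OK)$ combined with the Siegel bound $|\Pic(\OK)| = d^{1/2+o(1)}$ recalled in \refs{Hdsize}. Granted this input, the passage from the archimedean statement Theorem \ref{linnikinfty} to the modular statement is essentially formal, both being consequences of the same ergodic/spectral mechanism on $\SO_3(\Zz)\bash\Hd$.
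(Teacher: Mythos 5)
Your Steps 1--2 match the paper's setup, but Step 4 misidentifies the heart of the argument, and the resulting proof has a genuine gap. The large-deviation inequality is a statement about the uniform measure on non-backtracking \emph{paths} of length $2k$ on the finite graph $\Hdq$: only an exponentially small fraction of them is badly distributed. To convert this into a statement about \emph{points} $\x\in\Hd$, you must know that the map $\x\mapsto\gamma_\x^{(k)}$ sending a point to its reduced trajectory is close to injective; a priori all $d^{1/2+o(1)}$ points could produce the same handful of paths mod $q$, and then the path count tells you nothing. The fact that the iterates $[\mfp]^j\cdot\x$ of a \emph{single} $\x$ are pairwise distinct (your ``few collisions'' estimate) is irrelevant to this; what is needed is a bound on the number of pairs $\x\neq\x'$ with $\gamma_{\x}^{(k)}=\gamma_{\x'}^{(k)}$. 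That is exactly what Proposition \ref{Naddles} (agreement of trajectories for $2\ell$ steps forces $\x\equiv\pm\x'$ modulo $q5^{\ell}$) combined with Proposition \ref{BasicLemma} (Linnik's basic lemma: at most $d^{\eps}$ pairs in $\Hd^2$ with a prescribed dot product) delivers, via Proposition \ref{meansquare}. Your proposal contains no substitute for this $5$-adic rigidity plus representation-counting input, which is the actual ``basic lemma'' of Linnik's method and the step the whole proof turns on.

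Moreover, the replacement you propose is wrong on its own terms. A lower bound for the order of $[\mfp]$ in $\Pic(\OK)$ does not follow from Siegel's bound plus pigeonhole: Siegel controls $|\Pic(\OK)|$, not the order of a particular element, and the paper explicitly notes that it is open whether $|[\mfp]^{\Zz}|$ exceeds any fixed power of $d$ infinitely often; the only available bound is the elementary $|[\mfp]^{\Zz}|\gg\log d$ (if $\mfp^{k}$ is principal then $5^{k}$ is a norm from $\OK$, forcing $5^{k}\geq d$). Fortunately $\log d$ is all one needs for the trajectory length, but again that is not where the difficulty lies. A secondary problem is Step 3: for fixed $q$ the required spectral input is indeed just connectivity and non-bipartiteness of $\Hdq$, but this is not an ``elementary local computation at $5$.'' Connectivity is the assertion that the subgroup of $\SO_{3}(\Zz/q\Zz)$ generated by $A,B,C$ acts transitively on $\Hdq$, which the paper obtains from strong approximation for the norm-one quaternions, and non-bipartiteness comes from the analysis of the one-dimensional automorphic representations; note also that $T_{5}$ does not commute with the $\SO_{3}(\Zz/q\Zz)$-action, so one cannot simply decompose that representation to diagonalise it.
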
 

In explicit terms, that means that if $$\rmr_q:\Hd\mapsto\Hd(q)$$ denotes the reduction modulo $q$ map, then,
for any $\obfx\in\Hd(q)$,
$$\frac{|\rmr_{q}^{-1}(\obfx)|}{ |\Hd| }= \frac{ 1 }{|\Hd(q)|}(1+o(1)),\ \mathrm{as}\ d\ra+\infty.$$
We prove also the following refinement: define the {\em deviation} at $\obfx\in\Hd(q)$ to be
$$\dev(\obfx) = \frac{|\rmr_q^{-1}(\obfx)|}{|\Hd|/|\Hd(q)|}- 1$$
\begin{thm} \label{mvQ} 
Fix $\nu, \delta > 0$ and suppose that $q \leq d^{1/4-\nu}$ and $(q,30) = 1$.
The fraction of $\obfx \in \Hd(q)$ for which $|\dev(\obfx)| > \delta$ tends to zero as $d \rightarrow \infty$ with $d \equiv \pm 1 (5)$.
\end{thm}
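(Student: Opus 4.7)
The plan is to apply Linnik's ergodic method in a modular setting, parallel to the spherical theorem. The key input, as in \S\ref{torsorA}, is the transitive action of the class group $\Pic(\Z[\sqrt{-d}])$ on $\SO_3(\Z)\bs\Hd$. Reduction modulo $q$ induces an equivariant map
\[
\rmr_q : \SO_3(\Z)\bs \Hd \longrightarrow \SO_3(\Z/q\Z)\bs \Hdq,
\]
where the class-group action on the right factors through a finite quotient. Under the hypothesis $d\equiv\pm 1(5)$, a positive proportion of small rational primes $\ell$ split in $\Q(\sqrt{-d})$; for each such $\ell$, a choice of prime $\mathfrak{p}\mid\ell$ yields a degree-$2$ Hecke-type correspondence $T_{\mathfrak{p}}$ on $\Hdq$. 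Averaging these correspondences over all split $\ell\leq L$ (for a parameter $L$ to be chosen) produces a self-adjoint Markov operator $M$ on $L^2(\Hdq)$ that drives a random walk compatible with $\rmr_q$.

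The first technical step is a \emph{uniform spectral gap} for $M$ on the orthogonal complement of the constants in $L^2(\Hdq)$. This is where automorphic input enters: by the Jacquet--Langlands correspondence, the operators $T_{\mathfrak{p}}$ transfer to Hecke operators on the Hamilton quaternion algebra with level structure at primes dividing $q$, and Deligne's theorem (Ramanujan--Petersson) yields $|\lambda_{\mathfrak{p}}|\leq 2\sqrt{\ell}$ on every nontrivial eigenspace. After normalization and averaging over $\ell\leq L$, one obtains a spectral gap $1-\beta>0$ bounded below independently of $d$ and $q$. The second step is a Chernoff/Alon--Chung/Gillman large-deviation estimate for the random walk driven by $M$: for a walk of length $n\asymp\log d$, the proportion of starting vertices whose $n$-step visit statistics deviate by more than $\delta$ from uniform is at most $|\Hdq|\exp(-c\delta^2 n)$. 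Expressing $|\Hd|^{-1}\cdot|\Hdq|\cdot|\rmr_q^{-1}(\obfx)|$ as a matrix coefficient of $M^n$ between delta masses at $\obfx$ and a suitable reference point then converts the large-deviation bound into the required estimate on the fraction of $\obfx$ with $|\dev(\obfx)|>\delta$.

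The main obstacle is calibrating the parameters to achieve the optimal range $q\leq d^{1/4-\nu}$. The exponent $1/4$ reflects the tension between two requirements on the walk length: on the one hand $n$ must be large enough for $M^n$ to mix on $\Hdq$, which (since $|\Hdq|\asymp q^2$) requires $n\gg(\log q)/\log L$; on the other hand $n$ must stay well below the ``birthday'' threshold $\tfrac{1}{2}\log|\Hd|/\log L$ at which paths in the ambient class-group walk begin to collide, past which the combinatorial control is lost. These conditions can coexist only when $q^2\ll |\Hd|^{1/2}\asymp d^{1/4}$. A further delicate point is checking that the class-group action descends cleanly to $\Hdq$ via the map $\Pic(\Z[\sqrt{-d}])\to\SO_3(\Z/q\Z)$, and that the spectral gap above holds \emph{uniformly} in both $d$ and the level $q$ — which forces us to invoke Ramanujan--Petersson at all places, including those dividing $q$, rather than merely an average or Kim--Sarnak-type substitute.
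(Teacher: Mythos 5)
Your overall architecture (class-group action, Ramanujan-quality spectral gap via Jacquet--Langlands and Deligne, a large-deviation estimate on the resulting expander) is the right one and is essentially the paper's, but there is a genuine gap at the pivotal step. The sentence claiming to express $|\Hd|^{-1}|\Hdq|\,|\rmr_q^{-1}(\obfx)|$ ``as a matrix coefficient of $M^n$ between delta masses'' is not a valid identity: $|\rmr_q^{-1}(\obfx)|$ counts integer points of $\Hd$, and no power of a Hecke operator on the finite graph $\Hdq$ computes it. What is true, and what the paper uses, is the elementary identity that the average over $\x\in\Hd$ of the visit frequency of the truncated trajectory $\gamma_{\x}^{(\ell)}$ to a set $\Bscr\subset\Hdq$ equals $|\rmr_q^{-1}(\Bscr)|/|\Hd|$. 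The large-deviation estimate controls the proportion of \emph{paths} that over-visit $\Bscr$; to convert this into a statement about the proportion of \emph{points} $\x\in\Hd$, one must show that $\x\mapsto\gamma_{\x}^{(\ell)}$ is essentially injective, i.e., bound the number of pairs $(\x,\x')\in\Hd^2$ with identical truncated trajectories. You allude to this as a ``birthday threshold'' past which ``combinatorial control is lost,'' but you treat it only as a heuristic constraint on parameters rather than as a lemma to be proved. This is exactly where the two nontrivial arithmetic inputs of Linnik's method enter, both absent from your proposal: the shadowing lemma (equal trajectories of length $2\ell$ force $\x\equiv\pm\x'$ modulo $q5^{\ell}$, proved via geodesics in the Bruhat--Tits tree of $\PGL_2(\Q_5)$), and Linnik's basic lemma (the number of pairs in $\Hd^2$ with prescribed inner product $e$, $|e|<d$, is $O_{\eps}(d^{\eps})$ --- a bound on representations of binary forms by the ternary form). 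Together these give that the collision count is $O_{\eps}(d^{1/2+\eps})$ precisely when $q^2 5^{2\ell}\asymp|\Hd|$, which is where the exponent $1/4$ actually comes from; without them the spectral gap tells you where the walk goes but nothing about the multiplicities with which the actual set $\Hd$ hits residues.

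Two secondary points. First, the large-deviation estimate you need is for \emph{non-backtracking} walks, since the trajectories produced by the $[\mathfrak{p}]^{\Z}$-action are geodesics in the tree and never traverse the same edge twice in succession; the standard Chernoff-type statements do not apply directly, and one must pass to the arc graph and re-establish the spectral gap there. Second, your appeal to ``a positive proportion of small rational primes $\ell$ split in $\Q(\sqrt{-d})$'' is, for an individual $d$ and a fixed bound $L$, exactly the Landau--Siegel-type assertion that the hypothesis $d\equiv\pm 1\ (5)$ is designed to circumvent: you cannot guarantee any split prime below a fixed bound beyond the one supplied by hypothesis. Fortunately the single prime $5$ suffices for the whole argument, so the averaging over $\ell\le L$ should simply be dropped.
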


A qualitative consequence of this Theorem is that: 

\noindent {\em As long as
$q \leq d^{1/4-\delta}$, { almost any solution} to $\bar{x}^2 + \bar{y}^2 + \bar{z}^2 = d$
with $(\bar{x}, \bar{y}, \bar{z}) \in (\Z/q\Z)^3$, can be lifted to a solution $x^2+y^2+z^2 =d$, $(x,y,z)\in\Zz^3$}.

 Again, it is natural to surmise that this is true for {\em all} solutions; but this appears to be a very difficult problem. It is also interesting to consider numerics related to this issue.  
 
In this paper, we shall discuss the proof of Theorem \ref{mvQ}  in some detail; 
all the other results may be obtained by modifying that proof; see \S \ref{Extensions} for a discussion of how to carry this out.

\subsection{Plan of the paper}
In \S \ref{mvQproof} we describe the main steps of the proof of Theorems \ref{linnikinfty} and \ref{mvQ}; for simplicity we focus on the second theorem and briefly indicate the modifications necessary to handle the first. Each of these steps is then explained in detail in the remainder of the paper:
\begin{itemize}
  \item[--] Part 1 concerns those parts of the proof that are most conveniently presented in classical language:
  \begin{itemize}  
  \item[-] In \S \ref{torsorA}, we describe the natural action of the class group on the quotient $\SO_{3}(\Zz)\bash\Hd$ and make it rather explicit. For that purpose, it is particularly useful to express everything in terms of quaternions. Presentations of similar material can be found in \cite{Venkov,Venkov2} and \cite{Shemanske}; our approach is slightly different. 
  \item[-] In \S \ref{linniklemma}, we establish the important ``basic lemma'' of Linnik.
  \end{itemize}
  \item[--] Part 2  -- comprised of
   \S \ref{sec:adelicHdq},  \S \ref{sec:adelicHd},  and \S \ref{sec:adelicredq} 
   is concerned with transferring some of the key objects -- $\Hd, \Hd(q)$ --
  in terms of adelic quotients. This will be a key tool in proofs. 
  \item[--] Part 3 is concerned with the part of the proof that is related to expanders. 
  \begin{itemize}
  \item[-] In \S \ref{sec:Hdqexpander} we prove that $\Hd(q)$ -- endowed with a graph structure that we shall describe -- is an expander graph. 
  \item[-] In \S \ref{expander}, we recall some basic facts from the theory of random walks on expander graphs. In particular we give a self-contained proof of a {\em large deviation estimate} (for non-backtracking paths) on such graphs.
  \end{itemize}
  \item[--] Part 4 is concerned with miscellaneous extensions of the main topics. 
  \begin{itemize}
  \item[-] In \S \ref{Extensions}, we discuss various possible extensions of Linnik's ergodic method.
  \item[-] In \S \ref{harmonic}, we recall the alternative approach of Duke and others to these equidistribution problems, which is based on harmonic analysis and more precisely on the theory of automorphic forms. We also compare the two approaches.  
  \end{itemize}
\end{itemize}

\subsection{Acknowledgements}  
The ideas of this paper are based on those of Linnik, and, indeed, this paper should be
regarded in considerable part as an exposition of his work.
This paper also draws on the ideas in the work of the latter two authors
with M. Einsiedler and E. Lindenstrauss, as well as work of the first- and last- named author \cites{ELMV1,ELMV3,ELMV2,EV}. 
The novel results of the paper were obtained in 2005; we apologize for the delay in bringing them to print.

We also thank P. Sarnak for encouragement of the project, R. Masri for reading an early version and J.-P. Serre for his comments and criticism; finally special thanks are due to G. Harcos who read very carefully the whole manuscript and made numerous corrections and comments on that occasion.

\section{An overview of the ergodic method}\label{mvQproof}

We now present an overview of the proof of Theorem \ref{mvQ}.  We will try to isolate the main steps of the proof, each one of which contains some key results of a more general mathematical interest.  In the present section, we treat each of these results as a black box; in the latter part of the paper, we ``open the black boxes'' one by one and provide complete proofs.

\subsection{Assumptions and notation.}  \label{notn}
Throughout the paper we shall make the following assumptions: 
\begin{enumerate}
\item $d$ will always
denote a squarefree integer, not congruent to $7$ modulo $8$, and congruent to $\pm 1$ modulo $5$. 
\item $q$ will always denote an integer prime to $30$. 
\end{enumerate}

\begin{rem*} One could replace $5$ by an arbitrary fixed prime $p$ and the condition $d \equiv \pm 1$ modulo $5$
 by the assertion that {\em $p$ is split in $\Q(\sqrt{-d})$}. The assumption that $q$ is prime to $30$ is for convenience and could be removed entirely.  
 \end{rem*}
 
 Write $K=\Qq(\sqrt{-d})$; we denote by $\OO_{K}$ the ring of integers of $K$, and by $\Pic(\OO_{K})$ the ideal class group of $\OO_K$. We also fix a square root of $-d$ in $K$,  and denote it by $\sqrt{-d}$ without further commentary.

\subsection{Some natural quotients} The problems considered in the introduction admit ``obvious'' {\em symmetries}
owing to the evident action of $\SO_{3}(\Zz)$ on $\Hd$, $S^2$ or $\Hd(q)$. We denote the corresponding quotients by $$\tildeHd = \SO_3(\Zz)\bash \Hd,\ \widetilde{\Hd}(q)= \SO_3(\Zz)\bash \Hd(q),\  \widetilde{S^2}= \SO_3(\Zz)\bash S^2.$$ and denote by $\tilde\bfx$ the orbit $\SO_{3}(\Zz)\bfx$ of any element in the above sets.
 
  For the purposes of our main theorems, there is no essential difference between working with $\Hd$, $S^2$ or $\Hd(q)$  and working with $\tildeHd$, $\widetilde{S^2}$ or $\widetilde{\Hd}(q)$ (since $\SO_{3}(\Zz)$ is finite). It is often conceptually clearer to consider the question of how $\tildeHd$ becomes distributed in  $\widetilde{\Hd}(q)$ or 
$\widetilde{S^2}$ than the similar question for $\Hd$ and $S^2$ or $\Hd(q)$;
this becomes clear when considering these problems for other ternary quadratic forms, especially indefinite forms.
 However, the latter formulation being slightly more classical, we will make the (slight) extra effort required to state theorems on $\Hd$.

Recall that if $G$ is a group, a {\em homogeneous space} for $G$ is simply a set $X$ on which $G$ acts transitively (i.e. for any $x,x'\in X$, there is $g\in G$ such that $g.x=x'$). In such a case, the stabilizers of the elements of $X$ under this action are all conjugate.

\begin{prop}[$\tildeHd$ is a $\Pic(\OO_{K})$-homogeneous space]\label{torsor0}  Let $d>3$ be a squarefree integer not congruent to $7$ mod $8$. Then there exists
a natural action of $\Pic(\OO_K)$ on $\tildeHd$, making $\tildeHd$ into a homogeneous space for $\Pic(\OO_K)$. The stabilizer of any point is trivial if $d \equiv 3$ modulo $4$,
and the order $2$ subgroup generated by a prime above $2$ if $d\equiv 1,2$  modulo $4$.
 In particular,
$$|\Hd|=24|\Pic(\OO_{K})|\hbox{ when $d\equiv 3\ (8)$ }$$
and 
$$|\Hd| = 12|\Pic(\OO_{K})|\hbox{ when $d\equiv 1,2\ (4)$.}$$

\end{prop}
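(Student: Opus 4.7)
The plan is to reformulate everything via Hamilton's quaternion algebra $B=\bigl(\frac{-1,-1}{\Qq}\bigr)$ and its Hurwitz maximal order $\mathscr{O}$ (the Lipschitz order $\Zz\oplus\Zz i\oplus\Zz j\oplus\Zz k$ together with $\omega=(1+i+j+k)/2$). I would begin by identifying a point $\bfx=(x,y,z)\in\Hd$ with the pure quaternion $\alpha:=xi+yj+zk\in\mathscr{O}$; it satisfies $\alpha^2=-d$, so $\Zz[\alpha]$ is an order in $K=\Qq(\alpha)\cong\Qq(\sqrt{-d})$. The symmetry group $\SO_3(\Zz)$ is realised on pure quaternions by conjugation by elements of the normaliser $N_{B^\times}(\mathscr{O})$, so $\tildeHd$ becomes the set of such conjugacy classes. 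An algebraic fact I would establish at the outset is that $\mathscr{O}\cap K=\OO_K$ in every case we consider: when $d\equiv 1,2\pmod 4$ this is just $\Zz[\sqrt{-d}]$; when $d\equiv 3\pmod 8$ (which forces $x,y,z$ all odd), the element $(1+\alpha)/2$ is genuinely a Hurwitz integer of norm $(1+d)/4\in\Zz$ and generates $\OO_K$.

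Next I would define the $\Pic(\OO_K)$-action. Given $[\mathfrak{a}]$, view $\mathfrak{a}$ inside $\mathscr{O}$ via the embedding above and form the left $\mathscr{O}$-ideal $\mathscr{O}\mathfrak{a}$. A key classical fact I would invoke is that Hamilton's quaternions over $\Qq$ have class number one, so this left ideal is principal: $\mathscr{O}\mathfrak{a}=\mathscr{O}\beta$, and I set
\[
[\mathfrak{a}]\cdot\widetilde{\alpha}\;:=\;\widetilde{\beta^{-1}\alpha\beta}.
\]
Well-definedness reduces to two routine checks: replacing $\beta$ by $u\beta$ with $u\in\mathscr{O}^\times$ changes the conjugation by an element of $\SO_3(\Zz)$; replacing $\mathfrak{a}$ by $\lambda\mathfrak{a}$ with $\lambda\in K^\times$ multiplies $\beta$ on the right by $\lambda$, which commutes with $\alpha\in K$. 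Transitivity follows from Skolem--Noether (any two $B^\times$-embeddings $K\hookrightarrow B$ are inner-conjugate), the intertwining element yielding a left $\mathscr{O}$-ideal whose class in $\Pic(\OO_K)$, recovered via intersection with $K$, realises the required group element.

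The hard part will be the stabiliser computation, from which both the dichotomy and the cardinality formulas will fall out. The stabiliser of $\widetilde{\alpha}$ consists of $[\mathfrak{a}]$ for which $\beta^{-1}\alpha\beta=v^{-1}\alpha v$ with $v\in N_{B^\times}(\mathscr{O})$; since the $B^\times$-centraliser of $\alpha$ is $K^\times$, this forces $\beta=kv$ for some $k\in K^\times$, whence $\mathscr{O}\mathfrak{a}=\mathscr{O}kv$. The structural input here is that $B$ ramifies precisely at $\{2,\infty\}$, so $N_{B^\times}(\mathscr{O})/\Qq^\times\mathscr{O}^\times$ is cyclic of order two, generated by a uniformiser $\pi$ of the unique maximal two-sided $\mathscr{O}$-ideal $\mathscr{P}_2$ over $2$. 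The contribution with $v\in\mathscr{O}^\times$ is trivial: intersecting $\mathscr{O}k=\mathscr{O}\mathfrak{a}$ with $K$ and invoking $\mathscr{O}\cap K=\OO_K$ yields $\mathfrak{a}=(k)$ principal. The contribution with $v\in\pi\mathscr{O}^\times$ reduces, by the same intersection, to $\mathfrak{a}=k\cdot(\mathscr{P}_2\cap K)$; a local computation at $2$ shows $\mathscr{P}_2\cap K=2\OO_K$ is principal when $2$ is inert in $K$ (i.e.\ $d\equiv 3\pmod 8$), yielding trivial stabiliser; whereas when $2$ ramifies in $K$ (i.e.\ $d\equiv 1,2\pmod 4$), $\mathscr{P}_2\cap K=\mathfrak{p}_2$ is the prime above $2$, whose class has order two, producing the stated stabiliser. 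The cardinality formulas then follow from $|\Hd|=|\SO_3(\Zz)|\cdot|\tildeHd|=24\,|\Pic(\OO_K)|/|\mathrm{Stab}|$ after verifying that $\SO_3(\Zz)$ acts freely on $\Hd$ for squarefree $d>3$.
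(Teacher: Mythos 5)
Your strategy is essentially the paper's: identify points with pure Hurwitz quaternions, use the Euclidean (class number one) property of $\mathscr{O}=\B(\Zz)$ to turn an $\OO_K$-ideal into a principal one-sided $\mathscr{O}$-ideal, define the action by conjugation by its generator, and settle the stabilizer by a local analysis at $2$. The only packaging difference is that you compute the stabilizer directly on $\tildeHd$ via $N_{B^\times}(\mathscr{O})/\Qq^\times\mathscr{O}^\times\cong\Zz/2\Zz$, whereas the paper first exhibits a torsor structure on the intermediate quotient $\Hdstar$ (by $\SO_3(\Zz)^+$, the image of $\mathscr{O}^\times$) and then descends; the two are equivalent, and yours is marginally more direct for the statement as given.

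One step, as written, fails: with the \emph{left}-ideal normalisation $\mathscr{O}\mathfrak{a}=\mathscr{O}\beta$ the correct formula is $[\mathfrak{a}]\cdot\widetilde{\alpha}=\widetilde{\beta\alpha\beta^{-1}}$, not $\widetilde{\beta^{-1}\alpha\beta}$. Locally $\beta_p=u_p\gamma_p$ with $u_p\in\mathscr{O}_p^\times$ and $\gamma_p\in K_p$ a generator of $\mathfrak{a}_p$, so $\beta\alpha\beta^{-1}=u_p\gamma_p\alpha\gamma_p^{-1}u_p^{-1}=u_p\alpha u_p^{-1}\in\mathscr{O}_p$, whereas $\beta^{-1}\alpha\beta=\gamma_p^{-1}(u_p^{-1}\alpha u_p)\gamma_p$ need not be integral, since $\gamma_p$ does not commute with $u_p^{-1}\alpha u_p$. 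Concretely, for $B_p\cong M_2(\Qq_p)$, $\alpha=\mat{0}{1}{-d}{0}$ and $\beta=\mat{-s}{1}{0}{p}$ generating $\mathscr{O}_p\mathfrak{p}_p$ with $s^2\equiv -d\ (p)$, the matrix $\beta^{-1}\alpha\beta$ has entries with denominator $p$. Your formula becomes correct if you instead use the right ideal $\mathfrak{a}\mathscr{O}=\beta\mathscr{O}$; either fix only replaces the action by its inverse, so nothing downstream is affected (rerun the well-definedness check in the corrected direction: replacing $\beta$ by $u\beta$ then visibly conjugates the output by $u$). Two points to make explicit when writing this up: the identity $\mathscr{O}\mathfrak{a}\cap K=\mathfrak{a}$ you use to read off ideal classes (it follows from the same local description together with $\mathscr{O}\cap K=\OO_K$), and the fact that for $d\equiv 1,2\ (4)$ with $d>3$ the ramified prime above $2$ is genuinely non-principal (as $a^2+db^2=2$ has no integral solution), which is where the hypothesis $d>3$ enters the stabilizer claim alongside the freeness of the $\SO_3(\Zz)$-action on $\Hd$.
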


We prove a slightly more precise version of this statement in \S \ref{quaternions} and explain its adelic manifestation in \S \ref{classgroupadelicsection}. 

Given $\mfa\subset \OO_{K}$ an ideal, we denote by $[\mfa].\tilde\bfx$ the action of its corresponding ideal class on some element $\tilde\bfx\in\tildeHd$.

\subsection{The $\Pic(\OO_{K})$-action} \label{explication5}
While it is possible to describe explicitly the action of $\Pic(\OO_{K})$ on $\tildeHd$ (or at least the action of
the prime ideal classes),  for the proof of the main results we will only need the action of the primes above $5$.
Since $d\equiv \pm 1(5)$, the prime $5$ splits in $K$, which is to say that the principal ideal $5\OO_{K}$ factors  into a product of two prime ideals
$$5\OO_{K}=\mfp.\mfp'.$$ 
We shall now realize explicitly the action of $[\mfp]$ and the group it generates.

Let $A, B, C$ be, respectively, rotations by angles $\acos(-4/5)$ around the $x,y,z$ axes.
They are described by the matrices:

$$ A =  \frac{1}{5} \left( \begin{array}{ccc} 5 & 0 & 0  \\ 0 & -4 & 3 \\ 0 & -3 & -4  \end{array} \right),
B =\frac{1}{5} \left( \begin{array}{ccc} -4 & 0 & 3  \\  0 & 5 & 0 \\  -3 & 0 &-4  \end{array} \right),
C = \frac{1}{5}\left( \begin{array}{ccc}  -4 & -3 & 0 \\ 3 & -4 & 0 \\ 0 & 0 & 5 \end{array} \right).
 $$

\begin{prop}[The action of a prime ideal]  \label{lem:Linnik1} For $\x \in \Hd$, exactly two of
 $$\{A \x, A^{-1} \x, B \x, B^{-1} \x, C \x, C^{-1} \x\}$$ belong 
to $\Hd$.  The classes of those two points in $\tildeHd$ are
$[\mathfrak{p}] . \tilde\x$ and $[\mathfrak{p}']. \tilde\x=[\mathfrak{p}]^{-1} . \tilde\x$. 
\end{prop}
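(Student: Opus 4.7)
The proof splits naturally into two parts: a direct combinatorial verification that, for any $\x\in\Hd$, exactly two of the six rotations lie in $\Hd$, and an identification of those two with the class-group actions of $[\mathfrak{p}]$ and $[\mathfrak{p}']$.

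For the combinatorial part, I reduce $A^{\pm 1}, B^{\pm 1}, C^{\pm 1}$ modulo $5$ and extract the integrality condition for each. Since $A$ has entries in $\tfrac{1}{5}\Z$, the condition $A\x \in \Z^3$ amounts to a mod-$5$ congruence on $\x = (\alpha, \beta, \gamma)$; both nontrivial rows of $A$ collapse to the single requirement $\beta \equiv 2\gamma \pmod 5$. Analogously, $A^{-1}$ requires $\beta \equiv -2\gamma \pmod 5$, and $B^{\pm 1}, C^{\pm 1}$ impose the analogous conditions on $(\alpha,\gamma)$ and $(\alpha,\beta)$. Because $4\equiv-1\pmod 5$, the disjunction ``$A$ or $A^{-1}$ preserves integrality'' is equivalent to $\beta^2+\gamma^2\equiv 0\pmod 5$, with analogous statements for the other two pairs. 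A short case analysis by the number $N$ of coordinates of $\x$ vanishing mod $5$ now finishes part (a): $N=3$ forces $d\equiv 0$, excluded; $N=1$ forces $d\in\{0,\pm 2\}\pmod 5$, again excluded; $N=2$ yields exactly the two rotations of a single pair, and $N=0$ (combined with $d\equiv\pm 1$) forces exactly two of the three pairwise squared sums $\alpha^2+\beta^2,\alpha^2+\gamma^2,\beta^2+\gamma^2$ to vanish mod $5$, each contributing one preserving rotation.

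For the identification part, I invoke the quaternion framework of \S\ref{torsorA}: $\tildeHd$ is identified with $\SO_3(\Z)$-orbits of integral pure-imaginary quaternions $\xi$ of norm $d$ in a suitable order $\mathcal{O}$ of the rational Hamilton quaternion algebra, and $\Pic(\OO_K)$ acts on $\tildeHd$ via optimal embeddings $\OO_K\hookrightarrow\mathcal{O}$ determined by $\xi$. The prime $\mathfrak{p}$ above $5$ pulls back, under such an embedding, to a left $\mathcal{O}$-ideal of reduced norm $5$; moving $\xi$ along this ideal implements $[\mathfrak{p}]\tilde{\x}$. Unwinding the correspondence of \S\ref{torsorA} identifies this operation with one of the six rotations $A^{\pm 1}, B^{\pm 1}, C^{\pm 1}$, necessarily one that preserves $\Z^3$ at $\x$, and fixed by the image of $\sqrt{-d}$ in $\mathcal{O}/5\mathcal{O}$. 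Swapping $\mathfrak{p}$ for $\mathfrak{p}'=\bar{\mathfrak{p}}$ produces the complex-conjugate ideal together with the other integrality-preserving rotation, which therefore realizes $[\mathfrak{p}']\tilde{\x}=[\mathfrak{p}]^{-1}\tilde{\x}$.

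The main obstacle is this second part: while (a) is a bare mod-$5$ calculation, matching the abstract Picard action to the six explicit rotations requires a careful unwinding of the quaternion model from \S\ref{torsorA}, including choosing a norm-$5$ representative for the $\mathfrak{p}$-class and pairing it with a specific rotation from the list. This pairing is not literal quaternion conjugation by a norm-$5$ Lipschitz element (the latter would give rotations of angle $\arccos(\pm 3/5)$ rather than $\arccos(-4/5)$), so one has to keep track of unit-group normalizations, sign conventions, and the precise choice of embedding. Once the dictionary is established, the matching with $[\mathfrak{p}]$ and $[\mathfrak{p}']$ follows from the identity $[\mathfrak{p}][\mathfrak{p}']=1$ together with the combinatorial count of two preserving rotations from (a).
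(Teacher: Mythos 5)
Your proposal is correct and follows essentially the same route as the paper: part (a) is the paper's ``direct computation on $\Hd(5)$'' (your congruences $\beta\equiv\pm 2\gamma\ (\mathrm{mod}\ 5)$ and the case analysis on the number of vanishing coordinates is just a structured way of carrying out that finite check), and part (b) is exactly the content of \S\ref{ss:explicitaction}, where the action of $[\mathfrak{p}]$ is realized by conjugation by the norm-$10$ quaternions $1\pm 3i,1\pm 3j,1\pm 3k$ after the correction by $(1+i)$ that you correctly flag. The only caveat is that your part (b) remains a sketch of that dictionary rather than a worked computation, but the paper's own proof likewise defers those details to \S\ref{ss:explicitaction}.
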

\proof Since multiplication of an element of $\Hd$ by either of the matrices $A,B,C$ or their inverse produce vector with rational coordinates with denominator equal to $1$ or $5$, the first part of the proposition can be verified by direct computation on the set $\Hd(5)$, which is to say, the set of solutions to $x^2 + y^2 + z^2 = d$ in $(\Z/5\Z)^3$; one just has to check that for each $\bar{\bfx}$ in $\Hd(5)$, there are exactly two choices of $M \in \{5A,5A^{-1},5B,5B^{-1},5C,5C^{-1}\}$ satisfying $M\obfx = 0$. The second part will be proved in \S \ref{ss:explicitaction}. 
\qed

Note that Proposition \ref{lem:Linnik1} in fact defines a {\em lifting}
of the action of $[\mathfrak{p}]^{\Zz}$ from $\tildeHd$  to $\Hd$.  This lifting
is, in fact, rather less canonical than the $\Pic(\OO_K)$-action on $\tildeHd$. 

\subsection{Trajectories}\label{primeaction}
We can use Proposition \ref{lem:Linnik1} to determine distinguished trajectories in $\Hd$.  
Write  $$\mcA_{5}:=\{A, A^{-1}, B, B^{-1}, C, C^{-1}\}.$$
To start with, pick any $\x\in\Hd$; now by  Lemma \ref{lem:Linnik1}, there are precisely two matrices in $\mcA_{5}$ -- say $w_{1},\ w_{0}$   -- such that $w_1 \bfx$ and $w_0^{-1} \bfx$ both belong to $\Hd$.  Denote $w_1 \bfx$ by $\bfx_1$, and $w_0^{-1}\bfx$ by $\bfx_{-1}$.  Similarly, there is a unique choice of matrix $w_2 \in \mcA_5 $ such that $w_2 \neq (w_1)^{-1}$ and $w_2 \bfx_1$ belongs to $\Hd$; we denote $w_2 \bfx_1$ by $\bfx_2$.  In this way, repeated application of Lemma~\ref{lem:Linnik1} gives rise to a sequence $(\x_{i})_{i\in\Zz}$ in $\Hd$ such that
$\x_0 = \x$ and for any $i$, $$\x_{i}\in\Hd,\ \x_{i+1} \in \{A \x_i, A^{-1} \x_i, B \x_i, B^{-1} \x_i, C \x_i, C^{-1} \x_i\}-\{\x_{i-1}\}.$$
Alternatively, this string can be represented by the data of $\bfx=\bfx_{0}$ and an infinite (necessarily periodic) word $W_{\x}=(w_{i})_{i\in\Zz},$ in the alphabet $\mcA_{5}$,  satisfying, for any $i$,
\begin{equation}\label{pathproperty}
w_{i+1}\not=(w_{i})^{-1},\ \x_{i}=w_{i}\x_{i-1} \in \Hd.
\end{equation}
A word satisfying the condition $w_{i+1}\not=(w_{i})^{-1}$ is called {\em reduced}; it is easy to see that $W_{\x}$ is the unique reduced word satisfying \eqref{pathproperty}, up to the ``switch directions'' transformation given by switching $w_i$ and $w_{1-i}^{-1}$, or, equivalently, switching $\x_i$ and $\x_{-i}$.  We refer to the sequence $(\x_i)_{i \in \Z}$ as the {\em trajectory} of $\x$.

The equivalence of this trajectory to the one defined by the action of $[\mfp]^\Z$ is explained in \S \ref{ss:picokacts}.

\subsection{An example}

For the sake of concreteness, we include an explicit example.  Take $d=101$.  In this case, $|\Hd| = 168$ and $|\Pic(\OO_{K})|=14$.
The points of $|\Hd|$, up to the action of $\SO_3(\Z)$, are:
$$(10, 1,0), \pm (9, 4, 2), \pm (8, 6,1), \pm (7, 6,4)$$
The {\em trajectory} containing $(10,1,0)$ is:
\begin{gather*} (10,1,0) \stackrel{B}{\rightarrow} (-8,1,-6) \stackrel{C^{-1}}{\rightarrow} (7,4,-6) \stackrel{B^{-1}}{\rightarrow} 
(-2, 4,9) \stackrel{C^{-1}}{\rightarrow}  \\
 (4,-2,9) \stackrel{A}{\rightarrow} (4, 7,-6) \stackrel{C^{-1}}{\rightarrow} 
(1,-8,-6) \stackrel{A^{-1}}{\rightarrow} (1,10,0) \ldots
 \end{gather*}
$$W_{(10,1,0)}=\dots {B}^\star C^{-1}B^{-1}C^{-1}AC^{-1}A^{-1}\dots\\
$$
with ${B}^\star=w_{1}$.

 We note that after seven steps (and thus, after any multiple of seven steps) the trajectory returns to the $\SO_3(\Z)$-orbit of $(10,1,0)$.  This periodicity of order $7$ reflects the fact that the class of
  a prime ideal above $5$ has order $7$ in the class group of $\Q(\sqrt{-101})$, which is cyclic of order $14$. 
 \label{example}

\subsection{The lengths of the trajectories} We should emphasize that it is certainly possible, and in some sense probably typical, for $[\mfp]^\Z$ to be all or most of $\Pic(\OO_K)$.

What we know in this direction is rather minimal. On the one hand, it is easy to see that  $$|[\mfp]^{\Zz}|\gg \log(d);$$ while this lower bound goes to $\infty$ with $d$, it remains quite small compared with the size of $|\Pic(\OO_{K})|=d^{1/2+o(1)}$. 
 As far as we know, it is unknown whether there exist infinitely many squarefree $d\equiv 1(20)$ such that $|[\mfp]^{\Zz}|$ is greater than some positive power of $d$.
 
These considerations are, of course, related to the question:

{\em for which $d$ does the process described in Proposition~\ref{lem:Linnik1}\linebreak traverse all points in $\tildeHd$?}

 \noindent This condition on $d$ turns out to be equivalent to the condition that the prime ideal $\mathfrak{p}$ above $5$ and the $2$-torsion ideal class above $2$ generate the ideal class group $\Q(\sqrt{-d})$.  The question of {\em whether this happens for infinitely many $d$} is an interesting and seemingly difficult one; heuristically it is reasonable to suppose
that this happens a positive fraction of the time.  It is analogous to Gauss's famous question: 

{\em do there exist infinitely many $d > 0$ for which $\Q(\sqrt{d})$ has class number $1$?}

\noindent The prime above $5$ plays the role in our situation that a real place does in Gauss's problem;  the {\em period} of the process above is analogous to the regulator of a real quadratic field.

  \begin{figure}
\centering
\resizebox{!}{5.5cm}{
\includegraphics{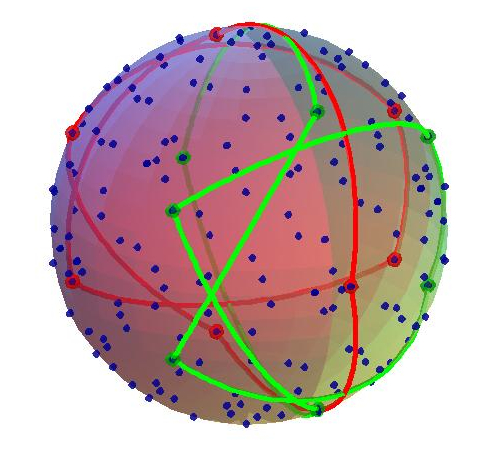}
}
\caption{The trajectories of $(10,1,0)$ (red) and $(6,1,8)$ (green) within $\Hd$}
\end{figure}

\subsection{Trajectories on the graph $\Hd(q)$} \label{HdqExpanderGraph}
We shall now begin to examine solutions to $x^2+y^2+z^2=d$ modulo a
positive integer $q$, as in Theorem \ref{linnikQ}. We shall suppose, as in that theorem,
that $q$ is relatively prime to $30$. 

Under this assumption, the matrices $\mcA_{5}=\{A, A^{-1}, B, B^{-1}, C, C^{-1}\}$
define elements in $\SO_3(\Z/q\Z)$ and act on $\Hd(q)$;  this endows $\Hd(q)$ with a structure of a $6$-regular (undirected)
\IGNORE{\footnote{``undirected'' means precisely that in this construction, we consider as the same
  the edge the link between (say) $v$ to $v'=A.v$ and the link between $v'$ to $v=A^{-1}v'$.}}
   graph  by joining each $ \ov \bfx\in\Hd(q)$ to $$A \ov  \bfx,\ A^{-1}\ov   \bfx,\ B\ov   \bfx ,\ B^{-1} \ov  \bfx,\ C  \ov \bfx,\ C^{-1}  \ov \bfx.$$
     By an abuse of notation we shall also refer to this graph as $\Gd(q).$  More precisely, it is a {\em multigraph} -- we allow multiple edges between pairs of vertices, and edges joining a vertex to itself.

  Now to each point $\x \in \Hd$, we may associate a well-defined {\em marked path}
  on the graph $\Hd(q)$ in the following way. Given $\x \in \Hd$, we constructed in  \S \ref{primeaction} a sequence $(\x_i)_{i \in \Z}$ of points in $\Hd$, the {\em trajectory} of $\x$, which is well-defined up to the substitution $\x_i \mapsto \x_{-i}$.  We denote by $\gamma_{\bfx}$ the reduction of this trajectory to $\Hd(q)$; the data of $\gamma_{\bfx}$ is the sequence of vertices $(\obfx_{i})_{i\in\Zz}$ of $\Hd(q)$ ($\obfx_{i}=\rmr_{q}(\bfx_{i})$), together with a marked basepoint $\obfx_0$ and a choice, for each $i$, of an edge joining $\obfx_i$ to $\obfx_{i+1}$. We denote by $\gamma_\bfx^{(\ell)}$
 the segment of this path of length $2 \ell$, centered at the marked vertex. In other words, 
 $$\gamma_{\bfx}^{\ell}  = (\x_{-\ell}, \x_{-\ell+1}, \dots, \x_{\ell-1}, \x_{\ell}).$$ We may refer to it as the {\em truncated}
  trajectory of length $2\ell$.

 Note that the trajectories arising as $\gamma_\bfx$ are not completely arbitrary paths on $\Hd(q)$; the condition $w_{i+1} \neq w_i^{-1}$ from \refs{pathproperty} implies that $\gamma_\bfx$ never traverses the same edge twice in succession.  (This does not, of course, forbid $\gamma_\bfx$ from traveling from $\ov x$ to $\ov y$ and then back to $\ov x$; it just has to use two distinct edges joining $\ov x$ and $\ov y$.)  A {\em non-backtracking path} in $\Hd(q)$ is a (marked) path which never traverses the same edge twice in succession; equivalently, it can be defined by the data of a marked point $\bar{x}_0$, and a word $W_{\x}=(w_{i})_{i\in\Zz}$ satisfying $w_{i+1} \neq w_i^{-1}$; the vertices $\bar{x}_i$ can be determined inductively by the rule that $\ov x_{i+1}$ is the vertex arrived at by following the edge labeled $w_{i+1}$ from $\ov x_i$.

 For instance, if $d = 101, q=7, P= (7,4,-6)$, we have (see \S \ref{example}) 
\begin{gather*}
 \gamma^{(2)}_P \ : \cdots\ [(3,1,0)] \stackrel{B}\rightarrow [(-1,1,1) ]  \stackrel{C^{-1}}\rightarrow
  [(0,4,1)]^\star \\ \stackrel{B^{-1}}\rightarrow 
  [(-2,4,2)]
   \stackrel{C^{-1}}\rightarrow  [(4,-2,2)] \cdots
\end{gather*}
 Here $\star$ denotes the marked vertex, and we have used square brackets $[\dots]$ to denote
 reduction modulo $7$.

\IGNORE{ We have constructed on the graph $\Hd$ a collection of non-backtracking marked paths,
 the $q$-trajectories associated with the $\bfx\in\Hd$. These are $d^{1/2+o(1)}$ periodic paths whose periods are  $\gg \log d$.
  We will show that these paths behave like random marked non-backtracking walks as $d\ra+\infty$;
   that is walks on $\Hd$ obtained by choosing randomly an origin $\bfx_{0}$ then choosing randomly 
   an edge $w^+_{i}\in \mcA_{5}$ originating from this vertice; this determine a new vertice $\x_{1}=w^+_{1}\x_{0}$
   and we now choose $w^+_{2}$ randomly in $\mcA_{5}-\{(w^+_{1})^{-1}\}$ by iterating the above process we obtain
    a random forward non-backtracking walk; likewise, we need to repeat this operation to define
    the backward walk, by choosing randomly an edge $w^-_{1}\in \mcA_{5}-\{w^+_{1}\}$
     which determines $\x_{-1}=w^-_{1}\x_{0}$ etc...}

\begin{prop}[Shadowing lemma] \label{Naddles} The sequences $W_{\mathbf{x}} $ and
$W_{\mathbf{x}'}$ coincide for $-\ell+1 \leq i \leq \ell$ 
if and only if $\x \equiv \pm \x'$ modulo
$5^{\ell}$.  In particular, 
$\gamma_\x^{(\ell)} = \gamma_{\x'}^{(\ell)}$ only if $\x \equiv \pm \x'$ modulo $q 5^{\ell}$. \end{prop}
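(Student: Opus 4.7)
My plan is to prove the ``iff'' for words and then combine it with the basepoint congruence to get the ``only if'' for $\gamma^{(\ell)}$. Write $W^+_\ell := w_\ell\cdots w_1$ and $W^-_\ell := w_{-\ell+1}^{-1}\cdots w_0^{-1}$, so that $W^{\pm}_\ell\in\SO_3(\Z[1/5])$ has $5$-adic denominator $5^\ell$ and $\x_{\pm\ell}=W^{\pm}_\ell\x$.

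The ``$\Leftarrow$'' direction is straightforward: replacing $\x'$ by $\pm\x'$ (whose trajectory word is identical) reduces to the case $\x\equiv\x'\pmod{5^\ell}$. By Proposition \ref{lem:Linnik1} the pair of allowed matrices at a point depends only on its mod-$5$ reduction, so after matching directions (making $w_0=w_0'$ and $w_1=w_1'$), one observes inductively that $\x_i-\x'_i=W^+_i(\x-\x')\in 5^{\ell-i}\Z^3$; thus $\x_i\equiv\x'_i\pmod 5$ for all $|i|\le\ell-1$, the allowed pairs at successive points agree, and the non-backtracking rule $w_{i+1}\ne w_i^{-1}$ uniquely determines each $w_{i+1}$ from the pair at $\x_i$, propagating $w_i=w_i'$ throughout $-\ell+1\le i\le\ell$.

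The reverse direction is the crux. For $\delta=\x-\x'$, word agreement gives $W^{\pm}_\ell\delta\in\Z^3$, so $\delta$ lies in $L^{\pm}:=(W^{\pm}_\ell)^{-1}(\Z_5^3)\cap\Z_5^3$. The Cartan decomposition of $\SO_3(\Q_5)\cong\PGL_2(\Q_5)$---equivalently, the observation that the non-backtracking word traces a geodesic ray of length $\ell$ in the Bruhat-Tits tree---places $W^{\pm}_\ell\in K\cdot\mathrm{diag}(5^\ell,1,5^{-\ell})\cdot K$, so $L^{\pm}$ has index $5^\ell$ in $\Z_5^3$ and is cut out by a single congruence $\langle v,n^{\pm}\rangle\equiv 0\pmod{5^\ell}$, where $n^\pm\in\Z_5^3$ is primitive, isotropic for $Q$, and whose reduction mod $5$ corresponds (via the bijection between the six neighbors of the base vertex and the elements of $\mcA_5$) to the isotropic direction that Proposition \ref{lem:Linnik1} attaches to $w_1$, respectively to $w_0^{-1}$. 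The non-backtracking condition $w_1\ne w_0^{-1}$ then makes $n^+\bmod 5$ and $n^-\bmod 5$ linearly independent, so $L^+\cap L^-$ is a cyclic $\Z/5^\ell$-module of order $5^\ell$. Since $\x\in L^+\cap L^-$ (as $W^{\pm}_\ell\x=\x_{\pm\ell}\in\Z^3$) and $\x$ is primitive mod $5$ (because $Q(\x)=d\not\equiv 0\pmod 5$), the class of $\x$ generates this module, and $\delta\equiv t\x\pmod{5^\ell}$ for some $t\in\Z/5^\ell$. Substituting into the identity $Q(\delta)=2\langle\x,\delta\rangle$ (which encodes $Q(\x)=Q(\x')$) yields $t(t-2)d\equiv 0\pmod{5^\ell}$; since $d$ is a $5$-adic unit and $\gcd(t,t-2)\mid 2$, one of $t,\,t-2$ must be divisible by $5^\ell$, giving $\x'\equiv\pm\x\pmod{5^\ell}$.

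For the ``in particular'' statement, equality of the marked paths yields both word agreement ($\x\equiv\epsilon\x'\pmod{5^\ell}$ with $\epsilon\in\{\pm 1\}$) and basepoint agreement ($\x\equiv\x'\pmod q$). The Chinese remainder theorem gives $\x\equiv\x'\pmod{q5^\ell}$ when $\epsilon=+1$; when $\epsilon=-1$ and $q>1$, achieving $\x\equiv\pm\x'\pmod{q5^\ell}$ would force $\x'\equiv 0$ modulo some prime $p\mid q5^\ell$, whence $p^2\mid d$, contradicting squarefreeness. The main obstacle is the Bruhat-Tits tree input identifying the sublattices $L^\pm$ and pinning $n^{\pm}\bmod 5$ down to the first edge; everything else is propagation, Chinese remainders, and the sphere equation.
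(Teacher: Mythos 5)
Your proof of the main equivalence (the words coincide on $[-\ell+1,\ell]$ if and only if $\x\equiv\pm\x'\pmod{5^{\ell}}$) is correct and follows essentially the same route as the paper's \S\ref{Naddlesproof}: pass to the Bruhat--Tits tree of $\SO_3(\Q_5)$, note that word agreement forces the two geodesics to coincide from time $-\ell$ to $\ell$, identify the intersection of the two extreme lattices as the index-$5^{2\ell}$ sublattice which is the preimage of the line spanned by $\x$ modulo $5^{\ell}$, and use $Q(\x)=Q(\x')=d$ with $d$ a $5$-adic unit to pin the proportionality constant to $\pm1$. Your version is merely more explicit (Cartan decomposition, the congruences $\langle v,n^{\pm}\rangle\equiv 0$, the $t(t-2)d\equiv 0$ computation), and you also write out the easy direction, which the paper omits; all of this checks out.

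The final paragraph, however, does not prove the ``in particular'' statement; it proves something close to its negation. You correctly extract from path equality the two congruences $\x\equiv\x'\pmod q$ and $\x\equiv\epsilon\x'\pmod{5^{\ell}}$, and you correctly observe that when $\epsilon=-1$ (strictly) the single-sign conclusion $\x\equiv\pm\x'\pmod{q5^{\ell}}$ would force $\x'\equiv 0$ modulo $q$ or modulo $5^{\ell}$ and hence a square factor in $d$. But that argument shows the desired conclusion is \emph{incompatible} with $\epsilon=-1$; to finish, you would instead need to show that $\epsilon=-1$ cannot occur when the marked paths agree, and nothing in your argument rules it out: $\x\equiv\x'\pmod q$ and $\x\equiv-\x'\pmod{5^{\ell}}$ are congruences at coprime moduli and are jointly satisfiable. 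What is actually available -- and all that the application in Proposition \ref{meansquare} requires -- is the pair of congruences with possibly different signs at $q$ and at $5^{\ell}$; this still confines $\x\cdot\x'$ to a bounded number of residue classes modulo $q^{2}5^{2\ell}$, so the count $\Sigma(d,\ell,q)$ is unaffected up to a constant. You should either state the conclusion in that weaker sign-per-place form, or record the two congruences separately rather than asserting a single sign modulo $q5^{\ell}$.
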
 
This proposition states roughly that if two points have their trajectory agree for a long time then their initial points have to be $5$-adically close as well as congruent modulo $q$. 
 (  In particular, the trajectories  $\gamma_{\x}$, $\gamma_{\x'}$ are equal if and only if $\x=\pm\x'$.)

The ``if'' part is easy; the ''only if'' is not much harder, although this is less apparent from our definitions.  Proposition \ref{Naddles} is proved in \S \ref{Naddlesproof}, using geometric properties of the Bruhat-Tits building of   $\SO_{3}(\Qq_{5})\simeq\PGL_{2}(\Qq_{5})$.

Our aim is to show that the integral points $\Hd$
and their associated trajectories $\{\gamma_{\x}, \x\in\Hd\}$ are well distributed, in some sense, on  the graph $\Hd(q)$. (In interpreting this statement, one should imagine that $q$ is fixed, or not increasing too quickly, whereas $d \rightarrow \infty$.)   To obtain this, 
we shall use Proposition \ref{Naddles} in conjunction with:

\begin{prop}[Linnik's basic Lemma] \label{BasicLemma}
Let $e\in\Zz$ such that $|e|< d$. The number of pairs $(\x_1, \x_2) \in \Hd^2$ with dot product $\x_1 \cdot \x_2 = e$
 is $\ll_{\varepsilon} d^{\varepsilon}$. 
\end{prop}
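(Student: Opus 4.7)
My plan is to reduce the count $N(d,e)$ to a count of rank-$2$ sublattices of $\Z^3$ with prescribed Gram matrix, and then estimate that count using Gauss's classical correspondence between ternary representations and binary quadratic form classes.

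\textbf{Lattice reformulation.} Each pair $(\x_1, \x_2) \in \Hd^2$ with $\x_1 \cdot \x_2 = e$ generates the rank-$2$ sublattice $L := \Z\x_1 + \Z\x_2 \subset \Z^3$, whose Gram matrix in this basis is
\[
G := \begin{pmatrix} d & e \\ e & d \end{pmatrix}, \qquad \det G = d^2 - e^2 > 0.
\]
The number of ordered $\Z$-bases of a fixed $L$ realising the Gram matrix $G$ equals $|\Aut_{\GL_2(\Z)}(G)| \leq 12$, so up to a bounded factor $N(d,e)$ counts the sublattices $L\subset \Z^3$ with Gram form $G$. I would now write $M := (L \otimes \Q) \cap \Z^3$ for the saturation of $L$, with $s := [M:L]$; then $s^2 \mid d^2-e^2$ and $M$ has Gram form $G_M$ of determinant $D' := (d^2-e^2)/s^2$.

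\textbf{Counting saturated sublattices.} A saturated rank-$2$ sublattice $M$ is determined by the primitive generator $\x_3$ of its orthogonal complement, which satisfies $|\x_3|^2 = D'$. By Gauss's classical theorem, the primitive integer solutions to $u^2+v^2+w^2 = D'$, modulo $\SO_3(\Z)$, are in correspondence with the $\SL_2(\Z)$-classes of primitive positive-definite binary forms of discriminant $-4D'$, the class being read off from the form induced on $\x_3^\perp \cap \Z^3$. Consequently, for each prescribed binary form class there are $O(1)$ choices of saturated $M$.

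\textbf{Fixing the class of $G_M$, and sublattices within $M$.} By Gauss composition, $G$ corresponds to a proper ideal class in the order $\mathcal{O}_s := \Z + s\OK$ of conductor $s$ inside $K := \Q(\sqrt{-D'})$; the natural extension map $\Pic(\mathcal{O}_s) \to \Pic(\OK)$ sends $[G]$ to the class of $G_M$, pinning $G_M$ down uniquely from $G$ and $s$. For each such $M$, the index-$s$ sublattices $L \subset M$ whose Gram form is $G$ correspond (via the inverse direction of composition) to integral $\OK$-ideals of norm $s$ in the appropriate fibre of this extension, bounded by $\sigma_0(s) \ll_\varepsilon s^\varepsilon$ by the divisor bound.

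\textbf{Summation and main obstacle.} Summing over admissible $s$ with $s^2 \mid d^2-e^2$ (an $O_\varepsilon(d^\varepsilon)$-size set, again by the divisor bound), I would obtain
\[
N(d,e) \;\ll\; \sum_{s^2\mid d^2-e^2} O(1) \cdot O_\varepsilon(s^\varepsilon) \;\ll_\varepsilon\; d^\varepsilon.
\]
The main technical obstacle is carrying out Gauss's correspondence and composition carefully in the non-maximal orders $\mathcal{O}_s$ of conductor $s$, and in handling possible non-primitivity of $G$ when $\gcd(d,e) > 1$; in all cases the deviations from the generic setting contribute only divisor-function factors that are absorbed by $d^\varepsilon$.
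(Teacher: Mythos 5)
Your overall strategy (pass to the orthogonal complement $\x_3$ of the plane $\Z\x_1+\Z\x_2$, and control the count via Gauss's correspondence between $\x_3^\perp$ and binary form classes) is exactly the second proof sketched in the paper; the new content in your write-up is the conductor bookkeeping for non-saturated $L$, which the paper avoids by restricting its sketch to the case of squarefree discriminant and otherwise citing Pall's formula. But there is one genuinely false step, and it sits at the arithmetic heart of the lemma. You assert that the primitive solutions of $u^2+v^2+w^2=D'$ modulo $\SO_3(\Zz)$ are \emph{in correspondence with} the classes of binary forms of discriminant $-4D'$, and conclude that each prescribed class has $O(1)$ preimages $M$. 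The orthogonal-complement map is neither injective nor surjective: its image is a single coset of $2\Pic(\OO_{K'})$ (where $K'=\Qq(\sqrt{-D'})$), and each nonempty fibre has cardinality equal to $|\Pic(\OO_{K'})[2]|$, i.e.\ roughly $2^{\omega(D')}$. This is unbounded as $D'\to\infty$, so ``$O(1)$ choices of saturated $M$ per class'' is wrong. The correct bound on this factor is $\ll |\Pic(\OO_{K'})[2]|\ll_\eps D'^{\eps}$, and the inequality $|\Pic[2]|\ll_\eps D'^\eps$ is supplied by \emph{genus theory} — an input your argument never invokes, but without which the lemma fails. (It is visible in Pall's formula as the factor $2^{\nu}$.) With that repair the final estimate still goes through, since $d^\eps$ absorbs the extra factor.

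A secondary caution on the conductor step: your claim that the index-$s$ sublattices of a fixed $M$ with Gram form $G$ number $\ll\sigma_0(s)$ is believable (in the model case of invertible $\mathcal{O}_s$-ideals $I\subset\OO_{K'}$ with $I\OO_{K'}=\OO_{K'}$ and fixed class, the fibre has size $[\OO_{K'}^\times:\mathcal{O}_s^\times]\leq 3$), but as stated it presupposes that $G$ is a \emph{proper} $\mathcal{O}_s$-form over a \emph{fundamental} $G_M$. For $G=dx^2+2exy+dy^2$ the content is $\gcd(d,2e)$, which need not be $1$, and $-4D'$ need not be fundamental, so the multiplier ring of $L$ need not be $\mathcal{O}_s$ at all; you acknowledge this but the reduction to the proper primitive case is precisely where the divisor-function factors have to be tracked, and it should be carried out rather than asserted. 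The paper's first proof (Pall's theorem, resting on the mass formula) handles all of these degenerations uniformly, which is why the paper only sketches the class-group argument in the saturated, fundamental case.
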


This is proved in \S \ref{linniklemma}. It corresponds to the ``basic lemma'' in Linnik's ergodic method and is in a sense, a
 generalization of the well known bounds
$$\tau(d)=\sum_{ab=d}1\ll_{\eps}d^\eps,\ r_{2}(d)=\sum_{a^2+b^2=d}1\ll_{\eps}d^\eps.$$

Indeed, bounding the divisor function $\tau(d)$, or the number of representations $r_2(d)$ of $d$ as the sum
 of two squares, amount to bounding the number of representations of the rank $1$ form $dx^2$
  by the binary quadratic forms $Q(x,y)=xy$ and $Q(x,y)=x^2+y^2$ respectively.
   By comparison, Proposition \ref{BasicLemma} concerns the number of ways to represent the rank two form $d x^2 + e x y + d y^2$
    by the rank three form $x^2+y^2+z^2$.

\begin{prop}\label{meansquare} Let $\Sigma(d,\ell,q)$ be the number of pairs $(\x_1, \x_2) \in \Hd^2$ with $\gamma_{\x}^{(\ell)}=\gamma_{\x'}^{(\ell)}$.  Then 
$$\Sigma(d,\ell,q) \ll_{\eps} |\Hd|+d^\eps\left(1+\frac{d}{q^25^{2\ell}}\right),
$$
for any $\eps>0$.
\end{prop}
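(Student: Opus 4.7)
The strategy is to combine the Shadowing Lemma (Proposition \ref{Naddles}) with Linnik's basic lemma (Proposition \ref{BasicLemma}), translating a congruence constraint into a divisibility condition on the dot product $\x_1\cdot\x_2$.

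Suppose $(\x_1,\x_2)\in\Hd^2$ satisfies $\gamma_{\x_1}^{(\ell)}=\gamma_{\x_2}^{(\ell)}$. By Proposition \ref{Naddles} we have $\x_1\equiv \epsilon \x_2 \pmod{q5^\ell}$ for some sign $\epsilon\in\{\pm1\}$. The pairs with $\x_1=\epsilon\x_2$ contribute at most $2|\Hd|$, which is absorbed into the first term. From now on, assume $\x_1\neq \epsilon\x_2$, and set $e:=\x_1\cdot\x_2$; by Cauchy--Schwarz $|e|<d$.

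Write $\x_1-\epsilon\x_2=q5^\ell\mathbf{y}$ with $\mathbf{y}\in\Z^3\setminus\{0\}$. Then
\[
q^25^{2\ell}|\mathbf{y}|^2 \;=\; |\x_1-\epsilon\x_2|^2 \;=\; 2d-2\epsilon e.
\]
Since $q$ is coprime to $30$ and $5$ is odd, both $q^2$ and $5^{2\ell}$ are odd, so $q^25^{2\ell}\mid d-\epsilon e$. Hence $e$ ranges over an arithmetic progression with common difference $q^25^{2\ell}$ inside $(-d,d)$; the number of admissible values of $e$ (for each choice of sign $\epsilon$) is
\[
\ll 1+\frac{d}{q^25^{2\ell}}.
\]

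For each admissible value of $e$, Proposition \ref{BasicLemma} bounds the number of pairs $(\x_1,\x_2)\in\Hd^2$ with $\x_1\cdot\x_2=e$ by $\ll_\eps d^\eps$. Summing over the two signs and the admissible values of $e$ gives the contribution $\ll_\eps d^\eps\bigl(1+d/(q^25^{2\ell})\bigr)$ from the nontrivial pairs, and combining with the trivial contribution $\ll |\Hd|$ yields the claimed bound.

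The only real obstacle is the arithmetic step extracting $q^25^{2\ell}\mid d-\epsilon e$ from the congruence $\x_1\equiv\epsilon\x_2\pmod{q5^\ell}$; the coprimality of $q5^\ell$ with $2$ makes this immediate, after which Linnik's basic lemma does all of the remaining work.
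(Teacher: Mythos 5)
Your proof is correct and follows essentially the same route as the paper's: apply the shadowing lemma to get $\x_1\equiv\pm\x_2\pmod{q5^\ell}$, deduce $\x_1\cdot\x_2\equiv\pm d\pmod{q^25^{2\ell}}$ (using that $q5^\ell$ is odd), and then sum Linnik's basic lemma over the $\ll 1+d/(q^25^{2\ell})$ admissible values of $e$ in $(-d,d)$. Your write-up is if anything slightly more careful than the paper's, making the division by $2$ and the treatment of the proportional pairs explicit.
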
 
\proof Indeed,  $\gamma_{\x}^{(\ell)} = \gamma_{\x'}^{(\ell)}$ implies that $\x\pm \x'\equiv 0\ (q5^\ell)$
for some choice of $\pm$; hence either $(\x + \x').(\x + \x')\equiv 0\ (q^25^{2\ell})$ or $(\x - \x').(\x - \x')\equiv 0\ (q^25^{2\ell})$ Since $\x.\x=\x'.\x'=d$, we have
$$2 \x . \x' \equiv \pm 2 d\hbox{ mod }q^2 5^{2 \ell}$$
in either case.  
Thus the number of pairs $(\x,\x')$ such that $\gamma_{\x}^{(\ell)} = \gamma_{\x'}^{(\ell)}$ is bounded by
$$|\Hd|+\sum_\stacksum{|e|<d}{e\equiv\pm d(q^2 5^{2 \ell})}|\{(\x,\x')\in\Hd^2,\ \x.\x'=e\}|\ll_{\eps} |\Hd|+d^\eps\left(1+\frac{d}{q^25^{2\ell}}\right).
$$
\qed

\begin{prop}[$\Hd(q)$ is an expander] \label{JLD} For any $q$ coprime with $30$, the graph $\Gd(q)$ is connected and non-bipartite. In particular, its adjacency matrix, 
$A(\Gd(q))$, has a {\em spectral gap} and more precisely, if $$\lambda_1=6\geq\lambda_2\geq\dots \lambda_{|\Hd(q)|}\geq -6$$
 denote the eigenvalues of $A(\Gd(q))$, then
 $$|\lambda_j| \leq 2 \sqrt{5}, \ \ j \neq 1.$$
\end{prop}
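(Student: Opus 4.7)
The plan is to route the question through the quaternionic model of $\SO_3$ and the spectral theory of Hecke operators, ultimately invoking the Ramanujan--Petersson bound for classical weight-$2$ cusp forms.

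First I would use the classical identification $\SO_3 \simeq \PB$, where $\B$ is the Hamilton quaternion algebra $\bigl(\tfrac{-1,-1}{\Qq}\bigr)$, whose reduced norm restricted to pure quaternions is $x^2+y^2+z^2$. A direct computation identifies the matrices $A,B,C$ with conjugation by quaternions of reduced norm $5$---the six elements of $\mcA_5$ corresponding, up to the centre, to $1\pm 2i,\ 1\pm 2j,\ 1\pm 2k$. Combined with the adelic description of $\Hd(q)$ from Part~2, this identifies $L^2(\Hd(q))$ with a $\PB(\Aaf)$-equivariant subspace of automorphic functions on $\B^\times(\Qq)\bash \B^\times(\Aa)$ of suitable level, and identifies the adjacency operator $A(\Gd(q))$ with the classical Hecke operator $T_5$ at the (split, unramified) prime $5$.

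Given this dictionary, connectedness of $\Gd(q)$ follows from strong approximation for $\Spin_3\simeq \SL_1(\B)$ at the prime $5$: since $\B$ is split and noncompact at $5$, the local Hecke algebra at $5$ acts transitively on the relevant adelic double-coset space modulo its finite centre, which directly translates into connectedness of $\Gd(q)$. The eigenvalue $6$ is then attained only by the constant function, corresponding to the trivial automorphic representation; every other eigenvector of the adjacency operator is orthogonal to constants and lies in the cuspidal part of the automorphic spectrum of $\PB$.

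For the spectral bound, I would invoke the Jacquet--Langlands correspondence: each cuspidal automorphic representation $\pi$ of $\PB$ contributing to this spectrum transfers to a cuspidal automorphic representation of $\PGL_2$ whose archimedean component is the holomorphic discrete series of weight $2$, and the $T_5$-eigenvalue is preserved under the transfer. The Ramanujan--Petersson bound for weight-$2$ holomorphic cusp forms, established by Deligne via the Weil conjectures, then yields $|a_5(\pi)|\leq 2\sqrt{5}$ for each such $\pi$, giving $|\lambda_j|\leq 2\sqrt{5}$ for $j\neq 1$. Since $2\sqrt{5}<6$, the eigenvalue $-6$ is excluded, so $\Gd(q)$ is automatically non-bipartite. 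The main obstacle is precisely this last step: the bound $2\sqrt{5}$ is the sharp Ramanujan bound and depends essentially on Deligne's proof of the Weil conjectures (transferred from $\PGL_2$ via Jacquet--Langlands). By contrast, identifying $\mcA_5$ with a Hecke operator and deducing connectedness from strong approximation are formal consequences of the adelic framework of Part~2.
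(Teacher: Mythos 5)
Your route is the same as the paper's: identify $L^2(\Hd(q))$ with a space of automorphic forms on $\PB(\Q)\bash\PB(\Aaf)$ of level $K_f[3,q]$, identify $A(\Gd(q))$ with the Hecke operator at $5$, get connectedness from strong approximation, and get the bound $2\sqrt5$ from Jacquet--Langlands plus the weight-$2$ Ramanujan bound (Eichler--Shimura/Deligne). Two points, however, need repair.

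First, the quaternions realizing $\mcA_5$ are not $1\pm2i,\,1\pm2j,\,1\pm2k$. A unit quaternion $\cos(\theta/2)+\sin(\theta/2)u$ acts by rotation through $\theta$ about $u$; for $1+2i$ this gives $\cos\theta=\frac{1}{5}-\frac{4}{5}=-\frac35$, whereas $A,B,C$ are rotations by $\acos(-4/5)$. The correct elements are the norm-$10$ quaternions $1\pm3i,\,1\pm3j,\,1\pm3k$ (for which $\cos\theta=\frac{1}{10}-\frac{9}{10}=-\frac45$), as in \S\ref{ss:explicitaction}. This is not cosmetic: the norm-$10$ choice is exactly what makes the components at $p\neq 5$ land in $K_p$ with the mod-$3$ component trivial, which is what lets the adjacency operator of the graph defined by $A,B,C$ on $\Hd(q)$ (level $K_f[3,q]$) be identified with the standard double coset $K_5\alpha_5K_5$ at the place $5$. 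With $1\pm 2i$ the claimed ``direct computation'' fails.

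Second, your assertion that every eigenvector orthogonal to the constants ``lies in the cuspidal part'' is doing real work that strong-approximation-for-connectedness alone does not supply. The automorphic spectrum of the compact form $\PB$ also contains the one-dimensional representations $g\mapsto\chi(\Nr(g))$; a nontrivial such $\chi$ with a $K_f[3,q]$-fixed vector would contribute an eigenvalue $\pm6$ (and $-6$ would make the graph bipartite while leaving it connected), so deducing non-bipartiteness from $2\sqrt5<6$ is circular until these are excluded. The paper excludes them by the identity $\PB(\Q)\cdot\Theta\cdot K_f[3,q]=\PB(\Aaf)$, where $\Theta$ is the image of $\B^1(\Aaf)$: any such $\chi$ is trivial on all three factors, hence trivial. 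Establishing that identity is also where the descent from $\B^1$ (where strong approximation applies) to $\PB$ happens; the obstruction is the reduced norm, not the centre, and it is killed by a local transitivity check at the primes dividing $3q$ (Proposition \ref{pr:localaction}). You should make this step explicit; once it is, both connectedness and the absence of $-6$ follow, and the rest of your argument goes through as in the paper.
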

This is
 proved in Section \S \ref{JLDproof} using the adelic description of $\Hd(q)$. Indeed, the graphs $\Hd(q)$ are closely related to the original Ramanujan graphs of Lubotzky-Phillips-Sarnak\cite{LPS}.
 The content of this assertion is equivalent to the (optimal) Ramanujan
  bound on the Fourier coefficients of weight $2$ holomorphic forms of level up to $18.q^2$.  Thus, the existence of a spectral gap follows from any nontrivial bound for these Fourier coefficients -- for instance, the works of Kloosterman or Rankin.

The bound on the spectral gap in Proposition~\ref{JLD} says that the family of $6$-valent graphs $\bigl(\Gd(q)\bigr)_{(q,10)=1}$
 form a family of $\frac{3-\sqrt 5}{3}$ {\em expander} (in fact Ramanujan) graphs as $q \rightarrow \infty$.  
 We refer to \cite{Lubot} for an extensive and motivated discussion of expander graphs, their properties,
  applications, and their construction via automorphic forms. 

 We shall use the fact that  $\Hd(q)$ is an expander through: 
 \begin{prop}[Large deviation estimates]\label{chernoff} 
Fix $\eta, \eps > 0$. For any subset $ \Bscr\subset \Hd(q)$ with $|\Bscr| \geq \eta \Hd(q)$, the fraction of non-backtracking  paths $\gamma$
of length $2\ell$
satisfying:
$$\biggl| \frac{|\gamma \cap \Bscr|}{2\ell+1} - \frac{|\Bscr|}{|\Hd(q)|} \biggr|  \geq \eps$$
is bounded by $c_1 \exp(-c_2 \ell)$, where $c_1, c_2$ depend only on $\eps,\eta$. 
\end{prop}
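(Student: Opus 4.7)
The plan is to reduce Proposition~\ref{chernoff} to a standard Chernoff-type concentration estimate for the non-backtracking random walk on $\Gd(q)$, viewed as a Markov chain on oriented edges, and then use the spectral gap supplied by Proposition~\ref{JLD}.

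First I would deduce a uniform spectral gap for the non-backtracking transition operator. Let $\tilde E$ denote the set of oriented edges of $\Gd(q)$, and let $P$ be the transition matrix with $P((v,w),(w,w'))=1/5$ whenever $w'\neq v$ and zero otherwise. The uniform measure $\pi$ on $\tilde E$ is stationary for $P$ and projects to the uniform measure on $\Hd(q)$. The Ihara--Bass identity
\[
\det(I-5uP)=(1-u^2)^{|\tilde E|/2-|\Hd(q)|}\det\bigl(I-uA+5u^2I\bigr)
\]
relates each eigenvalue $\mu$ of the adjacency matrix $A=A(\Gd(q))$ to a pair of eigenvalues $\lambda$ of $5P$ via $\lambda^2-\mu\lambda+5=0$, plus extra eigenvalues $\pm 1$ coming from the $(1-u^2)$ factor. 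By Proposition~\ref{JLD}, every eigenvalue of $A$ other than the top $\mu=6$ satisfies $|\mu|\leq 2\sqrt 5$, and the corresponding $\lambda$ has $|\lambda|=\sqrt 5$. Hence $P$ has a simple top eigenvalue $1$ with the rest of its spectrum confined to $\{|z|\leq 1/\sqrt 5\}$, giving a uniform spectral gap $1-1/\sqrt 5>0$.

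With this in hand, I would run the standard exponential moment argument. Set $p=|\Bscr|/|\Hd(q)|$, and for $t\in\mathbb R$ define the multiplication operator $D_t$ on $\ell^2(\tilde E)$ by $(D_tf)(v,w)=e^{t\mathbf 1_\Bscr(w)}f(v,w)$, and the twisted transfer operator $M_t=D_t^{1/2}PD_t^{1/2}$. Telescoping gives
\[
\mathbb E\Bigl[\exp\Bigl(t\textstyle\sum_{i=0}^{2\ell}\mathbf 1_\Bscr(\bar x_i)\Bigr)\Bigr]\ll\rho(M_t)^{2\ell},
\]
with absolute implied constant. Since $M_t$ is a real-analytic perturbation of $M_0=P$ whose top eigenvalue $1$ is simple and isolated by the gap $1-1/\sqrt 5$, the standard resolvent expansion around the stationary eigenvector $\mathbf 1$ yields $\log\rho(M_t)=tp+O(t^2)$ for $|t|\leq t_0$, with $t_0$ and the implied constant depending only on $1-1/\sqrt 5$. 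Markov's inequality, optimized in $t\asymp\pm\eps$, then bounds both tails of $X=\sum_i\mathbf 1_\Bscr(\bar x_i)$ at the rate $c_1\exp(-c_2\ell)$ with $c_2=c_2(\eps,\eta)>0$.

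The main obstacle is the quadratic expansion $\log\rho(M_t)=tp+O(t^2)$: since $P$ is not self-adjoint on $\ell^2(\tilde E,\pi)$ (the non-backtracking chain is not reversible), one cannot directly invoke the Courant--Fischer characterization of the top eigenvalue. The clean workaround is to pass to the self-adjoint operator $P^*P$ (or equivalently to work with paths of length $4\ell$ grouped in pairs), for which the spectral gap descends from that of $P$ and the perturbation expansion becomes a routine computation in the orthogonal decomposition $\ell^2(\tilde E)=\mathbb R\mathbf 1\oplus\mathbf 1^\perp$. The constants produced are uniform in $d$ and $q$, since they depend only on the uniform Ramanujan-type gap supplied by Proposition~\ref{JLD}.
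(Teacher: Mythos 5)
Your reduction to the oriented-edge (arc) chain and the Ihara--Bass computation of the eigenvalues of $P$ are both correct, and this first step matches the paper, which likewise passes to the arc graph $\mathscr{G}'$ and extracts spectral information for its transition operator $T'=P$ from Proposition~\ref{JLD}. The divergence is in the second step: the paper runs a combinatorial Chernoff argument (Lemmas in \S\ref{expander}: bound the probability of each prescribed visiting pattern by a product of operator norms, sum binomially, and pass to a fixed power of the graph to shrink the norm), whereas you propose the exponential-moment/eigenvalue-perturbation route. That route is not unreasonable, but the specific repair you offer for its acknowledged difficulty does not work.

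The gap is this. The operator $P$ is genuinely non-normal, and everything after the Ihara--Bass step --- the bound $\mathbb{E}[e^{tX}]\ll\rho(M_t)^{2\ell}$, and the uniform perturbation expansion of the top eigenvalue --- requires control of \emph{powers} (equivalently, resolvent or singular-value data) of $P$, not merely of its eigenvalues. Your proposed fix, passing to $P^*P$, fails for a concrete reason: for the non-backtracking operator of \emph{any} $6$-regular graph one computes $P^*P=\tfrac{1}{25}\bigl(I+4Q\bigr)$, where $Q$ is the block matrix with an all-ones $6\times6$ block for each vertex (grouping oriented edges by their source). Hence $P^*P$ has eigenvalue $1$ with multiplicity $|\Hd(q)|$ and eigenvalue $1/25$ on the complement: the singular values of $P$ are identical for every $6$-regular graph, carry no expansion information whatsoever, and in particular $\|P|_{\mathbf{1}^{\perp}}\|=1$. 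So the spectral gap of $P^*P$ does \emph{not} descend from the eigenvalue gap of $P$; the top eigenvalue of $M_0^*M_0$ is massively degenerate, and degenerate perturbation theory then yields a first-order term of size $\asymp t$ rather than $tp$, i.e.\ only the trivial bound $\|M_t\|\approx e^{|t|}$. To salvage the exponential-moment approach you must bound $\|M_t^{2\ell}\|$ directly, and for that you need the explicit $T'$-invariant decomposition of $\ell^2$ of the arcs into the ``new space'' (where $T'$ acts as $-\tfrac15$ times edge-reversal) and the two-dimensional ``old'' blocks $\C(Bw)+\C(Ew)$ attached to eigenfunctions $w$ of $T$, controlling powers block by block via $\|T\|\le \sqrt5/3$. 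That decomposition is precisely the content of \S\ref{arcgraph} which your outline replaces with the singular-value shortcut; as written, the proposal does not close.
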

By $|\gamma \cap \Bscr|$ we mean the number of $i \in [-\ell,\ldots,\ell]$ such that the vertex $\obfx_i$ of $\gamma$ is contained in $\Bscr$; in other words, the ``amount of time'' $\gamma$ spends in $\Bscr$ if one imagines moving along the trajectory at a constant speed.  It is then natural to compare the portion of time  spent by a path in $\Bscr$ (i.e. the ratio $|\gamma \cap \Bscr|/(2\ell+1)$) with the probability of being in $\Bscr$ (i.e. ${|\Bscr|}/{|\Hd(q)|}$). Large deviation estimates show that with high probability, long paths spend the right amount of time in any large enough subset $|\Bscr|$.

Such estimates, first proved by Chernoff for complete 
 graphs, are a well-known and useful tool in different contexts; for instance it has been fruitfully applied in computer science
  (see \cite{HLW}).   
  We were unable to find a reference in the existing literature for the particular version we need here
   (a large deviation estimate for non-backtracking walks), and so give a proof from first principles in \S \ref{arcgraph} (Proposition \ref{chernofflemma}).

\subsection{Conclusion of the proof}

We conclude this section by explaining how
Propositions \ref{Naddles},
 \ref{BasicLemma} and \ref{chernoff} together imply Theorem \ref{mvQ}.

Let $\Bdel$ be the set of $\obfx \in \Hd(q)$ such that
 \be\label{devdelta}\dev(\obfx)= \frac{|\rmr_q^{-1}(\obfx)|}{|\Hd|/|\Hd(q)|}- 1 > \delta.\ee
Suppose that $|\Bdel| \geq \eta |\Hd(q)|$. We will derive a contradiction for fixed $\delta, \eta$ and large enough $d$.  
 A similar bound applies to the set of $\obfx$ for which $\dev(\obfx) < - \delta$; taken together these yield Theorem \ref{mvQ}. 
 
If $\Bscr$ is a subset of $\Hd(q)$, and $\gamma_{\x}^{(\ell)}$ is the trajectory of an $\x$ chosen randomly -- with respect to counting measure -- from $\Hd$, the expected size of $\gamma_{\x}^{(\ell)} \cap \Bscr$ is just the sum over $i \in [-\ell \ldots \ell]$ of the probability that $\obfx_i$ lies in $\Bscr$; this probability is $|{\rmr_q^{-1}(\Bscr)|}/|\Hd|$, independently of $i$.  In other words, 

\begin{equation} \label{iow} \frac{1}{|\Hd|}\sum_{\x\in\Hd}\frac{|\gamma_{\x}^{(\ell)} \cap \Bscr|}{2\ell+1}= \frac{|\rmr_q^{-1}(\Bscr)|}{|\Hd|}.\end{equation}

We shall take $\Bscr = \Bdel$ and choose $\ell$ so that
\be\label{eq:rangel}
\frac{1}{5}|\Hd|<  q^25^{2\ell}\leq 5|\Hd|.
\ee

Note that we've used here the hypothesis that $q \leq d^{1/4 - \nu}$.

From \refs{devdelta} and \eqref{iow}, the average value of $\displaystyle\frac{|\gamma_{\x}^{(\ell)} \cap \Bdel|}{2\ell+1}$ as $\x$ ranges over $\Hd$ exceeds $$ \frac{|\Bdel|}{|\Hd(q)|}(1+\delta)\geq \frac{|\Bdel|}{|\Hd(q)|} + \delta \eta.$$ 

 Since $\frac{|\gamma_{\x}^{(\ell)} \cap \Bdel|}{2\ell+1}\leq 1$ for every $\x$, the number of $\x \in \Hd$ for which
\begin{equation}
\label{badprop} \frac{|\gamma_{\x}^{(\ell)} \cap \Bdel|}{2\ell+1} >    \frac{|\Bdel|}{ |\Hd(q)|} + \delta \eta /2 \end{equation}
is at least 

$$\frac{\delta \eta}{2} |\Hd| \gg_{\eps} \delta \eta d^{1/2-\eps}$$
for any $\eps>0$ (the last bound following from \refs{Hdsize}).  

Let $M$ be the number of non-backtracking marked paths on $\Hd(q)$ satisfying \eqref{badprop}.  Given that $\ell$ is chosen so that \refs{eq:rangel} is valid, we see from Proposition \ref{meansquare} that the number of pairs $(\x,\x')$ yielding the same trajectory $\gamma_{\x}^{(\ell)}$ on $\Hd(q)$ is not much larger than the number of diagonal pairs
$|\Hd|$: i.e. is bounded by $\ll_{\eps} d^{1/2+\eps}$. It follows that
\be
M \gg_{\eps,\delta,\eta} d^{1/2 - \epsilon}.
\ee
for any $\epsilon > 0$.

On the other hand, we also have an upper bound for $M$.  The total number of non-backtracking marked paths of length $2\ell$ on $\Hd(q)$ is on order of $5^{2 \ell - 1} |\Hd(q)| \ll_\eps d^{1/2 + \eps}$.  Proposition~\ref{chernoff} says that, of these, the proportion which satisfy \eqref{badprop} is at most $d^{-\tau}$ for some $\tau  = \tau(\delta, \eta) > 0$; so 
\be
M \ll_{\eps,\delta,\eta} d^{1/2 - \tau}
\ee
which yields a contradiction for $d$ sufficiently large.

\subsection{Hecke trees on the sphere: an idea of the proof of Theorems \ref{linnikinfty} and \ref{MVinfty}} We briefly sketch the proof
 of Theorems \ref{linnikinfty} and \ref{MVinfty}. The archimedean analogue of Proposition \ref{JLD} is the fact that the Hecke operator on $L^2(S^2)$
$$T_{5}.f:\ \x\mapsto \sum_{w\in\mcA_{5}}f(w.\x)$$ has a spectral gap: the eigenvalue $6$ has multiplicity one and any other eigenvalue
$\lambda$ satisfies $$|\lambda|\leq 2\sqrt 5;$$
this again is a consequence of Deligne's bounds for Fourier coefficients of holomorphic modular forms. 

Let $\mcT_{5}$ be the set of reduced words in the alphabet $\mcA_{5}$, which has the structure of a rooted $6$-valent tree. To any $\x$ in $S^2$, one associates  the infinite graph $\mcT_{5}.\x\subset S^2$, which is called {\em the Hecke tree}.  The most direct consequence of the spectral gap property
is that $\mcT_{5}.\x$ is dense in $S^2$ and even equidistributed: more precisely, uniformly on $\x \in S^2$,
the set $\{W_{\ell}.\x\}$ where $W_{\ell}$ ranges over the reduced words in $\mcT_{5}$
of length $\ell$ becomes equidistributed as $\ell\ra+\infty$ \cite{LPS1}. A more refined consequence is a large deviation estimate  for non-backtracking
marked paths of length $2\ell$ --uniform in the origin of the path--which is proven along the same lines as Proposition \ref{chernoff}.

To prove Theorem~\ref{MVinfty}, one proceeds from here as in the proof of Theorem~\ref{mvQ},  replacing conditions like ``two trajectories stay equal mod $q$ for a long time" with ``two trajectories stay near each other on the sphere for a long time.''

 \begin{figure}
\centering
\resizebox{!}{5.5cm}{
\includegraphics{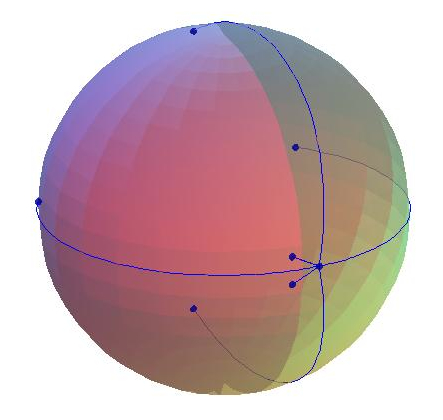}}

\centering
\resizebox{!}{5.5cm}{
\includegraphics{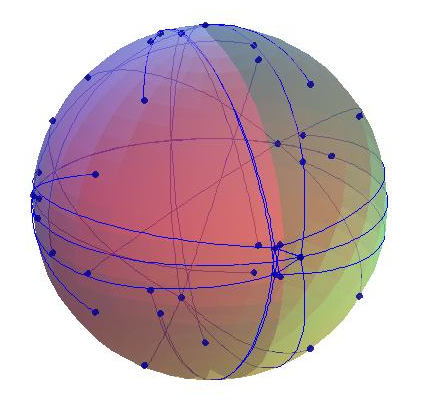}}

\centering
\resizebox{!}{5.5cm}{
\includegraphics{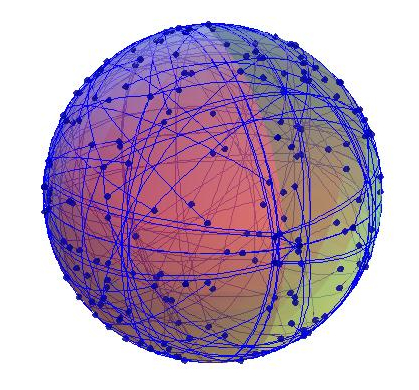}
}
\caption{The first three layers of the Hecke tree at $(10,1,0)$}
\end{figure} 
 \begin{figure}
\centering
\resizebox{!}{5.5cm}{
\includegraphics{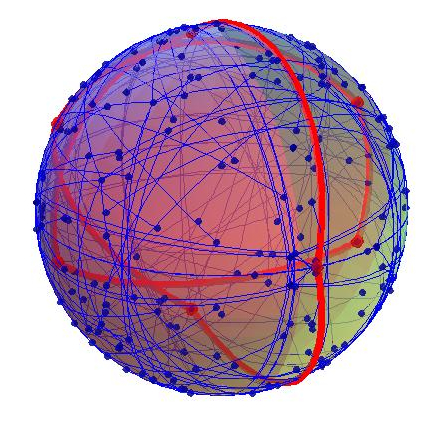}
}
\caption{The trajectory of $(10,1,0)$ within its Hecke tree}
\end{figure} 

\part{Classical theory}\label{Part:classical}
\section{The action of the class group on $\tildeHd$} \label{torsorA}

We present,  in Proposition \ref{torsor1}, a precise version of the homogeneous space structure on $\tildeHd$ discussed in Proposition \ref{torsor0}.   As we have remarked, the basic ideas here go back to Venkov \cite{Venkov,Venkov2} and in some sense to Gauss. 

It will be convenient to modify, slightly, the definition of $\tildeHd$ in the case when $d \equiv 3$ modulo $4$. 
Let $\SO_3(\Zz)^{+}$ be the index-$2$ subgroup of $\SO_3(\Zz)$ consisting of matrices which act on the coordinate lines via even permutations. Set 
$$\Hdstar=\begin{cases} \SO_3(\Z)^+\bash \Hd&\hbox{, if $d \equiv 1,2$ mod $4$}\\
\phantom{+}\SO_3(\Z)\bash \Hd&\hbox{, if $d \equiv 3$ mod $4$}
\end{cases}$$
Thus, $\Hdstar$ and $\tildeHd$ are equal when $d\equiv 3$ modulo $4$; otherwise,
the former is a double cover of the latter. 

 We shall show
that in fact $\Hdstar$ has the natural structure of torsor for $\Pic(\OO_K)$. 
Recall that a space $X$ homogeneous for the action of some group $G$ and for which the stabilizer of some (hence any) point is trivial is called a {\em principal homogeneous space} or {\em torsor} for $G$. In that case, for any $x\in X$, the map $$g\in G\mapsto g.x\in X$$ for any given $x\in X$ provides an identification of $G$ with $X$ as $G$-spaces. There is a priori no canonical way of choosing $x$; thus, one may think of a torsor as a set endowed with many different identifications with $G$ but, in general, with no {\em canonical} one.  For instance, the set of $n$th roots of $2$ is a torsor for the group $\mu_n$ of $n$th roots of unity. 

\subsection{Quaternions}\label{quaternions} We first recall the following classical facts.

Let $\B$ be the $\Qq$-algebra of Hamilton quaternions.  For $x=u+a.i+b.j+b.k\in\B$, the canonical involution is noted
 $\ov x=u-a.i-b.j-b.k$, the reduced trace $\tr(x)=x+\ov x=2u$ and the reduced norm $\Nr(x)=x.\ov x=u^2+a^2+b^2+c^2$.
 Let $\B^{(0)}$ denote the space of trace-free quaternions (the kernel of $\tr$) also called the {\em pure} quaternions. The space $\B^{(0)}$  endowed with the reduced norm is a quadratic space and the
 map \be\label{identification}(a,b,c)\mapsto a.i+b.j+b.k
 \ee
 is an isometry between the quadratic space $(\Qq^3,a^2+b^2+c^2)$ and $(\B^{(0)},\Nr)$.
  In the sequel, we will freely identify $\Q^3$ with $\B^{(0)}$, and, in particular, consider the elements of
   $\Hd$ as trace-free quaternions.

   We denote by $\B^\times$,  $\B^{1}$ and $\PB=\B^\times/Z(\B^\times)$ respectively , the group of units  of $\B$,  the subgroup of units of reduced norm one, and the projective group of units; these define $\Qq$-algebraic groups, and the action of $\B^\times$ on $\B^{(0)}$ by conjugation induces a covering and an isomorphism of $\Qq$-algebraic groups \cite[Th. 3.3]{Vigneras}:
\begin{equation} \label{Qcovering}Z(\B^\times)\hookrightarrow\B^\times\twoheadrightarrow\PB\stackrel{\sim}{\ra} \SO(a^{2}+b^2+c^2). \end{equation}

\subsection{Integral structures}\label{integralstructure}

Let $\B(\Zz)$ denote the ring of {\em Hurwitz quaternions},
$$\B(\Zz)=\Zz[i,j,k, \frac{1 + i + j + k}2];$$
It is well known that the ring of Hurwitz quaternions, endowed with the reduced norm $\Nr$, is {\em euclidean}:
for any $y,q\in\B(\Zz)-\{0\}$, there is $x, r\in\B(\Zz)$ such that $\Nr(r)<\Nr(q)$ and $y=qx+r$ (\cite[\S 5.7]{samuel}). 
This implies that any left (or right) $\B(\Zz)$-ideal is a principal ideal: any finitely generated left (resp. right) $\B(\Zz)$-module $I\subset \B(\Q)$ is of the form $\B(\Zz)q$ (resp. $q\B(\Zz)$) for some $q\in\B(\Q)$; moreover any subring of $\B(\Q)$ which is finitely generated (as a $\Zz$-module) is conjugate to a subring\footnote{Indeed, given $R$ such a subring, $R\B(\Zz)$ is a right $\B(\Zz)$ ideal, so of the form $R\B(\Zz)=q\B(\Zz)$ and $q^{-1}Rq\subset q^{-1}RR\B(\Zz)=q^{-1}R\B(\Zz)=\B(\Zz)$.} of $\B(\Zz)$; in particular $\B(\Zz)$ is a maximal order of $\B(\Q)$ and any maximal order in $\B(\Q)$ is $\B(\Q)^\times$-conjugate to it.

 Finally, under \refs{identification},  the lattice $\Zz^3\subset\Qq^3$  becomes identified with the trace free integral quaternions,
 $$\B^{(0)}(\Zz)=\B^{(0)}(\Qq)\cap\B(\Zz).$$
 
 In particular, we obtain a map $\B(\Zz)^{\times} \rightarrow \SO_3(\Zz)$ whose image is the index two subgroup $\SO_{3}(\Zz)^+$ and, similarly, $(\B(\Zz)\otimes_{\Zz}\Zz_{p})^\times \rightarrow \SO_3(\Zz_p)$ (which is surjective unless $p=2$).

While our primary interest will be in the Hamilton quaternions and sums of three squares, the reader will observe that much of what we state here and below is valid for more general quaternion algebras endowed with a maximal order. As this may be useful for readers interested in representations by other ternary quadratic forms, we will write several of the intermediate steps in more generality.

\subsection{Construction of representations using ideal classes}\label{generatepoints} As a first example of the usefulness of the quaternion, let us show how to deduce the Gauss-Legendre theorem 
from the {\em Hasse-Minkowski local-global principle}. If $d>0$ is not of the form $4^{a}(8b-1)$, then, for any prime $p$,
$d$ is representable as a sum of three squares in $\Zp^3$ and since $d$ is positive, $d$ is also representable over $\Rr$. By the
{\em Hasse-Minkowski theorem} (cf. \cite{serre}[Thm. 8, p. 41]), there exists $x=(a,b,c)\in\Qq^3$ such that $a^2+b^2+c^2=d$. Let $x=ai+bj+ck$; then $x^2=-d$ so the ring $\Zz[x]=\Zz+\Zz x$ is finitely generated. By \S \ref{integralstructure},  there is $q\in\B(\Q)$ such that $q\Zz[x]q^{-1}\in\B(\Zz)$
so that $y:=qxq^{-1}\in\B^{(0)}(\Zz)$ is integral and satisfies $y^2=-d$.

More generally, the above scheme together with the group of $\OO_{K}$-ideal classes makes it possible to generate plenty of {\em new} integral representations from a given one.

Any element $x\in \Hd$ yields an embedding of $\Qq(\sqrt{-d})$ into $\B(\Qq)$:
indeed $x^2=-d$ and thus $\sqrt{-d} \mapsto x$ defines an embedding  $$\iota_{x}:K\mapsto\Qq[x]\subset\B(\Qq).$$
 This embedding is {\em integral} in the following sense: let
$$\OO_x = \B(\Zz) \cap \Q[x]$$
then $\iota^{-1}_{x}(\OO_x)= \OO_{K}$  is the ring of integers\footnote{It is an order containing $\Z[\sqrt{-d}]$, integrally closed at
$2$, because the local order $\B(\Zz) \tensor_\Z \Z_2$ contains all elements of $\B \tensor_\Q \Q_2$ with norm in $\Z_2$. } of $K$.

Now, given such a $x$ and given an $\OO_{K}$-ideal, $I$, we can also construct a {\em new} integral representation $y\in\Hd$
from $x$ and $I$: the finitely generated $\Zz$-module $\B(\Zz)\iota_{x}(I)$ is a left $\B(\Zz)$-ideal, so of the form $\B(\Zz)q$ and then
$$y=qxq^{-1}\subset \B(\Zz)\iota_{x}(I)xq^{-1}=\B(\Zz)\iota_{x}(xI)q^{-1}\subset \B(\Zz)\iota_{x}(I)q^{-1}=\B(\Zz);$$
moreover if $I$ is replaced by $\lambda.I=I.\lambda$ $\lambda\in K^\times$, $q$ may be replaced by $q'=q\iota_{x}(\lambda)$
and $q'xq'^{-1}=q'\iota_{x}(\lambda)x\iota_{x}(\lambda^{-1}){q'}^{-1}=y$. Notice that $q$ is defined only up to multiplication on the left by
an element of $\B^\times(\Zz)$ this implies $y$ is well defined up to $\B^\times(\Zz)$-conjugacy: in view of the isomorphism $\B^\times(\Zz)/\pm 1\simeq \SO_{3}(\Zz)^+$, we obtain for $x\in\Hd$ fixed, a well defined map
$$[I]\in\Pic(\OO_{K})\mapsto \Hdstar.$$
Let us see that this map is a $\Pic(\OO_{K})$-torsor structure on  $\Hdstar$.

\subsection{}
For each pair of elements $x,y \in \HH_d$ we define the abelian group
\beq
\Lambda_{x \ra y} = \{ \lambda \in \B(\Zz): x  \lambda =   \lambda y  \}.
\eeq
Then $\Lambda_{x \ra y}$ has the structure of a module under $\OO_x$
(via $\mu \in \OO_x: \lambda \mapsto \mu  \lambda $) and also under $\OO_y$
(via $\nu \in \OO_y: \lambda \mapsto \lambda \nu $).  Both structures are torsion-free, and thus locally free.

  In fact,in view of the isomorphism $\SO_3(\Q) \cong \PB(\Qq)$,  there exists a $q \in \B^{\!\times}\!(\Q)$ such that $q^{-1}xq = y$.  (By Witt's theorem, any two elements of $\Q^3$ of the same length can be rotated into one another.)   One can then write $\Lambda_{x \ra y}$ as $\Q[x]q \cap \B(\Zz)$.  It follows that $$\Lambda_{x \ra y} \tensor_\Z \Q = \Q[x] q,$$ and thus that both module structures on $\Lambda_{x \ra y}$ are, indeed,  locally free of rank $1$.

We note that the map $M \rightarrow M    \otimes_{\OO_x}   \Lambda_{x \ra y}  $
gives a map from $\OO_x$-modules to $\OO_y$-modules; this induces an isomorphism
$\Pic(\OO_x) \ra \Pic(\OO_y)$, which is in fact the one induced
by the isomorphism $\OO_x \cong \OO_y$ sending $x$ to $y$. 
We denote by $[\Lambda_{x \ra y}]$ the class of $\Lambda_{x \ra y}$ in $\Pic(\OO_{x}) = \Pic(\OO_K)$.

\begin{prop}[Torsor structure on $\Hdstar$]\label{torsor1}
The set $\Hdstar$ has the structure of a torsor for $\Pic(\OO_{K})$ in which
the unique element of $\Pic(\OO_K)$ mapping $x$ to $y$ is given by 
$[\Lambda_{x \ra y}]$.

This action of $\Pic(\OO_K)$
descends to $\tildeHd$ (obvious if $d\equiv 3(\modu 4)$), and for $d\equiv 1,2(\modu 4)$ the stabilizer of any point of $\tildeHd$ is the order $2$ subgroup generated by a prime above $2$.

More precisely, 
for any $x,y, z \in \Hd$, 
\begin{enumerate}%
\item[A.] $\Lambda_{x \ra y} \otimes_{\OO_x} \Lambda_{y \ra  z}$ is isomorphic to
$\Lambda_{x \ra z}$ as an $(\OO_x, \OO_z)$-bimodule; 
\item[B.] The class of $\Lambda_{x \ra y}$ is trivial if and only if $x$ and $y$ are identified in
$\Hdstar$; 
\item[C.] For every $x$ and every $g \in \Pic(\OO_K$), there exists a $y$ such $[\Lambda_{x \ra y}] = g$.
\item[D.] If $d \equiv 1,2$ modulo $4$,  and $x \neq y \in \Hdstar$ project to the same element of $\tildeHd$, 
then $[\Lambda_{x \ra y}]$ is the ideal class of a prime above $2$. 

\end{enumerate}
\end{prop}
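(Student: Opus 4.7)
I would prove the four assertions in the order A, C, B, D: A is the technical engine, C builds an explicit dictionary between ideal classes and points of $\Hdstar$, B then falls out of A by a quick calculation, and D is a specialization of C.

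For Part A, multiplication in $\B(\Z)$ sends $\Lambda_{x\to y} \times \Lambda_{y\to z}$ into $\Lambda_{x\to z}$, is $\OO_y$-balanced, and so induces an $(\OO_x, \OO_z)$-bimodule morphism $m : \Lambda_{x\to y} \otimes_{\OO_y} \Lambda_{y\to z} \to \Lambda_{x\to z}$. Both sides are invertible $\OO_x$-modules (locally free of rank one, as observed in \S3.4), so the cokernel of $m$ is a finite $\OO_K$-torsion module and it suffices to check surjectivity prime by prime. At primes $p$ where $\B \otimes \Q_p$ is a matrix algebra, Morita equivalence reduces the claim to multiplicativity of invertible $\OO_{K,p}$-ideals; at $p = 2$, where $\B \otimes \Q_2$ is the quaternion division algebra, the maximal order $\B(\Z_2)$ has a unique two-sided maximal ideal, and the identification of $\Lambda_{x\to y} \otimes \Z_2$ with a fractional $\OO_{K,2}$-ideal inside $\B \otimes \Q_2$ turns surjectivity of $m$ into a statement about matching valuations under the reduced norm.

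For Part C, given $x$ and integral ideal $\mathfrak{a}$, proceed as in \S\ref{generatepoints}: pick $q \in \B^\times(\Q)$ with $\B(\Z) q = \B(\Z) \iota_x(\mathfrak{a})$, and set $y = qxq^{-1} \in \Hd$. The condition $x\lambda = \lambda y$ rewrites as $\lambda q \in \Q[x]$, giving $\Lambda_{x\to y} = (\Q[x] \cap \B(\Z)q) q^{-1}$; a local check at each prime (using that $\mathfrak{a}_p$ is principal) identifies $\Q[x] \cap \B(\Z)q$ with $\iota_x(\mathfrak{a})$, so $\Lambda_{x\to y} \cong \mathfrak{a}$ as $\OO_x$-module and $[\Lambda_{x\to y}] = [\mathfrak{a}]$. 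Part B then follows in two steps: if $x, y$ have the same image in $\Hdstar$, take $u \in \B^\times(\Z)$ with $uxu^{-1} = y$; then $u \in \Lambda_{x\to y}$, and every $\lambda \in \Lambda_{x\to y}$ satisfies $\lambda u^{-1} \in \B(\Z) \cap \Q[x] = \OO_x$, whence $\Lambda_{x\to y} = \OO_x u$. Conversely, given $\Lambda_{x\to y} = \OO_x u$, apply Part A with $z = x$: complex conjugation identifies $\Lambda_{y\to x}$ with $\bar u \OO_x$, and the multiplication isomorphism of A forces $\OO_x u \cdot \bar u \OO_x = \Nr(u) \OO_x$ to equal $\Lambda_{x\to x} = \OO_x$, hence $\Nr(u) = 1$, so $u \in \B^\times(\Z)$ and the images of $x, y$ coincide.

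For Part D, assume $d \equiv 1, 2 \pmod{4}$ and that $x \neq y$ in $\Hdstar$ project to the same element of $\tildeHd$. Choose $\sigma \in \SO_3(\Z) \setminus \SO_3(\Z)^+$ with $\sigma x \sigma^{-1} = y$. Since the image of $\B^\times(\Z) \to \SO_3(\Z)$ is exactly $\SO_3(\Z)^+$, any lift of $\sigma$ to $\B^\times(\Q)$ falls outside $\B^\times(\Z) \cdot \Q^\times$, and rescaling yields an integral representative $q \in \B(\Z)$ with $\Nr(q) = 2$ (concretely, $q = i+j$ for transposition-type rotations, $q = 1+k$ for quarter-turns). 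Then $y = qxq^{-1}$, and the Part~C analysis identifies $\Lambda_{x\to y}$ with an integral $\OO_K$-ideal of norm $\Nr(q) = 2$. Since $d \equiv 1, 2 \pmod{4}$ forces $2$ to ramify in $K$, the unique integral ideal of norm $2$ is the prime $\mathfrak{p}_2$ above $2$, so $[\Lambda_{x\to y}] = [\mathfrak{p}_2]$. The main obstacle throughout is the integral (not merely generic) surjectivity in Part~A; an alternative route that bypasses the division-algebra bookkeeping at $p = 2$ is to establish C and B directly and then deduce A formally from the group law on $\Pic(\OO_K)$ via the bijection supplied by C.
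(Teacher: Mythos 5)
Your architecture matches the paper's (multiplication map for A, the euclidean/idelic construction of \S\ref{generatepoints} for C, unit analysis for B, a norm-$2$ quaternion for D), but two of the steps as written would fail, both for the same underlying reason. First, in Part A the multiplication map $m$ is \emph{not} surjective in general: when $d\equiv 3\ (\modu 8)$ the prime $2$ is inert in $K$, so $\Qq_2[x]^\times$ has norms of even valuation only; since $\B(\Zz_2)$ is the maximal order of a division algebra, $\Lambda_{x\ra y}\otimes\Zz_2=\Qq_2[x]q\cap\B(\Zz_2)$ has minimal norm-valuation $0$ or $1$ according to the parity of $v_2(\Nr q)$, and the product of two lattices of minimal norm-valuation $1$ cannot generate a lattice of minimal norm-valuation $0$. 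So "check surjectivity prime by prime" hits a genuine obstruction at $p=2$; you flagged this as the main obstacle but did not resolve it. The resolution (which is what the paper does) is to show the cokernel of $m$ is supported at the prime above $2$, which for $d\equiv 3\ (\modu 8)$ is the principal ideal $2\OO_K$, so the two sides are still \emph{abstractly} isomorphic as bimodules even though $m$ itself is not onto.

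This weaker form of A then breaks your converse in Part B: from $\Lambda_{x\ra y}=\OO_x u$ you invoke A with $z=x$ to force $\Nr(u)\OO_x=\OO_x$, but A only supplies an abstract isomorphism, and $\Nr(u)\OO_x\cong\OO_x$ as a module for \emph{every} $u$, so no constraint on $\Nr(u)$ follows. What is actually needed is the local norm analysis (Proposition \ref{pr:localaction}): a global generator of a trivial $\Lambda_{x\ra y}$ must have norm a unit at every odd $p$, and at $p=2$ a unit when $d\equiv 1,2\ (\modu 4)$ (ramified case) resp.\ of valuation $\leq 1$ when $d\equiv 3\ (\modu 8)$; this is what bounds $\Nr(u)\in\{1,2\}$ and lets you conclude. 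Relatedly, your forward direction of B only treats $u\in\B(\Zz)^\times$, i.e.\ $\SO_3(\Zz)^+$; for $d\equiv 3\ (\modu 4)$ one has $\Hdstar=\SO_3(\Zz)\bash\Hd$, and identification in $\Hdstar$ may be realized only by a quaternion of norm $2$ (e.g.\ $1+i$), a case you must handle separately using again that $2\OO_K$ is principal there. Your fallback of proving C and B first and deducing A is viable (multiplicativity follows from composing the constructions $\B(\Zz)q^{-1}=\B(\Zz)\iota_x(\mfa)$), but it does not rescue B, whose converse you derived from A.
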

Note that parts A--D imply the first statements. 

In order to prove Proposition \ref{torsor1}, we shall need a study of the corresponding {\em local} problem,
which we undertake in \S \ref{torsor-local}.

\subsection{Some local analysis} \label{torsor-local}

Let $B$ be a quaternion algebra over $\Q_p$, let $\OO$ be a maximal order of $B$ and $\OO^{(0)}\subset B^{(0)}$ be the lattice of trace zero elements in $\OO$. As above, conjugation by element of $B^\times$ induces a surjective map $$B^\times\twoheadrightarrow \SO(B^{(0)},\Nr)$$ under which $\OO^\times$ maps to $\SO(\OO^{(0)},\Nr)$.
Let $\OO^{(1)} \subset \OO^\times$ be the group of norm-$1$ units.  Let $d \in \Zp$ be squarefree (that is, $\ord_p(d) \leq 1$)  and let $\OO^{(0,d)}\subset\OO^{(0)}$ be the set of  elements of $\OO$ with
trace $0$ and norm $d$;  $\OO^{(0,d)}$ is obviously stable under the action of $\OO^{\times}$ by conjugation.

We fix an element $x$ of $\OO^{(0,d)}$ and, as above, write $\Lambda_{x \ra y}$ (or, when no confusion is likely, just $\Lambda$) for the set of $\lambda \in \OO$ such that
\begin{equation}
x \lambda = \lambda y.
\label{eq:lambdadef}
\end{equation}
The solutions to \eqref{eq:lambdadef} in $B$ form a vector space of dimension $2$, so $\Lambda$ is a free $\Z_p$-module of rank $2$. %

\begin{prop}
\label{pr:localaction}
The action of $\OO^{(1)}$ and $\OO^{\times}$ on $\OO^{(0,d)}$ can be described as follows.

\begin{itemize}
\item[-] Suppose $B$ is a division algebra.  
Then there are two orbits of $\OO^{(1)}$ on $\OO^{(0,d)}$, which are interchanged by conjugation by any element of $\OO^{\times}$ whose norm is not in $\Nr(\Qp[x])$.  In particular, the special orthogonal group of $\OO^{(0)}$ acts transitively on $\OO^{(0,d)}$.

\item[-] Suppose $B = M_2(\Q_p)$. Then:
\begin{itemize}
\item If $p \neq 2$ and $p$ does not divide $d$,  the action of $\OO^{(1)}$ on $\OO^{(0,d)}$ is transitive. 
\item Otherwise, there are two orbits of  $\OO^{(1)}$ on $\OO^{(0,d)}$;
they are interchanged by $\OO^{\times}$, unless $p=2$ and $d \equiv 3 $ mod $4$. 
\end{itemize}

\end{itemize}

\end{prop}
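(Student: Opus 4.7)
The plan is to convert each orbit question into a double-coset calculation via Skolem-Noether, then handle the two algebras with rather different tools (local class field theory for the division case, geometry of the Bruhat-Tits tree for the matrix case). Since $x^2 = -d$, the subalgebra $K := \Qp[x]$ is $\Qp[T]/(T^2+d)$, a field iff $-d \notin (\Qp^\times)^2$ and always a field in the division case. By Skolem-Noether, $B^\times$ acts transitively by conjugation on $B^{(0,d)}$ with stabilizer of $x$ equal to $C_{B^\times}(x) = K^\times$; writing $S_x := \{g \in B^\times : gxg^{-1} \in \OO\}$, for any subgroup $H \leq B^\times$ the $H$-orbits on $\OO^{(0,d)}$ are parametrized by $H \backslash S_x / K^\times$.

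In the division case, $\OO$ is the unique maximal order of $B$ (the elements of non-negative valuation), so $g^{-1}\OO g = \OO$ for every $g$ and hence $S_x = B^\times$. Using $\Nr(B^\times) = \Qp^\times$, $\Nr(\OO^\times) = \Zp^\times$, and $B^{(1)} = \OO^{(1)}$ (a reduced-norm-$1$ element has $B$-valuation $0$, hence is a unit), the reduced norm induces $\OO^{(1)} \backslash B^\times \xrightarrow{\sim} \Qp^\times$. The orbit set is then $\Qp^\times / \Nr(K^\times)$, which has order $2$ for any quadratic field extension $K/\Qp$ by local class field theory; any $u \in \OO^\times$ with $\Nr(u) \notin \Nr(K^\times)$ swaps the two cosets.

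For the matrix case ($\OO = M_2(\Zp)$), the set $S_x/K^\times$ is analyzed using the action of $\bar x \in \PGL_2(\Qp)$ on the Bruhat-Tits tree $\mcT$: $g \in S_x$ iff $\bar x$ fixes the vertex $g^{-1}v_0$. The fixed-vertex set of $\bar x$ (which has order $2$ in $\PGL_2$) is an apartment when $K$ is split, a single vertex when $K/\Qp$ is an unramified quadratic field, and a single edge when $K/\Qp$ is ramified. A second invariant of $y = gxg^{-1} \in \OO^{(0,d)}$ is the local order $\OO_y := \OO \cap \Qp[y]$; comparing discriminants shows $\OO_y \in \{\Zp[x], \OO_K\}$ and these are distinct precisely when $p=2$ and $d \equiv 3 \pmod 8$. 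Together, the pair (fixed-vertex configuration, $\OO_y$) distinguishes $\OO^\times$-orbits, and the $\OO^{(1)}$-orbits inside each split into $[\Zp^\times : \Nr(\OO_y^\times)]$ pieces, which is $2$ iff $K/\Qp$ is ramified and $\OO_y = \OO_K$. This recovers all stated sub-cases: a single orbit when $p \nmid 2d$; two $\OO^{(1)}$-orbits merged by $\OO^\times$ in the typical ramified case (e.g.\ via $\mathrm{diag}(1,-1) \in \GL_2(\Zp)$, which anti-commutes with the standard $x$); and, when $p=2$, $d \equiv 3 \pmod 8$, two $\OO^{(1)}$-orbits distributed across two distinct $\OO^\times$-orbits, one with $\OO_y = \Zp[x]$ and one with $\OO_y = \OO_K$.

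The main obstacle is the dichotomy in the matrix case at $p=2$, $d \equiv 3 \pmod 8$: the proper containment $\Zp[x] \subsetneq \OO_K$ produces two structurally different local orders $\OO_y$ for different $y \in \OO^{(0,d)}$, and verifying that no integral conjugator exchanges them requires an explicit $2$-adic calculation (one exhibits representatives $y,y'$ for which any conjugator $g \in \GL_2(\Qp)$ must have $v_2(\det g) \geq 1$), rather than a clean formal double-coset argument.
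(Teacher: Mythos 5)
Your route is genuinely different from the paper's: in the split case the paper computes $\Lambda_{x \ra y}$ explicitly for $y = \mat{0}{1}{-d}{0}$ and reduces everything to whether the binary form $aX^2+2bXY-cY^2$ of discriminant $-4d$ represents $1$ (resp.\ a unit) over $\Zp$, quoting the classification of such forms; you instead use the tree together with the optimal-order invariant $\OO_y = \OO \cap \Qp[y]$ and the double-coset count $[\Zp^\times : \Nr(\OO_y^\times)]$. The division-algebra half is essentially identical to the paper's (both reduce to $\Qp^\times/\Nr(\Qp[x]^\times)$ having order $2$), and the orbit-counting formula you use in the matrix case is correct.

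There is, however, a concrete error in the matrix case at $p=2$. The orders $\Zp[x]$ and $\OO_K$ are distinct precisely when $d \equiv 3 \pmod 4$, not $d \equiv 3 \pmod 8$: one has $\disc(\Z_2[\sqrt{-d}]) = -4d$, while the maximal order of $\Q_2[\sqrt{-d}]$ has discriminant $-d$ whenever $-d \equiv 1 \pmod 4$, and this includes $d \equiv 7 \pmod 8$, where $\Q_2[\sqrt{-d}] \cong \Q_2 \times \Q_2$ is split and $\Z_2[\sqrt{-d}]$ has index $2$ in $\Z_2 \times \Z_2$. As written, your case analysis therefore misses (or implicitly contradicts) the case $p=2$, $d \equiv 7 \pmod 8$, for which the proposition still asserts two $\OO^{(1)}$-orbits that are \emph{not} interchanged by $\OO^\times$; indeed $y = \mathrm{diag}(s,-s)$ with $s^2=-d$ has $\OO_y = \Z_2\times\Z_2$ maximal, while $y' = \mat{0}{1}{-d}{0}$ has $\OO_{y'} = \Z_2[\sqrt{-d}]$ non-maximal, so two distinct $\OO^\times$-orbits occur here as well. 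Once the congruence is corrected to $d \equiv 3 \pmod 4$, your argument does close up --- and more cleanly than your final paragraph suggests: conjugation by $u \in \OO^\times$ carries $\OO \cap \Qp[y]$ isomorphically onto $\OO \cap \Qp[uyu^{-1}]$, so the invariant $\OO_y$ already forbids any integral conjugator between the two types with no separate valuation computation; what must be checked is that both values of the invariant are realized, e.g.\ by $\mat{0}{1}{-d}{0}$ and $\mat{1}{-2}{(d+1)/2}{-1}$. A smaller slip: $\mathrm{diag}(1,-1)$ has determinant $-1$, which lies in $\Nr(\OO_K^\times)$ for some ramified $K$ (e.g.\ $p \equiv 1 \pmod 4$), so it need not swap the two $\OO^{(1)}$-orbits; the swap is nonetheless automatic because $\OO^\times$ acts transitively on the two cosets of $\Nr(\OO_y^\times)$ in $\Zp^\times$.
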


\begin{proof}

Suppose $B$ is a division algebra.  Then $\OO$ is the unique maximal order, and consists of all elements whose norm lies in $\Zp$.  Let $\lambda$ be a nonzero element of $\Lambda$; then $y = \lambda^{-1} x \lambda$.  Note that conjugation by $\lambda$ is the desired isometry of $\OO^{(0,d)}$ relating $x$ to $y$. 

 If there is an element $\alpha \in \Qp[x]$ with $\Nr(\alpha) = \Nr(\lambda)$, then $\alpha^{-1}\lambda $ is an element of $\Lambda$ of norm $1$; conversely, any element of $\Lambda$ of norm $1$ is $\alpha^{-1}\lambda$ for some $\alpha \in \Qp[x]$ whose norm agrees with that of $\lambda$.  This shows that the orbits of $\OO^{(1)}$ on $\OO^{(0,d)}$ are naturally identified with $\Qp^{\times} / \Nr(\Qp[x]^\times)$.  This quotient is a group of order $2$. 

Now suppose that $B = M_2(\Q_p)$, so that we can take $\OO^{\times} = \GL_2(\Z_p)$ and $\OO^{(1)} = \SL_2(\Z_p)$.

Let $x = \mat{b}{a}{c}{-b}$ be an element of $\OO^{(0,d)}$ (so that $b^2 + ac = -d$) and $y = \mat{0}{1}{-d}{0}$; then 
\beq
\Lambda_{x \ra y} = \Z_p \mat{b}{1}{c}{0} + \Z_p\mat{a}{0}{-b}{1}
\eeq
and the elements of $\Nr(\Lambda_{x \ra y})$ are those elements of $\Z_p$ represented by the quadratic form $Q  = aX^2 + 2bXY - cY^2$, which has discriminant $-4d$.  Thus, $x$ and $y$ are in the same orbit of $\OO^{(1)}$ if and only if $Q$ represents $1$ over $\Zp$.

For all facts used below about isomorphism classes of binary quadratic forms over $\Z_p$, see  ~\cite[\S 31]{jone:qf}. 

First, suppose $p$ is odd.   If $p$ does not divide $d$, then $Q$ is equivalent to $X^2 + dY^2$, and in particular represents $1$.  So in this case, $\OO^{(1)}$ acts transitively on $\OO^{(0,d)}$.  If $p$ divides $d$, then $Q$ is equivalent to either $X^2 + dY^2$ or $\eps X^2 + \eps^{-1} d Y^2$, where $\eps \in \Zp^{\times}$ is a nonsquare.  In the former case, $y$ is in the orbit of $x$; in the latter case,
\beq
y' = \mat{0}{\eps}{-\eps^{-1} d}{0}
\eeq
is in the orbit of $x$.  So there are two orbits, as claimed.  In both cases, $Q$ represents an element of $\Zp^{\times}$, so $\OO^{\times}$ acts transitively on $\OO^{(0,d)}$.

Now take $p=2$.  In this case, there are always exactly two equivalence classes of binary forms of discriminant $4d$, one of which represents $1$ over $\Z_2$ and the other of which does not.  The non-representing forms are:
\begin{itemize}
\item $2X^2 + 2XY + (1/2)(d + 1) Y^2		\hfill  	(d = 3 \mod 4)$
\item $\eps X^2 + \eps^{-1} d Y^2 \hfill (d =1,2 \mod 4)$
\end{itemize}
where, in the latter case, $\eps \in \Z_2^{\times}$ is an element which is not a norm from $\Q_2(\sqrt{-d})^{\times}$.  In case $d=1$ or $2$ mod $4$, we again see that either $y$ or  $y' = \mat{0}{\eps}{-\eps^{-1} d}{0}$ lies in the orbit of $x$, so there are two orbits of $\OO^{(1)}$ on $\OO^{(0,d)}$.  And again $Q$ represents an element of $\Z_2^{\times}$, so some element of $\OO^{\times}$ sends $x$ to $y$.

In case $d = 3$ mod $4$, take
\beq
y' = \mat{1}{-2}{(1/2)(d+1)}{-1}
\eeq
A direct computation shows that the orbit of $x$ under $\OO^{(1)}$ contains either $y$ or $y'$, so again there are two orbits.  In this case, the two orbits are not interchanged by $\OO^{\times}$.  
\end{proof}

\subsection{Proof of Proposition \ref{torsor1}, A  } \label{Gcdproof}

We define a map
\beq
m: \Lambda_{x \ra y} \tensor_{\OO_y}  \Lambda_{y \ra z}  \ra \Lambda_{x \ra z}
\eeq
by sending $\lambda \tensor \mu$ to $\lambda \mu$.  Evidently it preserves
the natural structures of $(\OO_x, \OO_z)$-bimodule on the two sides. We will
show that the image of $m$ -- call it $\Lambda'$ -- coincides
with $\eta \Lambda_{x \ra z}$, for some $\eta \in \OO_x$. This will imply our conclusion, 
because ``multiplication by $\eta^{-1}
$'' is also an isomorphism of $(\OO_x, \OO_z)$-bimodules.

Clearly $\Lambda' \subset \Lambda_{x \ra z}$. 
The $\Z_p$-module $\Lambda_{x \ra y} \tensor_\Z \Z_p$ is explicitly described by Proposition~\ref{pr:localaction}.  
If $p$ is odd, it follows from  from the second case of Proposition~\ref{pr:localaction} that $\Lambda_{x \ra y}, \Lambda_{x \ra z}, \Lambda_{y \ra z}$ all include elements of $\B(\Zp)=\B(\Zz) \tensor_\Z \Z_p$ whose norm lies in $\Zp^{\times}$.  
It follows that $\Lambda'$ also contains such an element. 
If $p=2$, the same conclusion holds unless $d \equiv 3$ mod $4$;
if $d \equiv 3$ mod $4$,  we may conclude only that $\Lambda_{x \ra y}$ contains an element
whose norm has valuation $\leq 1$.

Both $\Lambda' $ and $ \Lambda_{x \ra z}$ are free rank $1$ modules under $\OO_x$; thus, for every $p$, there exists $\eta_p \in \OO_x \otimes \Z_p$ such that
$\Lambda' \otimes \Z_p = \eta_p (\Lambda_{x \ra z} \otimes \Z_p)$. 
In order that both localizations contain an element whose norm belongs to $\Z_p^{\times}$, 
it is necessary that $\eta_p$ belong to $(\OO_x \otimes \Z_p)^{\times}$;
this implies that $\Lambda' \otimes \Z_p = \Lambda_{x \ra z} \otimes \Z_p$. 

  Thus, unless $d \equiv 3$ modulo $8$, 
we indeed have an equality $\Lambda' = \Lambda_{x \ra z}$. 
In the case where $d \equiv 3$ modulo $8$, we may still draw the
conclusion that there exists an ideal $I$ in $\OO_K$, 
divisible only by the prime above $2$, so that $\Lambda' = I . \Lambda_{x \ra z}$. 
This implies the stated conclusion, since the prime of $\Q(\sqrt{-d})$ above $2$ is principal when $d \equiv 3$ modulo $8$.

\subsection{Proof of Proposition \ref{torsor1}, B   }

Let us recall first of all the following: for $x,y \in \mathscr{H}_d$,
\begin{eqnarray*} x \in \SO_3(\Z)^+ y \iff  u x u^{-1} = y, \mbox { some $u \in \B(\Zz), \Nr(u)=1$}. \\
 x \in \SO_3(\Z) y \iff  u x u^{-1} = y, \mbox { some $u \in \B(\Zz), \Nr(u) \in \{1,2\}$}. \end{eqnarray*}
These facts may be verified by direct computation. 

Notice that if $\Lambda_{x \ra y}$ is trivial
there exists a global generator $\lambda$ for $\Lambda_{x \ra y}$ as an $\OO_x$-module.
By the conclusions of the prior paragraph \S \ref{Gcdproof}, the norm of $\lambda$ is a $p$-adic unit
for all $p > 2$.

We separate two cases, as in the prior proof. 

{\em Case 1:  $d = 1,2 (4)$}. Suppose that  $\Lambda_{x \ra y}$ is trivial.
In this case, the norm of $\lambda$ is also a unit at $p=2$. 
Thus, there exists a unit $u \in \B(\Zz)^\times$ such that $uxu^{-1} = y$. 

Conversely, if there exists a unit $u$ with $uxu^{-1} = y$, then $u$ lies in $\Lambda_{x \ra y}$.  The span $u. \OO_x$
is a $\OO_x$-submodule of $\Lambda_{x \ra y}$, and coincides with it by the same reasoning
as used in \S \ref{Gcdproof}. Therefore, $\Lambda_{x \ra y}$ is principal as an $\OO_x$-module.

{\em Case 2: $d = 3 (8)$}.  if $\Lambda_{x \ra y}$ is trivial.
In this case, the $2$-valuation of the norm of $\lambda$ is at most $1$. Therefore, 
$y = \lambda x \lambda^{-1}$ with $\lambda$ an element of $\B(\Zz)$ of norm $1$ or $2$.

Conversely, suppose that $x,y \in \mathscr{H}_d$ are related by an element of $\SO_3(\Z)$.  Equivalently, there exists $\lambda \in \Lambda_{x \ra y}$ with $\Nr(\lambda) \in \{1,2\}$.  If $\Nr(\lambda) = 1$, then $[\Lambda_{x \ra y}]$ is principal, exactly as above.  Suppose $\Nr(\lambda) = 2$.  
Reasoning as before, $\lambda. \OO_x$ is an $\OO_x$-submodule of $\Lambda_{x \ra y}$
of index $\leq 2$; since the ideal above $2$ in $\OO_x$ is principal, 
we deduce again that $[\Lambda_{x \ra y}]$ is trivial.

\subsection{Proof of Proposition \ref{torsor1}, C  }
\label{ss:picokacts}
Write $\widehat{\OO}_K$ for $\OO_K \tensor_\Z \whZ$.  Given  $\alpha \in \widehat{\OO}_K$ everywhere locally invertible, one defines the $\OO_{K}$-ideal $(\alpha) = \alpha \widehat{\OO}_K \cap \OO_K$.  Choose $\alpha$ such that $(\alpha)$ is in the class of $g \in \Pic(\OO_K)$.

Choosing an $x \in \HH_d$, and thus an embedding $\iota_{x}$ of $K \simeq \Q[x]$ in $\B$; using $\iota_{x}$ to identify $\alpha$ with an element of $\Qq[x]\otimes_{\Zz}\whZ$ we consider the left $\B(\Zz)$-ideal  $\B(\Zz)(\alpha)$ and argue as in \S \ref{generatepoints}; this ideal is principal (as $\B(\Zz)$ is euclidean, \S \ref{integralstructure}) so there exists $q \in \B^\times(\Q)$ so that $\B(\Zz) q^{-1} = \B(\Zz) (\alpha)$.  In particular, we have $q =  \alpha^{-1} u$ with $u \in \widehat{\B(\Zz)}^\times$.

Now let $y = q^{-1} x q =u^{-1} \alpha x\alpha^{-1}  u= u^{-1} x u$;  then $y$ lies in $\widehat{\B(\Zz)}$ and in $\B(\Qq)$, so $y \in \Hd$.

 Now \begin{equation} \label{above} \Lambda_{x \ra y} =  \Q[x] q\cap \B(\Zz) \cong \Q[x] \cap \B(\Zz) q^{-1}
 = \Q[x] \cap \B(\Zz) (\alpha) = (\alpha).\end{equation} 
So $[\Lambda_{x \ra y}] = g$, which proves assertion (3).

\subsection{Proof of Proposition \ref{torsor1}, D}  
Notation as in the statement of Proposition \ref{torsor1}D. 
The proof of \ref{torsor1}, B shows that $[\Lambda_{x \ra y}]$ is a prime ideal above $2$. 
It is not principal since $x \neq y$. Since $d \equiv 1,2$ modulo $4$, 
the prime $2$ is ramified in $\Q(\sqrt{-d})$, and therefore this uniquely specifies
$[\Lambda_{x \ra y}]$. 
 
\subsection{The $\Pic(\OO_{K})$-action explicated}\label{ss:explicitaction}
The arguments of \S \ref{ss:picokacts} enables us to describe the action of $\Pic(\OO_K)$ on $\Hdstar$
or $\tildeHd$ quite explicitly.  Let $\ic{p}$ be an ideal of norm $5$ in $\OO_K$; we now verify that the algorithm prescribed in 
\S \ref{explication5} is indeed a lifting of the action of $[\ic{p}]$ from $\Hdstar$ to $\Hd$.

Let $x \in \Hd$, thus giving an embedding of $\iota_{x}$ of $K$ into $\B$. 
Let $\alpha$ be an element of $\widehat{\OO}_K$ which projects to a unit in $\OO_{K_v}$ when $v$ is prime to $\ic{p}$, and to  
a uniformizer in $\OO_{K_\ic{p}}$.  Choose $q \in \B^\times(\Q)$ such that $\B(\Zz) q^{-1} =  \B(\Zz)(\alpha)$; then $q$ has  norm $5^{-1}$.

Set\footnote{The motivation for introducing this may become clearer later; 
the lifting of the action from $\Hdstar$ to $\Hd$ involves imposing additional congruences modulo $3$, and $q_2$ will have better mod $3$ properties. }  $q_2 = q(1+i)^{-1}$, which has norm $1/10$. Taking $y = q_2^{-1}x q_2$, we have, as in \eqref{above}, 
\beq
\Lambda_{x \ra y} \cong \Q[x] \cap  \B(\Zz) (1+i) (\alpha)
\eeq
The right-hand side is an $\OO_x$-submodule of $\ic{p}$ of index $2$. 
If $d$ is $1$ or $2$ mod $4$, this means that $\Lambda_{x \ra y}$ is in the class of $\ic{q}\ic{p}$, where $\ic{q}$ is the ideal of $\OO_x$ lying over $2$.  If $d$ is $3$ mod $4$, we have
\beq
\Q[x] \cap \B(\Zz) (1+i) (\alpha)  = \Q[x] \cap  \B(\Zz) (2 \alpha) \cong \Q[x] \cap  \B(\Zz) (\alpha)
\eeq
so the class of $\Lambda_{x \ra y}$ is just that of $\ic{p}$.
We note, however, that the action of $\ic{p}$ and $\ic{p} \ic{q}$ on $\tildeHd$ is the same,
since the action of $[\ic{q}]$ permutes the fibers of $\Hdstar \rightarrow \tildeHd$.

The integral quaternions of norm $10$ can all be expressed in the form $ru$, where $u \in \B(\Zz)^\times$ and $r$ is an element of 
\be\label{T5quaternions}
\mcA_{5}=\{1 \pm 3i, 1 \pm 3j, 1 \pm 3k\}.
\ee

It follows that, for each $x$ in $\Hd$, the class $[\ic{p}]\tilde x$ in $\tildeHd$ must be represented by $r^{-1} x r$ for some $r\in \mcA_5$.  Now a direct computation shows  that the action of conjugation by the six elements of $\mcA_5$ yields precisely the action of the six matrices appearing in \S \ref{primeaction}.  For example,   
conjugation by $1-3i$ acts on $\B^{(0)}$ via the rule
\begin{align*}
i &\mapsto i,\\
 j &\mapsto \frac{1}{10} (1+3i) j (1-3i) = -\frac{4}{5}j+\frac{3}{5}k,\\
  k&  \mapsto \frac{1}{10} (1+3i) k(1-3i) = -\frac{3}{5}j -\frac{4}{5}k
  \end{align*}
which corresponds to the matrix $A$ in \S \ref{explication5}.

\section{Representations of binary quadratic forms by $x^2 + y^2 + z^2$.}
\label{linniklemma}

We now discuss the proof of Proposition \ref{BasicLemma}, ``Linnik's basic lemma.''
We shall, in fact, discuss two proofs; the first (\S \ref{pallsec}) is simply quoting a result of G. Pall,
which in turn rests on Siegel's mass formula; the second, presented in \S \ref{Pall2proof} 
after a preliminary discussion, is based on ideas related to \S \ref{torsorA}. 
\subsection{A result of Gordon Pall.} \label{pallsec} Let $(Q,\Zz^m)$, $(R,\Zz^n)$ be two non-degenerate integral quadratic forms with $m\geq n$. One says that $Q$ {\em represents} $R$
 if there exists a $\Z$-linear map
$\iota:\Zz^n\ra\Zz^m$ such  that, for any $\bfx\in\Zz^n$, $Q(\iota(\bfx))=R(\bfx)$.  It follows that $\iota$ is an embedding.

In particular, given $\bfx_{1},\bfx_{2}\in\Hd$ with $\bfx_{1}.\bfx_{2}=e$, 
the linear map $$\iota:\Zz^2\ra\Zz\bfx_{1}+\Zz\bfx_{2}$$ defines a representation of the binary quadratic form
 $R(x,y)=dx^2+2e xy+dy^2$ by the ternary form
$Q(x,y,z)=x^2+y^2+z^2$.

Therefore, counting  the number 
 of pairs $(\bfx_{1},\bfx_{2})\in\Hd^2$ with $\bfx_{1}.\bfx_{2}=e$ is essentially equivalent (up to the action of the finite group $\SO_{3}(\Zz)$)
  to counting the number of representations of $R$ by $Q$.
  
The precise computation of the number of embeddings, $r(a,b,c)$ say, of a binary quadratic
form $a x^2 + b x y + c y^2$ into the ternary quadratic $x^2+y^2+z^2$ 
in the {\em most general case} (no primitivity assumptions, etc.) was first carried out by Pall~\cite[Theorem 4, page 359]{Pall}.  His theorem gives a formula

\begin{equation} \label{pallthm} r(a,b,c) = 24 \cdot 2^{\nu} \cdot \prod_{p|2(b^2-4ac)} r_{p}(a,b,c)\end{equation}
where it follows from Pall's result that:
\begin{enumerate}
\item $\nu$ is the number of distinct odd primes dividing the discriminant $b^2-4ac$;
\item $r_{p}(a,b,c)$ is bounded by an absolute constant unless $p^2|(a,b,c)$. 
\end{enumerate}
In particular, it follows that \begin{equation}  \label{pall} |r(a,b,c)| \ll \max(a,b,c)^{\varepsilon},\end{equation} when $(a,b,c)$ has no square factor.

One way to obtain a quantitative result like \eqref{pall} is by use of Siegel's mass formula, combined with a computation of local densities.   We will {\em sketch} in the rest of this section an alternate approach. 
We must first revisit the material of \S \ref{torsorA} and describe a ``different'' (although closely related, as we shall see) connection between $\Hd$ and the class group.

\subsection{The orthogonal complement construction} \label{orthocomplement}
We have seen that $\Hdstar$ can be placed in bijection with the group $\Pic(\OO_K)$; but there is no natural group structure on $\Hdstar$, for there is no natural choice of an identity element of $\Hdstar$. In other words, the torsor structure is natural but admits no natural trivialization. 
However, the orthogonal complement construction, which we shall now explain, gives a trivialization
of the ``square'' 
 $\Hdstar \times_{\Pic(\OO_K)} \Hdstar$. \footnote{The situation is similar to one familiar from geometry:
 If $X$ is a smooth cubic plane curve over a field $k$ with Jacobian $E$, then the points $X(k)$ form a torsor for the group $E(k)$.  This torsor has no canonical trivialization, but the embedding of $X$ into the plane gives a canonical trivialization of the "cube" $X \times_E X \times_E X$. }

Let $\mathcal{Q}_{-d}$ be the set of binary quadratic
forms $aX^2+bXY+cY^2$ of discriminant 
\beq
b^2-4ac = \disc(\order_{K}) = \begin{cases} -d, \ d \equiv 3 \mbox{ mod $4$} \\ -4d, \ d \equiv 1,2 \mbox{ mod $4$} \end{cases}
\eeq
considered up to $\SL_2(\Z)$-equivalence.  Then $\mathcal{Q}_{-d}$ parametrizes rank-$2$ quadratic lattices $\Lambda$ of discriminant $\disc(\order_K) \in \{-d,-4d\}$, endowed with an orientation $\wedge^2 \Lambda \cong \Z$.

There is a natural map
$\Hdstar \ra \mathcal{Q}_{-d}$
sending $x \in \Hd$ to the induced quadratic form on $x^\perp$ (the rank $2$ lattice in $\Zz^3$ of vectors orthogonal to $x$); 
here we understand that, if $d \equiv 3$ modulo $8$, then we scale the resulting quadratic form by $\frac{1}{2}$, and that a basis $(\lambda,\lambda')$ of $x^\perp$ is  oriented if $\lambda\wedge\lambda'\wedge x>0$.

Composing with the classical identification of $\mathcal{Q}_{-d}$ with $\Pic(\OO_K)$ now gives a map
\begin{equation}
\Perp:  \Hdstar \ra \Pic(\OO_K)
\end{equation}
which can be described concretely as $\Perp(x) = [\Lambda_{x \ra \bar{x}}]$.  Both sides admit an action of $\Pic(\OO_K)$; the left-hand side by means of the torsor structure described in section \S~\ref{quaternions}, and the right-hand side by multiplication.  But $\Perp$ is not equivariant for this action; rather, it intertwines the action of $\Pic(\OO_K)$ on the left with the {\em square} of this action on the right, as we will now see.  Suppose given $x, y$ in $\Hd$ such that $[\Lambda_{x \ra y}] = \alpha \in \Pic(\OO_K)$.  One checks that $[\Lambda_{y \ra x}] = [\Lambda_{\bar{x} \ra \bar{y}}]$.  Thus
\beq
\Perp({y}) = 
[\Lambda_{y \ra \bar{y}}] = 
[\Lambda_{y \ra x}] [\Lambda_{x \ra \bar{x}}]  [\Lambda_{\bar{x} \ra \bar{y}}]
= \alpha^2 \Perp(\bar{x}).
\eeq

It follows that the image of $\Perp$ is precisely one coset of $2\Pic(\OO_K)$.
\footnote{It is irresistible to ask {\em which} coset. 
A quadratic form  $Ax^2+Bxy+Cy^2$  of discriminant $d$ embeds, over $\Q$, 
into $\Z^3$ with the standard quadratic form only if $(A, d/A)_p = (d, d)_p$
for every $p$ dividing $2d$; 
here $(a,b)_p$ is the {\em Hilbert symbol}.  This condition in fact
defines a coset of squares, and so describes exactly the image of $\Perp$.}
Moreover, the cardinality of a fiber of $\Perp$ is just the order of the $2$-torsion subgroup of $\Pic(\OO_K)$.

What $\Perp$ supplies is not a trivialization of the torsor $\Hdstar$, but a trivialization of its {\em square} in the group of $\Pic(\OO_K)$-torsors.  The square is a torsor $T$ which can be described explicitly as the set of equivalence classes of pairs $(x,y)$ under the relation $(x,y) = (\alpha x, \alpha^{-1} y)$ for all $\alpha \in \Pic(\OO_K)$.  The action of $\alpha \in \Pic(\OO_K)$ sends $(x,y)$ to $(\alpha x, y)$ (or to $(x,\alpha y)$, which is the same.)  Then the map sending $(x,y)$ to $[\Lambda_{x \ra \bar{y}}]$ is a canonical isomorphism between $T$ and $\Pic(\OO_K)$.  

\subsection{Bounds for representations, revisited.}  \label{Pall2proof}
We now sketch how the ``orthogonal complement construction'' leads us to another proof of Proposition \ref{BasicLemma}. 

For simplicity, we consider only the case where $ b^2-4ac$ 
is of the form $-4d$, with $d$ squarefree.  
Given an embedding
$$\iota: (\Z^2, ax^2+bxy+cy^2) \hookrightarrow (\Z^3,x^2 + y^2 + z^2).$$
 consider the orthocomplement inside $\Z^3$ of $\iota(\Z^2)$.
It is generated by a single vector $\x \in \Z^3$, unique up to sign.  The quadratic lattice $\x^\perp$ is plainly the same as its sublattice $\iota(\Z^2)$ after tensoring with $\Q$; since $d$ is squarefree, the two lattices are in fact identical.  This implies that  $\x. \x =  d$. 

The embedding $\iota$ is determined up to at most $6$ possibilities ($6$ being the maximal number of automorphisms of a positive definite form in rank two)  by $\x$. It suffices,
therefore, to count the number of $\x \in \Hd$ so that the quadratic form induced on $\x^{\perp}$
is isomorphic to $(\Z^2, ax^2+bxy+cy^2)$. 
By \S \ref{orthocomplement},  this is at most $24  |\Pic(\OO_K)[2]|$, where $[2]$ denotes $2$-torsion.  By genus theory we have $|\Pic(\OO_K)[2]| \ll d^{\eps}$, and the bound \eqref{pall} follows.

\part{Adelization} \label{adelization} 
In this part, we interpret, in adelic terms, the various classical arithmetic sets and structures discussed so far --i.e. $\Hd,\ \tildeHd,\ \Pic(\order_{K}),\ \Hdq,\ \tildeHdq$ -- and the various maps between them. This interpretation gives us an alternative approach to the results described in Part \ref{Part:classical} and more importantly, will be key to the proof of Proposition \ref{JLD}. We refer to \cite{Knapp} for a recent gentle introduction to the ad\`elic theory of algebraic groups (in relation with automorphic forms), and to \cite{PSPM9,PR} for more extensive treatments.

We denote by $\A_{f}$ the ring of finite ad\`eles of $\Q$, i.e. the restricted product of $(\Qq_{p})_{p\mathrm{\ prime}}$ with respect to the sequence of maximal compact subgroups $(\Zz_{p})_{p\mathrm{\ prime}}$, and by $\A=\Rr\times\A_{f}$ the ring of 
ad\`eles. We denote by $\whZ=\prod_{p}\Z_{p}\subset \A_{f}$ the maximal compact subgroup of $\A_{f}$; alternatively $\whZ$ is the closure of $\Zz$ in $\A_{f}$. Given $\mathrm{G}$ a $\Qq$-algebraic group with a chosen model over $\Z$, we denote by $\mathrm{G}(\A_{f})$, $\mathrm{G}(\A)$, $\mathrm{G}(\whZ)$ the groups of points of $\mathrm{G}$ in the corresponding rings.

We begin by summarizing the contents of this part (\S \ref{sec:adelicHdq} -- \S \ref{sec:adelicredq})
via  a  commutative diagram:

We set (see \S \ref{sec:genus} below for the definition of $\gen_{\SO_{3}}(\Z^3)$)
\begin{eqnarray*} \mcP & =& \{ (L,\x),\     L \in \gen_{\SO_{3}}(\Z^3),\  \x \in L,\ \ \x.\x = d\}.\\
 \mcP_{(q)} & := & \{(L,\obfx),\     L\in\gen_{\SO_{3}}(\Z^3),\ \obfx\in L/qL,\ \obfx.\obfx\equiv d(\modu q)\}.  \end{eqnarray*}
For the sequel, we need to fix {\em base points} $\bfx_{0}\in\Hd$ and $\obfx_{q}\in\Hdq$; this being done, we define
\begin{eqnarray}\label{Kfdef} K_{f}[q ]&:=& \mathrm{ker}( \SO_3(\whZ) \rightarrow \SO_3(\Z/q\Z) )\\ 
\label{K'fdef}K'_f[q] &:=& \{ g \in \SO_3(\whZ):  g  . \obfx_{q} = \obfx_{q} \}\end{eqnarray}
and denote by $\SO_{\bfx_{0}}$ the stabilizer of $\bfx_{0}$ in $\SO_{3}$. We will show in the following sections how to describe $\tildeHd$ and $\tildeHdq$ adelically, by means of a commutative diagram
\be\label{massive-identificationtilde}
\xymatrix{
\tildeHd\ar[r]^-{\sim}\ar[d]_{\rmr_{q}}&\SO_3(\Q)\bash\P\ar[d]_{\rmr_{q}}&\SO_{\x_0}(\Q) \bash \SO_{\x_0}(\Aaf) / \SO_{\x_0}(\Zhat)\ar[l]_-{\sim}\ar[d]\\
\tildeHdq\ar[r]^-{\sim}&\SO_3(\Q)\bash\mcP_{(q)}&\SO_3(\Q) \bash \SO_3(\Aaf) / K'_f[q]\ar[l]_-{\sim}.
}\ee

We will also explain how to describe $\Hd$ and $\Hdq$ adelically, which is slightly more involved technically (though no different conceptually.)
 
For this we define
\begin{eqnarray*} \Pq &= &\{ (L,\x, \theta ), \ L \in \gen_{\SO_{3}}(\Z^3),\ \x \in L,\ \x.\x = d, \theta: L/3L \simeq (\Z/3\Z)^3\}. \\ 
 \mcP_{(3,q)} & := & \{(L,\obfx,\theta),\     L\in\gen_{\SO_{3}}(\Z^3),\ \obfx\in L/qL,\ \obfx.\obfx\equiv d(\modu q),\\
&& \hskip 7cm\ \theta:L/3L\simeq (\Zz/3\Zz)^3\},
 \end{eqnarray*}
 and, for any integer $a$,  
 \be\label{Kfaqdef}K_{f}[a,q]:=K_{f}[a]\cap K'_{f}[q]\subset K'_{f}[q].
 \ee
Then we will construct a diagram
 \be\label{massive-identification}
\xymatrix{
\Hd\ar[r]^-{\sim}\ar[d]_{\rmr_{q}}&\SO_3(\Q)\bash\Pq\ar[d]_{\rmr_{q}}&\SO_{\x_0}(\Q) \bash \SO_{\x_0}(\Aaf)\times\SO_{3}(\whZ/3\whZ) / \SO_{\x_0}(\Zhat)\ar[l]_-{\sim}\ar[d]\\
\Hdq\ar[r]^-{\sim}\ar@{>>}[d]&\SO_3(\Q)\bash\mcP_{(3,q)}\ar@{>>}[d]&\SO_3(\Q) \bash \SO_3(\Aaf) / K_f[3,q]\ar[l]_-{\sim}\ar@{>>}[d]\\
\tildeHdq\ar[r]^-{\sim}&\SO_3(\Q)\bash\mcP_{(q)}&\SO_3(\Q) \bash \SO_3(\Aaf) / K'_f[q]\ar[l]_-{\sim};
}\ee
where the vertical arrows at the bottom are the evident surjective maps. The
horizontal arrows of these diagrams are described in \S \ref{sec:adelicHdq} and \S \ref{sec:adelicHd},  while the vertical
arrows are discussed in \S \ref{sec:adelicredq}.
 
 \subsection{Convention: orthogonal groups vs. unit group of quaternions}
 We have already noted in \S \ref{quaternions} that the quadratic spaces $(\Qq^3,a^2+b^2+c^2)$, $(\B^{(0)},\Nr)$ are isometric and that the action --by conjugation-- of the units of Hamilton quaternions, $\B^\times$, on $\B^{(0)}$
 induces and isomorphism of $\Qq$-algebraic groups
 $$\PB\simeq \SO_{3}$$
 with $\PB=Z(\B^\times)\bash\B^\times$ the projective group of units. In particular the group of adelic points $\PB(\Aa)$
 and $\SO_{3}(\Aa)$ get naturally identified as are their various respective subgroups. In the sequel, we shall freely use this identification, moreover we will use the same notations for various subgroups $K_{f}[q],\ K'_{f}[q]$ etc. to denote either some subgroup in $\SO_{3}(\Aa)$ or its image in $\PB$ under  the above isomorphism.

\section{Adelic interpretation of $\Hd(q)$.}\label{sec:adelicHdq}

In this section  we identify $\Hd(q)$ (as well as the sphere $S^2$), with an adelic quotient of $\SO_{3}$, 
verifying the second lines of \eqref{massive-identificationtilde} and \eqref{massive-identification}.

\subsection{Adelic actions on rational lattices} 
The adelic group $\GL_{3}(\adele_{f})$ acts on the space of lattices in $\Q^3$ as follows.
If $L \subset \Q^3$ is a lattice, and $g=\prod_{p}g_{p}$ is an element of $\GL_{3}(\adele_{f})$, we define
$$g.L=\{ \alpha \in \Q^3: \iota_p(\alpha) \in {g_{p}.L_{p}} \mbox{ for all $p$} \},$$
where $\iota_p: \Q^3 \hookrightarrow \Q_p^3$ is the inclusion. 
This action is transitive, and the stabilizer of $L_{0}=\Zz^3$ under this action is $\GL_3({\whZ})$.
 
 \subsection{}\label{sec:genus}
 The $\SO_{3}$-{\em genus} of the lattice $L_{0}$ is, by definition, the orbit of $L_{0}$ under the action of   $\SO_{3}(\Aaf) \subset \GL_3(\adele_f)$: in other words, this is the set of all rational lattices which are everywhere locally isometric to $L_{0}$: 
 $$\gen_{\SO_{3}}(L_{0})=\SO_{3}(\Aaf).L_{0}\simeq \SO_{3}(\Aaf)/\SO_3(\whZ). $$
 The set of {\em genus classes} of $L_{0}$ is the set of $\SO_{3}(\Qq)$-orbits in $\gen_{\SO_{3}}(L_{0})$.
 
 \begin{prop*}
There is only one $\SO_{3}$-genus class -- that is, the quadratic form $x^2+y^2+z^2$ has genus one.  Equivalently,
 \begin{equation}
\SO_3(\Aaf) = \SO_3(\Q)\SO_3(\whZ).
\label{eq:genus1}
\end{equation}
\end{prop*}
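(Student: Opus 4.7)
The plan is to deduce this from the class number one property of the Hurwitz quaternions $\B(\Z)$, via the covering $\B^\times \twoheadrightarrow \PB \simeq \SO_3$ of \eqref{Qcovering}. The key input, already invoked in \S \ref{integralstructure}, is that $\B(\Z)$ is Euclidean for the reduced norm, so every right $\B(\Z)$-ideal is principal. Via the standard dictionary between right $\B(\Z)$-ideals in $\B(\Q)$ and the double coset space $\B^\times(\Q) \bs \B^\times(\Aaf) / \B^\times(\whZ)$, this translates adelically into the equality
\be\label{quatcn1}
\B^\times(\Aaf) = \B^\times(\Q) \cdot \B^\times(\whZ).
\ee

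Given $g = (g_p)_p \in \SO_3(\Aaf)$, the first step would be to lift $g$ to an element $\tilde g \in \B^\times(\Aaf)$. Locally at every $p$, Hilbert 90 applied to the central extension $1 \to Z(\B^\times) \to \B^\times \to \PB \to 1$ gives surjectivity of $\B^\times(\Q_p) \twoheadrightarrow \PB(\Q_p) = \SO_3(\Q_p)$. Moreover, for almost all $p$ one has $g_p \in \SO_3(\Z_p)$, and by \S \ref{integralstructure} the map $\B^\times(\Z_p) \to \SO_3(\Z_p)$ is surjective whenever $p \neq 2$; at such places I would pick $\tilde g_p \in \B^\times(\Z_p)$, ensuring that $\tilde g$ lies in the restricted product $\B^\times(\Aaf)$. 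At the remaining finitely many places (possibly including $p=2$) an arbitrary Hilbert-90 preimage in $\B^\times(\Q_p)$ will do.

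Next, applying \eqref{quatcn1}, one writes $\tilde g = \gamma \cdot k$ with $\gamma \in \B^\times(\Q)$ and $k \in \B^\times(\whZ)$. Projecting back to $\PB \simeq \SO_3$ then gives $g = \bar\gamma \cdot \bar k$, with $\bar\gamma \in \SO_3(\Q)$ and $\bar k \in \SO_3(\whZ)$; the latter inclusion holds because $\B^\times(\Z_p)$ acts by conjugation on $\B^{(0)}(\Z_p)$ preserving the reduced norm, and so lands in $\SO_3(\Z_p)$ at each place. This is precisely \eqref{eq:genus1}.

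The only substantive step is \eqref{quatcn1}, which is nothing but the class-number-one property of the Hurwitz quaternions already invoked in \S \ref{integralstructure}; the rest is routine bookkeeping around the isogeny $\B^\times \to \SO_3$ and Hilbert 90. An alternative route via Hasse--Minkowski together with an explicit analysis of lattices in the genus of $x^2+y^2+z^2$ is possible, but less efficient and far less well-adapted to the quaternionic framework of \S \ref{torsorA}.
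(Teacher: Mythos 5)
Your proof is correct, and it rests on exactly the same arithmetic input as the paper's -- the Euclidean property of the Hurwitz order, hence class number one -- but the mechanics are genuinely different. The paper never lifts adelic group elements across the isogeny $\B^\times\to\PB$: it converts the statement into one about lattices, observes that for $L=q_f\B^{(0)}(\Zz)q_f^{-1}$ in the genus the set $\OO=\Zz+L$ is an order of $\B(\Qq)$, conjugates $\OO$ into $\B(\Zz)$ by a global element (using that every order is conjugate into the Hurwitz order), and finishes by a covolume comparison. You instead stay at the level of adelic groups: lift $g\in\SO_3(\Aaf)$ to $\B^\times(\Aaf)$, apply the adelic form of class number one, and project back. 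Your route requires two extra verifications that the paper's sidesteps entirely -- that the lift exists inside the restricted product (which you correctly secure via surjectivity of $\B(\Zz_p)^\times\to\SO_3(\Zz_p)$ for $p\neq 2$ at almost all places, plus Hilbert 90 at the remaining ones), and that the $\whZ$-component projects into $\SO_3(\whZ)$ at $p=2$ despite the non-surjectivity there (which you correctly note only needs containment, not surjectivity). What your version buys is that it is the ``official'' double-coset formulation, transporting class number one for $\B^\times$ directly to the statement for $\SO_3$; what the paper's version buys is that it avoids all questions about $\B^\times(\Aaf)\to\PB(\Aaf)$ and is closer in spirit to the classical genus-theoretic statement. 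Both are valid.
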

\proof Let $L\in\gen_{\SO_{3}}(\Aaf).L_{0}$, in particular the covolume of $L$ equals the covolume of $L_{0}$ which is one. We identify $L_{0}$ with the traceless integral quaternions $\B^{(0)}(\Zz)$ and $\SO_{3}$ with $\PB$; in these terms, we need to show that there is $q\in\B^\times(\Qq)$ such that $qLq^{-1}=\B^{(0)}(\Zz)$. By definition, there is $q_{f}\in\B^\times(\Aaf)$ such that $L=q_{f}\B^{(0)}(\Zz)q_{f}^{-1}$ and so $\OO:=\Zz+L=q_{f}(\Zz+\B^{(0)}(\Zz))q_{f}^{-1}$ is a lattice in $\B(\Qq)$ containing the identity and stable by multiplication, i.e. an order; hence, by \S \ref{integralstructure}, there is $q\in\B^\times(\Qq)$ such that
$q\OO q^{-1}\subset\B(\Zz)$ and so $qLq^{-1}\subset\B^{(0)}(\Zz)$; since $qLq^{-1}$ and $\B^{(0)}(\Zz)$ have the same covolume they are equal.
\qed

If we replace $x^2+y^2+z^2$  by a different ternary quadratic form $Q$, this will in general not be so. 

\subsection{Level structure}\label{levelstructure} By construction, the quadratic form $x^2+y^2+z^2$ takes integral values on any lattice in
$\gen_{\SO_{3}}(L_{0})$. In particular, given $q\geq 1$ an integer, the quotient lattice $L/qL\simeq (\Zz/q\Zz)^3$ is naturally a quadratic space for the form $x^2+y^2+z^2$. {\em A $q$-level structure} on such a lattice is an additional datum related to 
$L/qL$. Here, we will consider two type of level $q$-structures:
\begin{itemize}
\item[-] the principal $q$-structure: this is the datum of an isomorphism of quadratic spaces $\theta: L/qL\simeq (\Zz/q\Zz)^3$. We will use this only for $q=3$ and mainly for cosmetic purposes.
\item[-] a weak $q$-structure: this is the datum of a point $\obfx\in L/qL$ such that $\obfx.\obfx\equiv d(\modu q)$ when $(d,q)=1$. This will be the main structure considered in the present paper.
\end{itemize}

Related to these level structures, are the open compact subgroups of $\SO_3(\whZ)$,
$$K_{f}[q],\ K'_{f}[q],\ K_{f}[3,q]$$ defined by
\refs{Kfdef}, \refs{K'fdef}, \refs{Kfaqdef} relative to the choice of some base point $\obfx_{q} \in \Hd(q)$.

\subsection{$\tildeHd(q)$ as an adelic quotient}

Let us consider first the set of pairs
$$\mcP_{(q)}:=\{(L,\obfx),\ L\in\gen_{\SO_{3}}(L_{0}),\ \obfx\in L/qL,\ \obfx.\obfx\equiv d(\modu q)\}.$$
The group $\SO_{3}(\Qq)$ acts diagonally on $\mcP_{(q)}$. From \eqref{eq:genus1} any $\SO_{3}(\Qq)$-orbit in $\mcP_{(q)}$ contains a pair of the form $(L_{0},\obfx)$ with $\obfx\in\H_{d}(q)$. Moreover if two pairs $(L_{0},\obfx)$
and $(L_{0},\obfx')$ give rise to the same $\SO_{3}(\Qq)$-orbit then $\obfx$ and $\obfx'$ differ by an element of 
$\SO_{3}(\Qq)\cap\SO_{3}(\whZ)=\SO_{3}(\Zz)$. It follows that the map
$\obfx\in\Hd(q)\mapsto (L_{0},\obfx)\in\mcP_{(q)}$ induces a bijection:
$$\SO_3(\Z) \bash \Hd(q) \stackrel{\sim}{\rightarrow}  \SO_3(\Qq) \backslash \mcP_{(q)}.$$
 In the sequel, we will denote by $[L,\obfx]$ the $\SO_{3}(\Qq)$-orbit of the pair $(L,\obfx)$.
  
In fact, since $L/qL\simeq (L\otimes_{\Zz}\whZ)/q(L\otimes_{\Zz}\whZ)$, the whole group $\SO_{3}(\A_{f})$ acts diagonally on $\mcP_{(q)}$. 
\begin{lem*}This action is transitive: fixing $\obfx_{q}\in\Hd(q)$, we have
$$\mcP_{(q)}=\SO_{3}(\A_{f}).(L_{0},\obfx_{q}).$$
\end{lem*}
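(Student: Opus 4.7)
The plan is to reduce the transitivity of $\SO_3(\A_f)$ on $\mcP_{(q)}$ to a purely local question at each prime dividing $q$, and then to invoke Witt's theorem together with Hensel's lemma.

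First I would exploit the genus-one property established in the previous proposition. Given any $(L,\obfx) \in \mcP_{(q)}$, by \eqref{eq:genus1} (or, equivalently, by $\SO_3(\A_f)$-transitivity on the single genus of $L_0$) there exists $g \in \SO_3(\A_f)$ with $g.L_0 = L$. Acting by $g^{-1}$ replaces $(L,\obfx)$ by a pair of the form $(L_0, \obfx')$ with $\obfx' \in L_0/qL_0$ and $\obfx' \cdot \obfx' \equiv d \pmod q$; in other words $\obfx' \in \Hd(q)$. Thus it suffices to prove that the stabilizer of $L_0$, namely $\SO_3(\whZ)$, acts transitively on $\Hd(q)$.

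Next I would reduce the question to primes dividing $q$. Writing $\SO_3(\whZ) = \prod_p \SO_3(\Z_p)$ and $L_0/qL_0 = \prod_{p \mid q} L_0/q_p L_0$ via CRT (where $q_p$ is the $p$-part of $q$), the action of $\SO_3(\whZ)$ on $L_0/qL_0$ factors through its image in $\prod_{p\mid q} \SO_3(\Z_p/q_p\Z_p)$. For primes $p \nmid q$ the factor acts trivially, so the problem decouples into showing, for each prime $p \mid q$, that $\SO_3(\Z_p)$ acts transitively on the set
\[
\{\obfx \in (\Z_p/q_p\Z_p)^3 : \obfx\cdot\obfx \equiv d \pmod{q_p}\}.
\]
Since $(q,30)=1$, every such $p$ is odd and $\geq 7$; in particular $p \nmid 2d$ because $(d,q) = 1$.

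The last step is the local transitivity itself. Here I would first use Hensel's lemma: for $p$ odd with $p \nmid d$ and any $\obfx$ with $\obfx\cdot\obfx \equiv d \pmod{q_p}$, the gradient $2\obfx$ of the quadratic form $x^2+y^2+z^2 - d$ is a unit vector modulo $p$ (otherwise $\obfx \equiv 0 \pmod p$, forcing $d \equiv 0 \pmod p$), so one can lift $\obfx$ to an honest $\x \in \Z_p^3$ with $\x \cdot \x = d$. It therefore suffices to show that $\SO_3(\Z_p)$ acts transitively on $\{\x \in \Z_p^3 : \x \cdot \x = d\}$, which is the content of Witt's extension theorem in its integral form over a local ring with residue characteristic coprime to $2\disc$: any isometry between two primitive vectors of the same norm in the non-degenerate $\Z_p$-lattice $(\Z_p^3, x^2+y^2+z^2)$ extends to a global isometry (equivalently, by smoothness of the orthogonal group scheme, $\SO_3(\Z_p) \twoheadrightarrow \SO_3(\F_p)$ is surjective and the $\F_p$-version of Witt's theorem applies).

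The main obstacle is this last local step: one has to make sure that Witt's theorem is actually available over $\Z_p$ and not only over $\Q_p$ or $\F_p$. The hypothesis $(q,30)=1$ with $(d,q)=1$ is precisely what rules out the delicate primes $p = 2$ and $p \mid d$ where the quadratic space would become singular modulo $p$ and integral Witt-type arguments can fail; with these primes excluded, smoothness of $\SO_3$ over $\Z_p$ together with Hensel lifting reduces everything to the standard Witt theorem over the residue field $\F_p$, which yields the required transitivity.
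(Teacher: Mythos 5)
Your proposal is correct, and its global skeleton is the same as the paper's: reduce to pairs $(L_0,\obfx')$ using the single-genus property, then show that $\SO_3(\whZ)$ acts transitively on $\Hd(q)$, prime by prime. Where you diverge is in the local input. The paper derives the transitivity of $\SO_3(\Z_p)$ from Lemma \ref{transitive}, which in turn rests on the quaternionic orbit analysis of Proposition \ref{pr:localaction} (the classification of $\OO^{(1)}$- and $\OO^{\times}$-orbits on trace-zero norm-$d$ elements of a maximal order); you instead give a self-contained argument via Hensel lifting of a mod-$q_p$ solution to a $\Z_p$-point of the quadric, followed by an integral form of Witt's theorem (smoothness of $\SO_3$ and of the orbit map over $\Z_p$ when $p\nmid 2d$). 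Your route is more elementary and makes explicit a step the paper elides, namely the surjectivity of $\Hd(\Z_p)\to\Hd(\Z/q_p\Z)$, which is needed to pass from the paper's statement about $\whZ$-points to transitivity on $\Hd(q)$; it is, however, confined to the easy primes $p\mid q$ (odd, coprime to $d$), whereas the paper's Proposition \ref{pr:localaction} also handles $p=2$ and $p\mid d$ and is reused elsewhere (e.g.\ in the torsor structure of Proposition \ref{torsor1} and in the strong-approximation argument of \S\ref{graphadelic}), so the quaternionic detour is not wasted effort in the paper's economy. One small caution on your last step: transitivity of $\SO_3(\Ff_p)$ on norm-$d$ vectors together with surjectivity of $\SO_3(\Z_p)\to\SO_3(\Ff_p)$ does not by itself give transitivity over $\Z_p$; one must also smooth out the discrepancy $g_0\x\equiv\y\ (\mathrm{mod}\ p)$ by a Hensel/formal-smoothness argument applied to the orbit map $g\mapsto g\x$ (whose fibers are torsors under the smooth stabilizer $\SO_2$), which is presumably what you intend by citing the integral Witt extension theorem.
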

\proof
Indeed any $(L,\obfx)\in\mcP_{(q)}$ is in the orbit of a pair of the form $(L_{0},\obfx')$ for some $\obfx'\in\Hd(q)$. It follows from the
Lemma below that $\SO_{3}(\whZ)$ (through its projection to $\SO_{3}(\Zz/q\Zz)$) acts transitively on
$\Hd(q)$. Taking $k_{\bfx'} \in \SO_{3}(\whZ)$ such that $k_{\obfx'}\obfx_{q}=\obfx'$, we have $k_{\obfx'}^{-1}(L_{0},\obfx')=(L_{0},\obfx_{q})$.\qed

\begin{lem} \label{transitive} For any prime $p$,
the group $\SO_3(\Z_p)$ acts transitively on $\Hd(\Z_p)$ (defined in the vident way). Consequently, $\SO_{3}(\widehat\Zz)$ acts transitively on 
$\Hd(\widehat\Zz)$. 
\end{lem}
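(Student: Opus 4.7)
My plan is to translate the statement into the quaternionic framework set up in \S \ref{quaternions}--\S \ref{integralstructure} and then read off the conclusion from Proposition \ref{pr:localaction}. Under the isomorphism $\SO_3 \cong \PB$, and the identification $\B^{(0)}(\Z) = \Z^3$ of trace-zero Hurwitz quaternions with $\Z^3$, the set $\Hd(\Z_p)$ becomes $\OO^{(0,d)} := \{x \in \OO : \tr(x) = 0,\ \Nr(x) = d\}$, where $\OO := \B(\Z) \otimes \Z_p$; and $\SO_3(\Z_p)$ becomes the image in $\PB(\Q_p)$ of $\OO^\times$, acting by conjugation. A key ingredient to check is that $\OO$ is genuinely a maximal order of $B := \B \otimes \Q_p$ at every $p$ (this is where the passage from Lipschitz to Hurwitz integers matters at $p = 2$); granting this, transitivity of $\SO_3(\Z_p)$ on $\Hd(\Z_p)$ is equivalent to transitivity of $\OO^\times$-conjugation on $\OO^{(0,d)}$.

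Now I split cases according to the local nature of $B$, using that the Hamilton quaternions $\B$ are ramified precisely at the places $\{2,\infty\}$, so $B \cong M_2(\Q_p)$ for every odd $p$, while $B$ is a division algebra at $p=2$. Our standing assumption that $d$ is squarefree guarantees $\ord_p(d) \leq 1$, so Proposition \ref{pr:localaction} applies directly. At an odd prime $p$ with $p \nmid d$, the first bullet of the split-algebra case gives transitivity of $\OO^{(1)}$, hence of $\OO^\times$. At an odd prime $p$ with $p \mid d$, there are two orbits of $\OO^{(1)}$, but Proposition \ref{pr:localaction} says they are interchanged by $\OO^\times$ (the restriction ``unless $p=2$ and $d \equiv 3 \bmod 4$'' does not trigger at odd $p$), so again the $\OO^\times$-action is transitive. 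At $p = 2$ we are in the division-algebra case, where Proposition \ref{pr:localaction} asserts directly that the special orthogonal group of $\OO^{(0)}$ acts transitively on $\OO^{(0,d)}$; note that the troublesome split-algebra subcase $p=2$, $d\equiv 3(4)$ does not arise here because $B$ is not split at $2$.

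The consequence for $\SO_3(\widehat{\Z})$ acting on $\Hd(\widehat{\Z})$ is then formal: both $\SO_3(\widehat{\Z}) = \prod_p \SO_3(\Z_p)$ and $\Hd(\widehat{\Z}) = \prod_p \Hd(\Z_p)$ factor as products over primes (essentially all factors being the trivially-acted-upon basepoint in a restricted-product sense), so place-by-place transitivity assembles to global transitivity.

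The main (minor) obstacle is really the bookkeeping at $p=2$: one must confirm both that the Hurwitz order is the unique maximal order in the division algebra $\B\otimes\Q_2$ (this is \S \ref{integralstructure}) and that the division-algebra clause of Proposition \ref{pr:localaction} indeed yields transitivity of the \emph{full} unit group (not merely the norm-one units). Beyond this, the argument is a direct quotation of work already done.
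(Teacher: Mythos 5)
Your proof is correct and is essentially the paper's own argument: the paper disposes of the lemma in one line by citing Proposition \ref{pr:localaction}, using the division-algebra case at $p=2$ and the split case at odd $p$, exactly as you do. One small precision for your closing remark: at $p=2$ what is needed (and what the proposition's ``in particular'' clause supplies) is transitivity of the special orthogonal group of the lattice $\OO^{(0)}$, which is strictly larger than the image of $\OO^{\times}$ — conjugation by \emph{any} element of $B^{\times}$ preserves the unique maximal order, so $\SO_3(\Z_2)$ contains elements whose norm lies outside $\Nr(\Q_2[x])$ and these swap the two $\OO^{(1)}$-orbits.
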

\proof This -- which
can be thought of as an analogue of Witt's theorem for rank $3$ lattices over $\Z_p$ -- is immediate from Proposition~\ref{pr:localaction}, using the first case if $p=2$ and the second case if $p$ is odd.\qed

We thus have
$\mcP_{(q)}\simeq \SO_{3}(\A_{f})/K'_{f}[q].$ From the above discussion, we deduce that
 \be\label{adelictildeHdq}
\tildeHd(q) \stackrel{\sim}{\rightarrow}  \SO_3(\Qq) \backslash \mcP_{(q)} \stackrel{\sim}{\leftarrow} \SO_3(\Q) \bash \SO_3(\Aaf) / K'_f[q].
\ee

 \subsubsection{Lifting to $\Hd(q)$}\label{liftingHdq}This adelic realization of $\tildeHd(q)$ may, in fact, be lifted to an adelic realization of $\Hd(q)$ itself, at least when $q$ is coprime with $3$. For this we add the additional data of the principal level $3$-structure discussed in \S \ref{levelstructure}. This is a little bit artificial, relying on the special fact that the reduction modulo $3$ maps yields an isomorphism $\SO_{3}(\Zz)\simeq\SO_{3}(\Z/3\Z)$. 

Consider the set of triples
\begin{multline*}\mcP_{(3,q)}:=\\
\{(L,\obfx,\theta),\ L\in\gen_{\SO_{3}}(L_{0}),\ \obfx\in L/qL,\ \obfx.\obfx\equiv d(q),\ \theta:L/3L\simeq (\Zz/3\Zz)^3\}.
\end{multline*}
As above, $\SO_{3}(\A_{f})$ (hence its subgroup $\SO_{3}(\Qq)$) acts diagonally on $\mcP_{(3,q)}$: explicitly for $g\in\SO_{3}(\A_{f})$, we have
$$g.(L,\obfx,\theta)=(g.L,g.\obfx,\theta\circ g^{-1}).$$
 We consider first the $\SO_{3}(\Qq)$-orbits in $\mcP_{(3,q)}$. From \eqref{eq:genus1} any $\SO_{3}(\Qq)$-orbit contains a triple of the form $(L_{0},\obfx,\theta)$. Moreover, since reduction modulo $3$ is an isomorphism $\SO_{3}(\Zz)\simeq\SO_{3}(\Z/3\Z)$, the map $\obfx\mapsto (\obfx,\mathrm{Id})$ yields a bijection between
$\Hd(q)$ and the set of $\SO_{3}(\Zz)$-orbits of pairs $\{(\obfx,\theta),\  \obfx\in\Hd(q),\ \theta:(\Zz/3\Zz)^3\simeq (\Zz/3\Zz)^3\}$. From this, we deduce that the map
$\obfx\in\Hd(q)\mapsto (L_{0},\obfx,\mathrm{Id})\in\mcP_{(3,q)}$ induces a bijection:
$$
\Hd(q)\stackrel{\sim}{\rightarrow}  \SO_3(\Qq) \backslash \mcP_{(3,q)}. $$

Returning to the action of the whole group $\SO_{3}(\A_{f})$, we have
\begin{lem*}This action is transitive: fixing $\obfx_{q}\in\Hd(q)$, we have
$$\mcP_{(3,q)}=\SO_{3}(\A_{f}).(L_{0},\obfx_{q},\mathrm{Id}).$$
\end{lem*}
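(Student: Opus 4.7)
The plan is to reduce to the transitivity statement for $\mcP_{(q)}$ already established, and then correct the level-$3$ coordinate by acting through the stabilizer of the base point $(L_0,\obfx_q)$.

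Given $(L,\obfx,\theta)\in\mcP_{(3,q)}$, the preceding lemma produces $g_1\in\SO_3(\A_f)$ with $g_1\cdot(L_0,\obfx_q)=(L,\obfx)$. Applying $g_1$ to the base triple yields
$$
g_1\cdot(L_0,\obfx_q,\mathrm{Id})=(L,\obfx,\mathrm{Id}\circ g_1^{-1}),
$$
whose third coordinate generally differs from $\theta$. The goal is therefore to modify $g_1$ to $g_1h$ with $h$ in the stabilizer of $(L_0,\obfx_q)$, namely $h\in K'_f[q]$, so that the third entry becomes $\theta$. Unwinding the action rule $g\cdot\theta=\theta\circ g^{-1}$ shows that $(g_1h)\cdot(L_0,\obfx_q,\mathrm{Id})=(L,\obfx,h^{-1}g_1^{-1})$, so the condition on $h$ is the single mod-$3$ equality $h\equiv g_1^{-1}\theta^{-1}\pmod 3$ as an element of the (special) orthogonal group of $(L_0/3L_0,\Nr)\simeq(\Z/3\Z)^3$.

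The key structural observation is that, because $(q,3)=1$, the stabilizer $K'_f[q]$ imposes no constraint at the prime $3$: it decomposes as
$$
K'_f[q]\;=\;\SO_3(\Z_3)\times\prod_{p\neq 3}(K'_f[q])_p.
$$
Hence the reduction-mod-$3$ map $K'_f[q]\to\SO_3(\Z/3\Z)$ is simply the local projection $\SO_3(\Z_3)\to\SO_3(\Z/3\Z)$. Choosing $h_3\in\SO_3(\Z_3)$ lifting the prescribed element $g_1^{-1}\theta^{-1}$, and setting $h_p=\mathrm{id}$ for $p\neq 3$, then produces $h\in K'_f[q]$ with the required mod-$3$ reduction, and $(g_1h)\cdot(L_0,\obfx_q,\mathrm{Id})=(L,\obfx,\theta)$.

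The only non-formal step is the surjectivity of $\SO_3(\Z_3)\twoheadrightarrow\SO_3(\Z/3\Z)$. This is immediate from smoothness of the $\Z_3$-group scheme $\SO_3$ (Hensel's lemma); alternatively, via the covering $\B^\times\twoheadrightarrow\SO_3$ of \S\ref{quaternions}, it reduces to the surjectivity of $(\B(\Z_3)\otimes)^\times\to(\B\otimes\Z/3\Z)^\times/Z$, which is explicit since the form $x^2+y^2+z^2$ is split at $3$ and $\SO_3$ is locally isomorphic to $\PGL_2$ there. The minor obstacle is really only bookkeeping: fixing the orientation convention for $\theta$ so that the action of $\SO_3(\A_f)$ (rather than $\mathrm{O}_3(\A_f)$) can be transitive on the third factor—a convention that is harmless for the diagram \eqref{massive-identification}.
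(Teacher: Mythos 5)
Your argument is correct and is essentially the paper's own: both proofs exploit the coprimality of $q$ and $3$ so that the correction of $\obfx$ (at the primes dividing $q$, via the transitivity already established for $\mcP_{(q)}$) and the correction of $\theta$ (at the place $3$, via surjectivity of $\SO_3(\Z_3)\to\SO_3(\Z/3\Z)$, which the paper gets for free from the isomorphism $\SO_3(\Z)\simeq\SO_3(\Z/3\Z)$) take place at disjoint sets of places and hence can be performed independently. The orientation caveat you flag is genuine — for $\SO_3$ rather than $\mathrm{O}_3$ to act transitively on the level-$3$ structures, $\theta$ must be restricted to one of the two $\SO_3(\Z/3\Z)$-classes of isometries — but this is a convention the paper itself leaves implicit, and with it in place your stabilizer-correction step goes through exactly as written.
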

\proof The proof is exactly as above: any triple is in the orbit of a triple of the form $(L_{0},\obfx,\theta)$, $\obfx\in\Hd(q)$, 
$\theta:(\Zz/3\Zz)^3\simeq (\Zz/3\Zz)^3$. This follows from the fact that $\prod_{p|q}\SO_{3}(\Zz_{p})\times\SO_{3}(\Zz_{3})$ acts transitively on the set of pairs $(\obfx,\theta)$; recall (\S \ref{notn}) that
we are assuming that $q$ is prime to $3$. 
\qed

We have $\mcP_{(3,q)}\simeq \SO_{3}(\A_{f})/K_{f}[3,q]$. (Recall the notation from \S \ref{levelstructure}). From this and the above discussion, it follows that the map
$$\obfx\in\Hd(q)\mapsto (L_{0},\obfx,\mathrm{Id})\in\mcP_{(3,q)}$$ induces a bijective map:
 \be\label{adelicHdq}
\Hd(q)\stackrel{\sim}{\rightarrow}  \SO_3(\Qq) \backslash \mcP_{(3,q)} \stackrel{\sim}{\leftarrow} \SO_3(\Q) \bash \SO_3(\Aaf) / K_f[3,q].
\ee

\subsection{The sphere as an adelic quotient}

For completeness, we recall the interpretation of sphere $S^2$ as an adelic quotient;
 this will not be used there but it is the way to proceed in order to adapt the proof of Theorems \ref{linnikQ} and \ref{mvQ} to obtain Theorems \ref{linnikinfty} and $\ref{MVinfty}$ or to obtain hybrid equidistribution theorems. Let $\Aa=\Rr\times\Aaf$ denote the full ring of ad\`eles, then
  $\SO_{3}(\Aa)=\SO_{3}(\Rr)\times\SO_{3}(\Aaf)$; hence, by \eqref{eq:genus1} we have the identification
  $$\SO_{3}(\Zz)\bash \SO_{3}(\Rr)\simeq \SO_{3}(\Qq)\bash\SO_{3}(\Aa)/\SO_{3}(\whZ).$$
 Since $\SO_{3}(\Rr)$ acts transitively on $S^2$, we obtain -- choosing some point $x_{\infty}\in S^2$ and  letting
  $$K_{\infty}=\SO_{x_{\infty}}(\Rr)\simeq \SO_{2}(\Rr)$$
--  the identification
 \be\label{adelictildesphere}\SO_{3}(\Zz)\bash S^2\simeq \SO_{3}(\Qq)\bash\SO_{3}(\Aa)/K_{\infty}.\SO_{3}(\whZ).
 \ee
 As in the previous section, we may remove the quotient by $\SO_{3}(\Zz)$ by adding the principal $3$-structure and obtain
 $$\SO_{3}(\Rr)\simeq \SO_{3}(\Qq)\bash\SO_{3}(\Aa)/K_{f}[3].$$
\be\label{adelicsphere}S^2\simeq \SO_{3}(\Qq)\bash\SO_{3}(\Aa)/K_{\infty}.K_{f}[3].
\ee

\section{Adelic interpretation of $\Hd$.} \label{sec:adelicHd}

In this section, we describe the first line of the diagrams \eqref{massive-identificationtilde} and \eqref{massive-identification}. 

The significance of this in the proof is as follows: 
We previously described an action of $\Pic(\OO_K)$ on $\tildeHd$,
and also a {\em lifting} of this action -- at least in the case of a prime ideal above $5$ -- to
$\mathscr{H}_d$.  After the present section, we will understand 
this action (and also the lifted action, although this is only of cosmetic interest)
in terms of ad\`eles. This will be a key tool in the proof of 
Proposition~\ref{Naddles} and Proposition~\ref{JLD}. 

 The presentation of this section
follows that in \cite{EV}, but we emphasize that the material is in essence classical. 

\subsection{}\label{classgroupadelicsection}

As before, let $\P$ be the set of pairs $$\P =\{ (L,\x),\ \x.\x = d,\ \x \in L,\ L \in \gen_{\SO_{3}}(L_{0})\}.$$
By contrast with $\mcP_{(q)}$ or $\mcP_{(3,q)}$, the set $\mcP$ carries no natural action of $\SO_{3}(\A_{f})$; on the other hand,
the group $\SO_3(\Q)$ acts on $\P$ diagonally. By \eqref{eq:genus1}, every $\SO_3(\Q)$-orbit in $\mcP$ contains an element of the form $(L_0,\x)$ (where $\bfx$ belongs to $\Hd$); two such pairs differ by the action of a unique element of $\SO_{3}(\Q)\cap\SO_{3}(\whZ)=\SO_3(\Z)$.  From this is follows that the map $\bfx\in\Hd\mapsto (L_{0},\bfx)\in\mcP$ induces a bijective map
\begin{equation}
\tildeHd \stackrel{\sim}{\rightarrow} \SO_3(\Q)\bash\P.
\label{eq:so3qp}
\end{equation}

We will now realize $\SO_3(\Q)\bash\P$ as an adelic quotient by endowing it with a transitive action of a {\em subgroup} of 
$\SO_{3}(\A_{f})$.

Choose an element $\x_0 \in \Hd$, and let $\SO_{\x_0}$ be its stabilizer in $\SO_3$. 
 Since the quadratic space is $3$-dimensional, $\SO_{\x_0}$ is the special orthogonal group of a quadratic plane (namely $\Qq\x_{0}^\perp$),  and so is a $1$-dimensional torus.

The group $\SO_{\x_{0}}(\A_{f})$ acts (by multiplication on the first coordinate) on the subset of $\mcP$ consisting of pairs of the form $(L,\x_{0})$.  We can extend this action to the whole of $\SO_3(\Q)\bash\P$ as follows.  By Witt's theorem, every $\SO_3(\Q)$-orbit in $\P$ is of the form $[L,\x_0]$, for some $L \in \SO_3(\A_f)L_{0}$.  
Then or any $t\in\SO_{\x_0}(\A_{f})$, we define $$t.[L,\x_{0}]:=[t.L,\x_{0}].$$ This is well defined: if 
$[L,\x_{0}]=[L',\x_{0}]$ then $L$ and $L'$ differ by an element
 $\tau\in\SO_{x_{0}}(\Qq)$; because of the commutativity of $\SO_{\x_{0}}$, $$[t\tau L,\x_{0}]=[\tau tL,\x_{0}]=[tL,\x_{0}].$$
 This action is transitive: consider the orbit of $(L,\x_0)$ where $L=g.L_{0}$ and $g\in\SO_{3}(\A_{f})$. Since $L$ contains $\x_0$,  
 $L_0\otimes \adele_f$ contains $g^{-1} \x_0$; by Lemma~\ref{transitive} there is an element $k$ of $\SO_3(\whZ)$ which sends $\x_0$ to $g^{-1} \x_0$.  In particular, $g k=t$ lies in $\SO_{\x_0}(\A_f)$ and 
 $t.[L_0,\x_{0}] =[gk L_0,\x_{0}]= [L,\x_{0}]$. 
 
 Finally, from the definition of the action it is easy to see that the stabilizer of $[ L_0,\x_{0}]$ is the subgroup $\SO_{\x_0}(\Q)\SO_{\x_0}(\whZ)$ where $\SO_{\x_0}(\whZ)=\SO_{\x_0}(\A_{f})\cap\SO_{3}(\whZ)$.  
Hence \eqref{eq:so3qp} extends to bijections
\begin{equation}
\SO_3(\Z)\bash\Hd \stackrel{\sim}{\rightarrow} \SO_3(\Q)\bash\P \stackrel{\sim}{\leftarrow}
\SO_{\x_0}(\Q) \bash \SO_{\x_0}(\Aaf) / \SO_{\x_0}(\whZ). 
\label{adelictildeHd}
\end{equation}

 \begin{rem*} This setup is very specific to the three dimensional situation, in particular the fact that $\SO_{x_{0}}$ is commutative.   If one studies the question of the representations of an integer $d$ by a higher rank quadratic form $Q$, the quotient $\SO_{Q}(\Zz)\bash\Hd$, when non-empty, still has a description in terms of double cosets of an adelic group (see \cite{EV} for details) but will not carry an {\em action} of an adelic group as it does here.
\end{rem*}
 \subsection{}
We shall now identify $\SO_{\x_0}(\Q) \bash  \SO_{\bfx_0}(\Aaf) / \SO_{\bfx_0}(\whZ)$  with
a quotient of the commutative group $\Pic(\OO_K)$. Therefore, the identification \eqref{adelictildeHd}
may be considered as giving an action of $\Pic(\OO_K)$ on $\tildeHd$. 

For this purpose, it is again most convenient to phrase everything in terms of quaternions via the identifications $\Q^3\simeq\B^{(0)}(\Qq)$ and $\SO_{3}\simeq \PB$.  In particular we view $\x_0$ as a trace-free quaternion.  Now the stabilizer $\SO_{\x_{0}}\simeq\PB_{\x_{0}}$ is the multiplicative group generated by (conjugations by) invertible quaternions of the form $a+b\x_{0}$.  In the previous section, we have defined a transitive action of $\SO_{\bfx_{0}}(\Aaf)\simeq\PB_{\bfx_{0}}(\A_{f})$ on $\tildeHd$. 

Now, the map $a+b\sqrt{-d}\mapsto a+b\x_{0}$ defines via conjugation %
 an isomorphism of $\Qq$-algebraic groups  
\be\label{resKisom}\mathrm{Res}_{K/\Q} \G_m/\G_m\simeq\PB_{\x_{0}};
\ee
 in particular, for any field $F\supset\Qq$,
$\PB_{\x_0}(F) \simeq (F \tensor_\Q K)^{\times} / F^{\times}$.

Thus, denoting by 
 $\adele^{\times}_{K,f}=(\adele_f \otimes K)^{\times}$ the group of finite id\`eles of $K$, \refs{resKisom} defines an action of $\adele^{\times}_{K,f}$ on $\tildeHd$. The subgroups $\A_{f}^\times$ and $K^\times$ act trivially, as does
 $U \subset \adele^{\times}_{K,f}$, the preimage of $\PB_{\x_{0}}(\whZ)$ under \eqref{resKisom}. The maximal compact subgroup $\widehat{\OO}_{K}^\times=(\OO_{K}\otimes_{\Z}\whZ)^\times$ of $\adele^{\times}_{K,f}$ is certainly contained in $U$; therefore, we have a well defined transitive action of 
 \beq
K^{\times} \bash \adele^{\times}_{K,f} / \widehat{\OO}^\times_{K} = \Pic(\OO_K)
\eeq
on $\tildeHd$. This action being transitive, $\tildeHd$ is identified with a quotient of $\Pic(\OO_K)$,
and is in fact a torsor under  the abelian group $
K^{\times} \bash \adele^{\times}_{K,f} /  U$.
A careful study of the inclusion $\widehat{\OO}^\times_{K}\subset U$ at $2$ recovers Proposition~\ref{torsor1}. 

{\em A priori}, the action we have defined depends on $\bfx_0$. We verify independence in \S \ref{sec:indep}.

\subsection{Lifting the action to $\Hd$}\label{liftingactionHd}
\label{removingso3}   As before,  by introducing an extra level $3$-structure, we may replace $\tildeHd$ by its covering $\Hd$ and obtain a lift of the action of $\SO_{\bfx_{0}}(\A_{f})\simeq\PB_{\bfx_{0}}(\A_{f})$ and thus of $\A^\times_{K,f}$ on $\tildeHd$ to an action on $\Hd$. Notice that this latter action is, in general,
not necessarily transitive; nor does it, necessarily, factor through the class group,
but only through a certain ray class group. 

Consider the set of triples
$$\Pq = \{ (L,\x, \theta ), \ L \in \gen_{\SO_{3}}(L_0),\ \x \in L,\ \x.\x = d, \theta: L/3L \simeq (\Z/3\Z)^3\}.$$ 
Using that the reduction mod $3$ map from $\SO_3(\Z)$ to $\SO_3(\Z/3\Z)$ is an isomorphism, we may verify as above that the map
$$\mathscr{H}_d \rightarrow \SO_{3}(\Qq)\bash\Pq, \ \ \x \mapsto [L_{0},\x, \mathrm{Id}]$$
is bijective.

As above, the group $\SO_{\bfx_{0}}(\A_{f})$ acts on $\SO_{3}(\Qq)\bash\Pq$: by Witt's Theorem, every $\SO_3(\Q)$-orbit in $\Pq$ is of the form  $[L, \x_{0}, \theta]$  and we set for $t\in\SO_{\bfx_{0}}(\A_{f})$,
$$t.[L, \x_{0}, \theta]=[t.L,\bfx_{0},\theta\circ t^{-1}];$$
from the previous discussion, this action is well defined. This action, however, is {\em not} transitive: 
 the various $\SO_{\bfx_{0}}(\A_{f})$-orbits are parametrized by the orbits of $\SO_{3}(\Z/3\Z)$ under the action of $\SO_{\bfx_{0}}(\whZ)$. In other words, we have 
a bijection 
\begin{equation} \label{adelicHd} \Hd \stackrel{\sim}{\rightarrow}  \SO_3(\Q)\bash\Pq  \stackrel{\sim}{\leftarrow} \SO_{\x_0}(\Q) \bash\SO_{\x_0}(\adele_f)\! \times \SO_3(\Z/3\Z)\! / \SO_{\x_0}(\whZ),\end{equation}
where $\SO_{\x_0}(\whZ)$ acts diagonally on the product $ \SO_{\x_0}(\adele_f) \times \SO_3(\Z/3\Z).$
Under this identification, the action of $t\in\SO_{\x_0}(\adele_f)$ is the one induced by
$$t.[t',\kappa]=[tt',\kappa],\ (t',\kappa)\in \SO_{\x_0}(\adele_f)\!  \times\!  \SO_3(\Z/3\Z).$$ 
Thus \eqref{adelicHd} gives us the desired way to lift the action of $\A^\times_{K,f}$ to $\Hd$. 

\subsection{Independence w.r.t. $\bfx_{0}$}\label{sec:indep}
Let us see that the above defined actions of $\A^\times_{K,f}$ on $\Hd$, $\tildeHd$ do not depend on the choice of the base point $\bfx_{0}$.

 Let $\bfx'_{0}\in\Hd$ be another point. By Witt's theorem 
$\x'_{0}=\rho.\x_{0}$, for some 
$\rho \in \PB(\Q)$. Then $\PB_{\x_{0}'}=\rho\PB_{\x_{0}}\rho^{-1}$. Let $u=a+b\sqrt{-d}$ be an element of $\A^\times_{K}$ ($a,b\in\A_{f}$), and let $t_{u}$ (resp. $t'_{u}$) denote the corresponding element in $\PB_{\x_{0}}(\A_{f})$ (resp. in $\PB_{\x'_{0}}(\A_{f})$).  Then $t'_{u}=\rho t_{u} \rho^{-1}.$ It will suffice to see that $t_{u}.\bfx_{0}=t'_{u}.\bfx_{0}$, or equivalently
$$t_{u}[L_{0},\bfx_{0},\Id]=t'_{u}[L_{0},\bfx_{0},\Id];$$
the latter expression equals \begin{align*}
t'_{u}[L_{0},\rho^{-1}\bfx'_{0},\Id]&=t'_{u}[\rho L_{0},\bfx'_{0},\rho^{-1}]=[t'_{u}\rho L_{0},\bfx'_{0},\rho^{-1}{t'}_{u}^{-1}]\\
&=[\rho^{-1}t'_{u}\rho L_{0}, \rho^{-1}\bfx'_{0},\rho^{-1}{t'}_{u}^{-1}\rho]\\
&=[t_{u}L_{0},\bfx_{0},t_{u}^{-1}]=t_{u}[L_{0},\bfx_{0},\Id].
\end{align*}
\qed

\section{Adelic interpretation of $\rmr_{q}:\Hd \rightarrow \Hd(q)$.}\label{sec:adelicredq}

In this section, we interpret the reduction maps, $\rmr_{q}$ and $\rmr_{\infty}$ in terms of the adelic quotients from the previous sections.

 Recall that, in \refs{adelictildesphere}, \refs{adelictildeHdq}, \refs{adelictildeHd} (resp. \refs{adelicsphere}, \refs{adelicHdq}, \refs{adelicHd}) we have given adelic identifications of $\widetilde S^2,\ \tildeHdq$ and $\tildeHd$, and of their finite coverings $S^2,\ \Hdq$ and $\Hd$. 
\subsection{}
First, let us recall the identifications
$$\tildeHd\simeq \SO_{3}(\Qq)\bash\mcP,\ \tildeHdq\simeq \SO_{3}(\Qq)\bash\mcP_{(q)}$$ where
$$\SO_{3}(\Qq)\bash\mcP=\{[L,\bfx], L\in\SO_{3}(\A_{f}).L_{0},\ \bfx\in L,\ \bfx.\bfx=d\}$$
$$\SO_{3}(\Qq)\bash\mcP_{(q)}=\{[L,\obfx],\ L\in\SO_{3}(\A_{f}).L_{0},\ \obfx\in L/qL,\ \obfx.\obfx\equiv d(q)\}.$$

The map $\rmr _{q}:\tildeHd \ra \tildeHdq$ is induced by the natural map
$$\bfx\in L\mapsto \obfx\in L/qL$$
which we also denote $\rmr_{q}$.

We now explain how to write $\rmr_q$ in the adelic language.  Let $t$ be an element of $\SO_{\x_0}(\Aaf)$ and $[t]$ its double coset in $\SO_{\x_{0}}(\Qq)\bash\SO_{\x_{0}}(\Aaf)/\SO_{\x_{0}}(\whZ)$.  Recall that $\x_0$ and $\obfx_q$ were basepoints in $\tildeHd$ and $\tildeHdq$ respectively.  We will demonstrate that the reduction map $\rmr_q$, thought of as a map
\beq
\rmr_q: \SO_{\x_{0}}(\Qq)\bash\SO_{\x_{0}}(\Aaf)/\SO_{\x_{0}}(\whZ)
\ra
\SO_{3}(\Qq)\bash\SO_{3}(\Aaf)/K_{f}[q]
\eeq
is the one sending $[t]$ to $[t.k_{\obfx_{0}}]$, where $k_{\obfx_{0}}\in\SO_{3}(\whZ)$ is a fixed element satisfying $k_{\obfx_{0}}.\obfx_{q}\equiv \x_{0}(\modu q)$.

To see this, let $t \in \SO_{\x_0}(\A_f)$ be a representative for one of the double cosets in $\SO_{\x_{0}}(\Qq)\bash\SO_{\x_{0}}(\Aaf)/\SO_{\x_{0}}(\whZ)$.  We may choose $t$ to have integral coordinates at all primes dividing $q$.  Write $\beta = \gamma k$, with $\gamma \in \SO_3(\Q)$ and $k \in \SO_3(\whZ)$.  Then by the definitions of \refs{adelictildeHd} and \refs{adelicHdq}, one finds that the element of $\SO_3(\Z) \bash \Hd$ corresponding to $t$ is $\gamma^{-1} \x_0$, while the element of $\SO_3(\Z) \bash \Hd(q)$ corresponding to $tk_{\obfx_{0}}$ is $k \bar{\x}_0$.  But $\gamma k$ fixes $\x_0$ (whence also $\bar{\x}_0$), so the reduction of $\gamma^{-1} \x_0$ is precisely $k \bar{\x}_0$.

Again, we can lift the situation to $\Hd$: there is a natural map
$$\SO_{\x_{0}}(\Qq)\bash\SO_{\x_{0}}(\Aaf)\! \times\!  \SO(\Z/3\Z) /\SO_{\x_{0}}(\whZ)\mapsto\SO_{3}(\Qq)\bash\SO_{3}(\Aaf)/K_{f}[3,q]$$
which corresponds to the reduction map $\Hd \rightarrow \Hd(q)$.  Explicitly, choosing for base points the triples $(L_{0},\bfx_{0},\Id)$, $(L_{0},\obfx_{q},\Id)$ the map is given by
\be\label{adelicredq}[t,\kappa]\mapsto [tk_{\obfx_{0}}k_{3}]\ee
 where $k_{3}\in\SO_{3}(\Zz_{3})$ is a lift of 
$\kappa\in \SO_{3}(\Zz/3\Zz)$ and $k_{\obfx_{0}}$ is as above but such that its component at the place $3$ is trivial.

\part{Graphs and Expanders }

\section{The graph structure on $\Hd(q)$}\label{sec:Hdqexpander}

We shall now replace the role of the finite ad\`eles $\adele_f$ in the bijection \refs{adelicHdq}
by the much smaller ring $\Q_5$. More precisely, we will show the existence of a bijection:
\begin{equation} \label{treebijections} \Hd(q) \stackrel{\sim}{\leftarrow}   \Gamma_{(3,q)} \bash \SO_3(\Q_5) / K_5
,\end{equation} 
where $\Gamma_{(3,q)}\subset \SO_3(\Q_5)$ is a suitable lattice (i.e. a discrete cofinite subgroup) and $K_5$ is the maximal compact subgroup $\SO_3(\Z_5)$.  We have 
$$\SO_3(\Q_{5})\simeq\PB(\Q_{5})\simeq \PGL_{2}(\Q_{5})$$
(since $\B(\Qq_{5})\simeq M_{2}(\Qq_{5})$), therefore the quotient
$\SO_3(\Q_5) / K_5$ is identified with $$\PGL_{2}(\Q_{5})/\PGL_{2}(\Z_{5})=:\mcT_{5}$$
 which has the structure of an infinite $6$-valent tree (namely,  the {\em Bruhat-Tits} tree of $\PGL_{2}(\Qq_{5})$, cf. \cite{Serretree}). 
 
 The set $\Hd(q)$ has thus a structure of  a finite quotient of $\mcT_{5}$ and we will see that this graph structure coincides with that described in \S \ref{HdqExpanderGraph}.  In particular, the latter is connected. 
 
From this viewpoint, it will be possible to prove Proposition \ref{Naddles} and Proposition \ref{JLD} (i.e. the graph $\Hdq$ is an expander). The latter relies on the theory of automorphic forms, especially the Jacquet-Langlands correspondence and the work of Eichler-Shimura. 

Let us mention that a good part of this section is closely related to the book of Lubotzky \cite{Lubot}, especially Chapter 6 and the Appendix, in which the reader will find a motivated discussion of the passage between automorphic forms and expanders.

\subsection{$\Hd(q)$ as a quotient of a tree}\label{graphadelic}

 If $w$ is an element of $\PB(\Q_5)$, we denote by $[w]_5$ the element of $\PB(\adele_f)$ which projects to $w$ at the $5$-adic place, and to  the identity everywhere else.  When no confusion is likely (as in the statement of the following lemma) we will identify $\PB(\Q_5)$ with the subgroup $[\PB(\Q_5)]_5$ of $\PB(\Aaf)$.

\begin{lem*}
\label {le:approximation} One has
\beq
\PB(\Qq).\PB(\Qq_{5}).K_f[3,q]=\PB(\Aaf).
\eeq
Consequently, the map $g\in\PB(\Q_{5})\ra [g]_{5}\in\PB(\A_{f})$ yields a bijective map
\be\label{adelicHdqtree}\Gamma_{(3,q)}\bash \PB(\Qq_{5})/K_{5}\stackrel{\sim}{\ra} \PB(\Q)\bash\PB(\Aaf)/K_{f}[3,q]\stackrel{\sim}{\leftarrow}\Hdq\ee
where $\Gamma_{(3,q)}$ is the lattice $\PB(\Qq)\cap \PB(\Q_{5}).K_{f}[3,q]$.
\end{lem*}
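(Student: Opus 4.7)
My plan is to first reduce the lemma to the strong-approximation identity
\begin{equation*}
\PB(\Qq)\cdot\PB(\Qq_5)\cdot K_f[3,q]=\PB(\Aaf), \qquad (\star)
\end{equation*}
and then to establish $(\star)$ using strong approximation for the simply connected cover of $\PB$. The claimed bijection is a routine consequence of $(\star)$: the map $h\mapsto[[h]_5]$ factors through $\Gamma_{(3,q)}\bash\PB(\Qq_5)/K_5$ because the $5$-component of $K_f[3,q]$ is exactly $K_5=\PB(\Zz_5)$ (using $(q,5)=1$, which holds since $(q,30)=1$, and the fact that the level-$3$ structure is vacuous at the place $5$). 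Surjectivity is immediate from $(\star)$. For injectivity, if $[h']_5=\gamma\cdot[h]_5\cdot k$ with $\gamma\in\PB(\Qq)$ and $k\in K_f[3,q]$, comparing coordinates at each $p\neq 5$ forces $\gamma_p=k_p^{-1}\in K_f[3,q]_p$, so $\gamma\in\Gamma_{(3,q)}$; the $5$-coordinate then reads $h'=\gamma_5\cdot h\cdot k_5$ with $k_5\in K_5$, giving the desired equivalence. The identification $\Hdq\simeq\PB(\Qq)\bash\PB(\Aaf)/K_f[3,q]$ was already established in the previous section.

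For $(\star)$, the key input is strong approximation for $\B^{1}\simeq\Spin_3$: since $\B\otimes\Qq_5\cong M_2(\Qq_5)$, we have $\B^{1}(\Qq_5)\cong\SL_2(\Qq_5)$, which is non-compact, so the Kneser--Platonov theorem gives $\B^{1}(\Qq)\cdot\B^{1}(\Qq_5)\cdot V=\B^{1}(\Aaf)$ for every compact-open subgroup $V\subset\B^{1}(\Aaf)$. I will combine this with the fact that the Hurwitz order $\B(\Zz)$ has class number one, which yields $\B^\times(\Aaf)=\B^\times(\Qq)\cdot\B(\whZ)^\times$ and hence $\PB(\Aaf)=\PB(\Qq)\cdot\PB(\whZ)$. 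Concretely, given $g\in\PB(\Aaf)$, I would first apply class number one to write $g=\gamma\cdot u$ with $\gamma\in\PB(\Qq)$ and $u\in\PB(\whZ)$, then split $u=[u_5]_5\cdot u^{(5)}$ with $u_5\in\PB(\Zz_5)\subset\PB(\Qq_5)$ and $u^{(5)}\in\PB(\whZ^{(5)})$, and finally use strong approximation for $\B^{1}$ to absorb $u^{(5)}$ modulo $K_f[3,q]$ into a further rational left multiplication.

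The main obstacle will be transporting strong approximation from the simply connected group $\B^{1}$ to the adjoint group $\PB$, since the isogeny $\pi\colon\B^{1}\to\PB$ is not surjective on $\Qq$-points: the cokernel sits in $H^{1}(\Qq,\mu_2)=\Qq^\times/(\Qq^\times)^2$ (the spinor norm). I would bridge this gap using the surjectivity of the reduced norm $\Nr\colon\B^\times(\Qq)\twoheadrightarrow\Qq^\times_{>0}$ (Hasse--Schilling--Maass) together with the freedom to rescale by central elements $\Qq_5^\times\subset\B^\times(\Qq_5)$ at the place $5$; this lets one adjust any rational lift so that the spinor-norm obstruction can be absorbed into the allowed $\PB(\Qq_5)$-factor of $(\star)$.
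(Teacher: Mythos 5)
Your reduction of the bijection to the decomposition $(\star)$, and the first half of your proof of $(\star)$ --- strong approximation for the simply connected cover $\B^{1}$ together with the class-number-one property of the Hurwitz order, i.e.\ $\PB(\Aaf)=\PB(\Qq)\PB(\whZ)$ --- are correct and coincide with the paper's argument. The gap is in the final step: passing from the image $\Theta$ of $\B^{1}(\Aaf)$ in $\PB(\Aaf)$ to all of $\PB(\Aaf)$. First, ``rescaling by central elements $\Qq_5^\times\subset\B^\times(\Qq_5)$'' accomplishes nothing, since central elements act trivially on $\PB$. More seriously, the spinor-norm obstruction is not located at the place $5$ and therefore cannot be ``absorbed into the allowed $\PB(\Qq_5)$-factor'': the element $[h]_5$ has trivial spinor norm at every place other than $5$, whereas the obstruction to writing $u\in\PB(\whZ)$ as an element of $\PB(\Qq)\,\Theta\,K_f[3,q]$ is supported at the primes $p\mid 3q$, exactly where $K_f[3,q]_p$ is a proper subgroup of $\SO_3(\Zz_p)$. (At $p=3$, for example, any lift to $\B(\Zz_3)^\times$ of an element of $K_f[3]_3$ is central modulo $3$ and hence has norm in $(\Zz_3^\times)^2$, so the spinor norms of $K_f[3]_3$ are trivial modulo squares; a nonsquare unit class at $3$ can only be produced by the rational factor.) Whether a positive rational exists with the prescribed square classes at all $p\mid 3q$ and with unit-square behaviour at every other finite place is precisely the one-spinor-genus computation; Hasse--Schilling--Maass surjectivity of the reduced norm alone does not give it, and carrying it out at $p\mid q$ forces you to compute the spinor norms of the stabilizer $K'_f[q]_p$ of a point of $\Hd(\Zz_p)$ --- that is, you are driven back to a local analysis that your sketch omits.

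The paper sidesteps this global reciprocity argument. Granting $\PB(\Aaf)=\PB(\Qq)\PB(\whZ)$, the identity $\PB(\Qq)\,\Theta\,K_f[3,q]=\PB(\Aaf)$ reduces to $(\Theta\cap\PB(\whZ))\,K_f[3,q]=\PB(\whZ)$, i.e.\ to the transitivity of $\Theta\cap\PB(\whZ)$ on $\PB(\whZ)/K_f[3,q]\simeq\Hd(q)$; this is a purely local question at the primes $p\mid 3q$, settled by Proposition~\ref{pr:localaction}: for odd $p$ with $p\nmid 2d$ the norm-one units $\OO^{(1)}=\B^{1}(\Zz_p)$ already act transitively on the local sphere $\OO^{(0,d)}$. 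You should replace your spinor-norm paragraph by this local transitivity argument (your treatment of the bijection itself, given $(\star)$, can stand as written).
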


\begin{proof} This main ingredient of the proof is the so-called {\em strong approximation property} (for simply connected semi-simple algebraic groups): we will not discuss this property in any generality and refer to \cite[Chap. 7]{PR} for a complete treatment. Alternatively, the reader may also refer to \cite[Chap. III ]{Vigneras} for a discussion of the strong approximation property in the context of quaternion algebras. For now, let us merely say that, if $\PB$ satisfied the strong approximation property the assertion would follow immediately; unfortunately, $\PB$ is not simply connected.

One remedies this problem by passing from the $\Qq$-algebraic group to its double cover $\B^{1}$ (cf. \refs{Qcovering}). The group $\B^{1}$ is simply connected and semisimple. Moreover, $\B^{1}(\Q_5)$ is noncompact.   Thus, it satisfies a strong approximation property:  for any open compact $\Omega\subset \B^{1}(\Aaf)$, we have 
\beq
\B^{1}(\Qq).\B^{1}(\Qq_{5}).\Omega=\B^{1}(\Aaf).
\eeq

It follows that $\PB(\Qq).\PB(\Qq_{5}).K_f[3,q]$
contains the image $$\Theta=\B^{1}(\adele_f)/\{\pm 1\}$$ of $\B^{1}(\adele_f)$ in $\PB(\adele_f)$. 
It will suffice, then, to verify that 
\begin{equation} \label{Theta-equality} \PB(\Q). \Theta. K_{f}[3,q] = \PB(\adele_f); \end{equation}
this will follow from \eqref{eq:genus1} if $(\Theta \cap \PB(\whZ)) . K_f[3,q] = \PB(\whZ)$; equivalently,
if $\Theta \cap \PB(\whZ)$ acts transitively on $\PB(\whZ)/K_{f}[3,q]\simeq \Hd(q)$.

In turn, it is equivalent to show that $\Theta_{p} \cap \PB(\Z_p)$ acts transitively on $\Hd(\Z_p)$ for each $p | 3q$, where
$\Theta_{p}$ is the image of $\B^{1}(\Qq_{p})$ in $\PB(\Q_p)$. 

Recall that $\Hd(\Zz_{p})$ is identified with $\B^{(0,d)}(\Zz_{p})$, the trace $0$ quaternion of norm $d\in\Zz_{p}^\times$; then $\Theta_{p} \cap \PB(\Z_p)$ contains $\B^{1}(\Zz_{p})/\{\pm 1\}$ acting by conjugation.  The transitivity of this action follows from the second part of Proposition~\ref{pr:localaction},  using the fact that $(p,2d)=1$.
 \end{proof}

 \subsection{The graph structure}\label{graphstructure}
We can describe the (Bruhat-Tits) graph structure on $\PB(\Q_5)/\PB(\Zz_5)$ in two equivalent ways. 
We write $K_5 = \PB(\Zz_5)$ in what follows.  Recall that $\mcA_5$ consists of a set of six matrices (as in \S \ref{primeaction}) or six quaternions (as in \S \ref{ss:explicitaction}) depending on whether we are working in $\SO_{3}$ or $\PB$.

 \begin{enumerate} \item   Fix $\alpha_{5}\in\B^\times(\Q_{5})$ such that the $5$-adic valuation of its norm is $\pm 1$.  Write $\overline{\alpha_5} $ for its image in $\PB(\Q_{5})$; then
we have \be\label{5adicrepresentative} K_5.\overline{\alpha_{5}}.K_5=\bigsqcup_{w \in \mcA_5}[w]_5 K_{5}.\ee
 We join any coset $gK_5$ to the six cosets $g [w]_{5}K_{5}: w \in \mcA_5 $.  The resulting structure is independent of $\alpha_5$. 
 \item More intrinsically, we may identify the quotient  $\SO_{3}(\Q_5)/\SO_{3}(\Zz_5)$
 with the sublattices of $\Q_5^3$ in the orbit of $\SO_{3}(\Q_5) . \Zz_5^3$.
 Given any such sublattice $L$, the quadratic form takes values on $\Zz_{5}$ on that lattice and the induced quadratic form on $L/5L \cong (\Z/5\Z)^3$ takes values
 in $\Z/5\Z$; there are precisely six ($= | \mathbb{P}^1(\Z/5\Z)|$)  isotropic lines. 
 Choosing any $\mathbf{v} \in L$ so that the image of $\mathbf{v}$ in $L/5L$ isotropic (but non-zero),
 we may construct the new lattice:
 $$L' = \langle \mathbf{v}/5 \rangle + \{ \mathbf{w} \in L: \mathbf{v} . \mathbf{w} \equiv 0 \mbox{ mod $5$}.\}$$
 Then $L'$ depends only on the line spanned by $\mathbf{v}$ in $L/5L$, and belongs
 to $\SO_{3}(\Q_5) .  \Zz_5^3$ also. In particular, 
 we construct six such $L'$, which we declare to be the neighbours of $L$. 
 \end{enumerate} 

Then these give equivalent graph structures and moreover:
\begin{equation} \mbox{The resulting graph is a tree, i.e., has no cycles.} \end{equation}

Later on we shall use the following property the verification of which we leave to the reader: if $L$ is a sublattice of $\Q_5^3$ representing an element of $\SO_{3}(\Q_5)/\SO_{3}(\Zz_5)$, and $\mathbf{v} \in L$ is such that
$\mathbf{v} . \mathbf{v}$ is not zero mod $5$, then:
\begin{equation} \label{geod} \mbox{The orbit $\SO_{\mathbf{v}}(\Q_5) . L$ is an infinite geodesic in the tree,} \end{equation}
(i.e. an isometric  embedding of $\Zz$ in the tree w.r.t. the obvious metrics); this is a simple consequence of the second definition.

 Let us now check that the neighbors of $\obfx\in\Hdq$ for the graph structure  defined in \S \ref{HdqExpanderGraph} correspond to the neighbors under $T_{5}$ of the image of $\obfx$ in \refs{adelicHdq}. Thus, the graph structure on $\Hd(q)$ coincides with the graph structure on $\Gamma_{(3,q)}\bash\mcT_{5}$.

 In the notations of \S \ref{liftingHdq}, the neighbors of $\obfx$ i.e. $\{w.\obfx,\ w\in\mcA_{5}\}$ correspond to the orbits of the triples $$\{[L_{0},w.\obfx,\Id],\ w\in\mcA_{5}\}=\{[w^{-1}L_{0},\obfx,\Id\circ w],\ w\in\mcA_{5}\};$$
now for any $p\not=5$ and $w\in\mcA_{5}$, $[w]_p \in K_{p}$. Moreover, $[w]_3\equiv \Id(\modu 3)$. Therefore the above set equals
$\{[w^{-1}]_{5}.[L_{0},\obfx,\Id],\ w\in\mcA_{5}\}$ which manifestly agrees with
the graph structure introduced above. 

  \subsection{Proof of Proposition \ref{Naddles}}\label{Naddlesproof} 

We focus now on the action of a prime ideal $\mfp$ above $5$ on $\tildeHd$. 
Fix  $\pi$ a uniformizer of $K_{\mfp}$, which we write in the form $a+b\sqrt{-d}\in\Z_{5}[\sqrt{-d}]$. To  any $\bfx\in\Hd$ we associate the quaternion $q_{\bfx}=a+b\bfx$ (the $5$-adic valuation of $\Nr(q_{\bfx})$ equals $1$) and the corresponding rotation $t_{\bfx}\in\SO_{\bfx}(\Q_{5})$ induced by conjugation by $q_{\bfx}$; since $K_{\mfp}^\times=\pi^\Zz\OO_{K_{\mfp}}^\times$
 we have $\SO_{\bfx}(\Q_{5})=t_{\bfx}^\Z\SO_{\bfx}(\Z_{5})$.
 
 The action of $\mfp$ can be interpreted in terms of our adelic viewpoint:
We have seen in \S \ref{liftingactionHd}, that the group $\SO_{\bfx}(\Q_{5})$ acts on $\Hd$ with $\SO_{\bfx}(\Z_{5})$ acting trivially; by projection this also defines an action on $\tildeHd$. Via the map $\pi\mapsto q_{\bfx}\mapsto t_{\bfx}$ one obtains an action of the group $\pi^\Z$ which in fact does not depend on the choice of $\bfx$ (\S \ref{sec:indep}); the action of $\pi$ coincides with the action of the ideal class $[\mfp]$ defined earlier.

Let us also recall that if $\bfx\in\Hd$ corresponds to the class $[L_{0},\bfx,\Id]\in\SO_{3}(\Q)\bash\Pq$, the element $\pi.\bfx \in \Hd$ corresponds to the class $t_{\bfx}[L_{0},\bfx,\Id]=[t_{\bfx}L_{0},\bfx,\Id \circ t_{\bfx}^{-1}] = [t_{\bfx}L_{0},\bfx,\Id]$, the final equality holding because the $3$-component of $t_{\bfx}$ is trivial. Therefore the trajectory $\pi^\Z.\bfx$ is described by the infinite sequence of lattices
$$\dots, L_{-2}, L_{-1}, L_0, L_1, L_{2},\dots,\ L_{i}=t^{i}_{\bfx}L_{0}.$$
All the $L_i$ contain $\bfx$.  Write $L_{i,p}$ for $L_{i,p}=L_{i}\otimes_{\Z}\Z_{p}$; then $L_{i,p} = L_{0,p}$ for all $i$ and all $p \neq 5$, the $p$-th component of $t_{\x}$ being trivial.  So the sequence of lattices $(L_i)$ is completely determined by the sequence $(L_{i,5})$ of its $5$-adic components. 

Since the rotation $t_{\bfx}$ comes from a quaternion whose norm has $5$-adic valuation equal to $1$, the sequence $(L_{i,5})_{i\in\Zz}$ describes an infinite geodesic passing through $L_{0,5}$ in the tree (cf. \ref{graphstructure} (1) or more generally \eqref{geod}),
$$\SO_{3}(\Q_{5}).L_{0,5}\simeq\SO_{3}(\Q_{5})/\SO_{3}(\Z_{5}).$$
Up to orientation, any such geodesic may be encoded by an infinite non-backtracking word in 
$$\mcA_{5}=\{A^{\pm 1}, B^{\pm 1}, C^{\pm 1}\},$$
where the $i$-th letter connects the $(i-1)$-st element of the geodesic along an edge of the tree to the $i$-th element. In the present case, the word associated with the sequence $(L_{i,5})_{i\in\Zz}$ is the word $(w^{-1}_{i})_{i\in\Z}$ satifying $L_{i,5}=w^{-1}_{i}L_{i-1,5}$; equivalently, $(w_{i})_{i\in\Z}$ is the word corresponding to the trajectory of $\bfx$ defined in \S \ref{primeaction}.

Suppose that two points $\x, \x' \in \Hd$ give rise to the same truncated word of length $2\ell$:
$$W^{(\ell)}: [-\ell+1, \ell] \rightarrow \{A^{\pm 1}, B^{\pm 1}, C^{\pm 1}\}=\mcA_{5}.$$
This means exactly that the geodesics $\SO_{\x}(\Qq_{5}).L_{0,5}$ and $\SO_{\x'}(\Qq_{5}).L_{0,5}$ in the Bruhat-Tits tree coincide  ``from times $-\ell$ to $\ell$ '' or in other terms
$$L_{i,5}=L'_{i,5},\ i=-\ell,\dots,\ell.$$
In particular $\bfx'\in L_{\ell,5}\cap L_{-\ell,5}$. The last intersection is a sublattice of $L_{0,5}$ of index $5^{2\ell}$; more precisely, it is the preimage in $L_{0,5}$ of the line generated by $\bfx (\modu 5^\ell)$ in $L_{0,5}/5^\ell L_{0,5}$.  Thus $\x$ and $\x'$ are linearly dependent in $L_0 / 5^\ell L_0$, and since both vectors have norm $d$,  we have $\x \equiv \pm \x'$ modulo $5^\ell$. This concludes the proof of Proposition \ref{Naddles}.  \qed

\subsection{Proof of Proposition \ref{JLD}}\label{JLDproof} 
We assume some familiarity with the theory of automorphic forms; in any case, we refer to Lubotzky's book \cite[Chap. 6 \& Appendix]{Lubot}.

 The space $L^2(\PB(\Q)\bash\PB(\A_{f})/K_{f}[3,q])$
admits an orthogonal decomposition into eigenspaces of the commutative algebra generated by Hecke operators. These eigenspaces are the set of $\PB(\Rr)K_{f}[3,q]$-invariant vectors of automorphic representations on $\PB$. Such representations are 
of two types:
\begin{itemize}
\item[-]
one-dimensional representations;
\item[-]   infinite dimensional representations. 
\end{itemize}

The latter corresponds, via the Jacquet-Langlands correspondence \cite{JL} to automorphic representations of $\PGL_{2}$ which are discrete series of weight $2$ and unramified outside $2,3$ and the primes dividing $q$ (more precisely, one can check that the conductor divides $18.q^2$). From the work of Deligne (or rather Eichler/Igusa/Shimura since this is weight $2$), the eigenvalue of the
standard $5$-Hecke operator for such spaces is bounded in absolute value by $2\sqrt 5$.

As for the former: each such is the representation of $\PB(\Aa)$ on the one-dimensional  subspace
 generated by the function
$$g\in\PB(\Q)\bash\PB(\A)\mapsto \chi( \Nr(g))$$
where $\chi:\Qq^\times\bash\Aa^\times\mapsto \{\pm 1\}$  is some quadratic character. The action of $\PB(\Qq)$ on such a representation is trivial, as is the action of $K_f[3,q]$  (by definition); moreover since the elements of $\Theta$ come from quaternions of  norm $1$, the action of  $\Theta$ is trivial as well; hence from \eqref{Theta-equality}, such representation has to be the trivial one. It follows from this enumeration that $-6$ does not occur as an eigenvalue of the $5$-th Hecke operator so the graph, while connected (cf. above) is not bipartite. 

\begin{rem*} The discussion above is also valid for $\tildeHdq$: this follows immediately from the previous discussions by projection. In particular we have
$$\Gamma_{(q)}\bash \PB(\Qq_{5})/K_{5}\stackrel{\sim}{\ra} \PB(\Q)\bash\PB(\Aaf)/K'_{f}[q]\stackrel{\sim}{\leftarrow} \tildeHdq$$
with $\Gamma_{(q)}=\PB(\Qq)\cap K'_{f}[q]\PB(\Q_{5})$.
\end{rem*}

\section{Expander graphs and random walks} \label{expander}

The contents of this section follow lecture notes of Hoori, Linial and Wigderson \cite{HLW}.
Our goal is to prove Proposition \ref{chernoff}. 

\subsection{}

Let $\mathscr{G} = (V,E)$ be a (possibly directed) $d$-regular graph on $|V| =n$ vertices, i.e.
the number of incoming edges to each vertex is $d$, and the number of outgoing edges is also $d$.  We assume $d > 2$. 
The normalized adjacency matrix $T$ of $\mathscr{G}$ acts on $L^2(V)$ by
$$T f (x) = \frac{1}{d} \sum_{(x \mapsto y) \in E } f(y). $$
By an abuse of notation, we will use $L^2(\mathscr{G})$ and $L^2(V)$ interchangeably.  More generally, $\mathscr{G}$ may be allowed to have multiple edges and loops, in which case we modify the definition of $T$ in the evident way.

Let $\|T\|$ be the operator norm of $T$ acting on the orthogonal complement of the constants
in $L^2(V)$.  The graph $\mathscr{G}$ is said to be an $\alpha$-expander, for some
$\alpha < 1$, if $\|T\| \leq 1-\alpha$. In rough terms, the smaller $\|T\|$ is, the more ``strongly connected" the graph $\mathscr{G}$. 

When we speak of a ``random walk on $\mathscr{G}$,'' we mean that we select
a vertex uniformly and randomly from $V$, and then proceed to walk along directed edges, 
at each stage choosing one of the adjacent edges one with each choice assigned probability $1/d$.

\begin{lem}
Let $Q_1, \dots, Q_{\ell}$ be subsets of $V$, with densities $\mu_i := |Q_i|/n$.  The probability that a random walk on $\mathscr{G}$ is in $Q_j$ at step $j$, for all $1 \leq j \leq \ell$, is at most
$$\prod_{i=1}^{\ell-1} \left(\sqrt{\mu_i \mu_{i+1}} + \|T\|\right).$$
\label{qi}
\end{lem}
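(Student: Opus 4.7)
The plan is to encode the path-probability as an $L^2$ inner product and reduce the lemma to bounding a product of operator norms on $L^2(V)$. Write $P_i$ for multiplication by $\mathbf{1}_{Q_i}$ (the orthogonal projection onto $L^2(Q_i)$) and let $u = \mathbf{1}/n$ be the uniform distribution. Expanding the event $\{X_j \in Q_j \text{ for } 1 \le j \le \ell\}$ as a sum over paths weighted by $u(x_1)\prod T(x_j,x_{j+1})$ gives
$$\Pr[X_j \in Q_j,\ 1 \le j \le \ell] \;=\; \langle \mathbf{1},\ P_\ell T^{*} P_{\ell-1} T^{*} \cdots T^{*} P_1 u \rangle,$$
where $T^{*}$ is the adjoint of $T$ (equal to $T$ in the undirected case). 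Since $\|\mathbf{1}\|\cdot\|u\| = 1$, Cauchy--Schwarz reduces the problem to bounding the operator norm of $\mathcal{A} := P_\ell T^{*} P_{\ell-1} T^{*} \cdots T^{*} P_1$. Using the idempotence $P_i^2 = P_i$, we factor
$$\mathcal{A} \;=\; (P_\ell T^{*} P_{\ell-1})\,(P_{\ell-1} T^{*} P_{\ell-2}) \cdots (P_2 T^{*} P_1),$$
so by submultiplicativity it suffices to show $\|P_{i+1} T^{*} P_i\|_{\mathrm{op}} \le \sqrt{\mu_i \mu_{i+1}} + \|T\|$ for each $i$.

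The core step is to split $T^{*}$ into a rank-one ``mean'' part and a contraction on the orthogonal complement of constants. Since $\mathscr{G}$ is regular we have $T\mathbf{1} = \mathbf{1}$ and $T^{*}\mathbf{1} = \mathbf{1}$; the orthogonal projection $J := \tfrac{1}{n}\mathbf{1}\mathbf{1}^{*}$ onto constants therefore satisfies $T^{*} - J = 0$ on $\mathbb{C}\mathbf{1}$ and equals $T^{*}$ on $(\mathbf{1})^\perp$, so that $\|T^{*} - J\|_{\mathrm{op}} = \|T\|$ (the latter being by definition the norm of $T$ on the orthogonal complement of constants). Writing $T^{*} = J + (T^{*} - J)$ and applying the triangle inequality yields
$$\|P_{i+1} T^{*} P_i\|_{\mathrm{op}} \;\le\; \|P_{i+1} J P_i\|_{\mathrm{op}} + \|T\|.$$
The operator $P_{i+1} J P_i$ sends $f \mapsto \tfrac{1}{n}\langle f, \mathbf{1}_{Q_i}\rangle\, \mathbf{1}_{Q_{i+1}}$, and as a rank-one map its norm equals $\tfrac{1}{n}\|\mathbf{1}_{Q_i}\|\cdot\|\mathbf{1}_{Q_{i+1}}\| = \sqrt{\mu_i \mu_{i+1}}$. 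Chaining the bounds over $i = 1, \dots, \ell-1$ gives the stated inequality.

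The main obstacle is essentially bookkeeping rather than deep analysis: one must set up the path-probability inner product carefully (particularly in the directed case, where $T \ne T^{*}$ but both fix $\mathbf{1}$ for a graph with constant in- and out-degree), and verify the identification $\|T^{*} - J\|_{\mathrm{op}} = \|T\|$ so that the spectral gap of $\mathscr{G}$ enters the estimate in the correct form. Once this decomposition is in place, the telescoping of $\mathcal{A}$ and the rank-one computation are entirely mechanical.
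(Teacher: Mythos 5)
Your proof is correct and follows essentially the same route as the paper's: both decompose the compressed transition operator into the rank-one projection onto constants (norm $\sqrt{\mu_i\mu_{i+1}}$ after compression by the two indicator multiplications) plus a remainder of norm at most $\|T\|$, then telescope using idempotence and finish with Cauchy--Schwarz. The only difference is cosmetic — you propagate measures forward with $T^{*}$ while the paper propagates functions backward with $T$.
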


\proof 
 Let $\chi_{Q_i}$ be the characteristic function of $Q_i$, and  $A_i$ be the endomorphism of $L^2(V)$ defined by $f \mapsto \chi_{Q_i} f$. Let $\Pi$ denote the projection onto the constants, so that $T \Pi = \Pi$.
The endomorphism $A_i . T . A_{i+1}$  may be decomposed:
$$A_i T A_{i+1} = A_i .\Pi . A_{i+1}  + A_i T (1-\Pi) A_{i+1}.$$ 
The endomorphism $A_i . \Pi. A_{i+1}$ may be written as $f \mapsto \frac{\chi_{Q_i}}{|V|}\langle f, \chi_{Q_{i+1}}  \rangle$, and thus has operator norm $\mu_i^{1/2} \mu_{i+1}^{1/2}$. 
Since the operator norm of $A_i . T (1-\Pi) . A_{i+1}$ is at most $\|T\|$, we conclude
that the operator norm of $A_i T A_{i+1}$ is at most $\|T\| + (\mu_i \mu_{i+1})^{1/2}$. 
The probability that a random walk visits $Q_i$ at step $i$ for every $i \in \{1,\ldots,\ell\}$ is given by
$$|V|^{-1}\langle 1_V, (A_1T A_2) (A_2 T A_3) \dots (A_{\ell-1}  T A_{\ell}) 1_V \rangle $$
and the result follows.  
  \qed

\begin{lem}   \label{wigs}
Let $Q$ be a subset of $V$, with $\mu := |Q |/ n$, and let $\gamma$ be a random walk of length $\ell$.  Then the probability that $|\gamma \cap Q| \geq (\mu + \epsilon) \ell$ is at most
$$c_1 \exp(-c_2 \ell)$$
for positive constants $c_1, c_2$ depending only on $d, \|T\|, \mu, \epsilon$. 
\end{lem}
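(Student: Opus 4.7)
The plan is a moment-generating-function / Chernoff argument. Set $X := |\gamma \cap Q|$; for any $t > 0$,
\[ \Pr[X \geq (\mu+\epsilon)\ell] \leq e^{-t(\mu+\epsilon)\ell}\,\mathbb{E}[e^{tX}], \]
so everything reduces to controlling the moment generating function. Let $D_t$ be the diagonal operator $\Id + (e^t-1)A_Q$, equal to $e^t$ on $Q$ and $1$ off $Q$. Unravelling $e^{tX} = \prod_{i=0}^{\ell} D_t(\gamma_i,\gamma_i)$ along the walk, exactly as in the proof of Lemma~\ref{qi}, yields
\[ \mathbb{E}[e^{tX}] = \frac{1}{|V|}\langle \mathbf{1}, D_t(TD_t)^\ell\mathbf{1}\rangle = \frac{1}{|V|}\langle D_t^{1/2}\mathbf{1},\, F_t^\ell\, D_t^{1/2}\mathbf{1}\rangle, \]
where $F_t := D_t^{1/2} T D_t^{1/2}$ is self-adjoint. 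Since $\|D_t^{1/2}\mathbf{1}\|^2 = |V|(1+\mu(e^t-1))$, Cauchy--Schwarz gives $\mathbb{E}[e^{tX}] \leq (1+\mu(e^t-1))\,\rho(F_t)^\ell$.

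The crux of the argument is a quadratic bound of the form
\[ \rho(F_t) \leq 1 + \mu t + C\,t^2\qquad\text{for } |t| \leq \delta, \]
with $C, \delta > 0$ depending on $\|T\|$ and $\mu$. I would establish this by second-order perturbation theory around the simple top eigenvalue $1$ of $F_0 = T$, whose eigenvector is $v_0 = \mathbf{1}/\sqrt{|V|}$ and whose spectral gap is $1-\|T\|$. Writing $F_t = T + u(A_Q T + TA_Q) + u^2 A_Q T A_Q$ with $u = e^{t/2}-1 = t/2 + O(t^2)$, the first-order coefficient is $\langle v_0,(A_Q T + TA_Q)v_0\rangle = 2\mu$, and the standard second-order sum $\sum_{k\neq 0}|\langle v_0,(A_Q T + TA_Q)v_k\rangle|^2/(1-\lambda_k)$ is bounded by $(1+\|T\|)^2\mu(1-\mu)/(1-\|T\|)$. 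This is exactly the same decomposition $T = \Pi + (T-\Pi)$ used in the proof of Lemma~\ref{qi}, but tracked one order higher.

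Combining, $\mathbb{E}[e^{tX}] \leq (1+\mu(e^t-1))\exp(\ell\mu t + \ell C t^2)$, so $\Pr[X \geq (\mu+\epsilon)\ell] \leq 2\exp(-\epsilon t\ell + C t^2\ell)$ for $t \in (0,\delta)$. Optimizing at $t = \min\{\delta,\,\epsilon/(2C)\}$ produces the claimed bound $c_1\exp(-c_2\ell)$, with $c_2$ depending on $\|T\|,\mu,\epsilon$ and degrading as $\|T\| \to 1$ or $\epsilon \to 0$.

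The main obstacle is extracting the \emph{correct} linear coefficient $\mu$ in the spectral expansion, rather than a larger value such as $\mu + \|T\|(1-\mu)$ coming from crude estimates of the form $\|F_t\| \leq \|D_t^{1/2}\|^2\|T\| + \|D_t^{1/2}\Pi D_t^{1/2}\|$. A cruder bound would yield a Chernoff estimate only for $\epsilon > \|T\|(1-\mu)$, which is insufficient for arbitrary $\epsilon > 0$. Getting the right linear term genuinely requires second-order perturbation at the simple top eigenvalue, leveraging both the rank-one structure of $\Pi$ and the spectral gap $1-\|T\|$ to control the quadratic correction.
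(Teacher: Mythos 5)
Your argument is a genuinely different route from the paper's. The paper proves Lemma \ref{wigs} by brute force from Lemma \ref{qi}: for each pattern $S \subseteq \{1,\dots,\ell\}$ of visits to $Q$ it bounds the probability of realizing exactly that pattern by $C_1 \mu^{|S|}(1-\mu)^{\ell-|S|}\bigl(1+\tfrac{\|T\|}{\min(\mu,1-\mu)}\bigr)^{\ell}$, sums the binomial tail as in the i.i.d.\ Bernoulli Chernoff bound, and then kills the loss factor $\bigl(1+\tfrac{\|T\|}{\min(\mu,1-\mu)}\bigr)^{\ell}$ by replacing $\mathscr{G}$ with the power graph $\mathscr{G}^{(C_3)}$, for which $\|T^{(C_3)}\|=\|T\|^{C_3}$ is as small as needed. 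Your transfer-operator/perturbation argument is the Gillman-type bound; it avoids the powering trick, gives the correct linear coefficient $\mu$ directly, and yields better constants. In the setting of a self-adjoint $T$ it is correct modulo the standard (but nontrivial) task of making the $O(t^3)$ error in the eigenvalue expansion uniform via a resolvent bound on $|t|\leq\delta$ --- you acknowledge this, and it is exactly the point where the crude estimate would lose the factor you identify at the end.

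There is, however, a gap in the generality required by the paper. The lemma is stated for \emph{possibly directed} $d$-regular graphs, and the one place it is used --- the proof of Proposition \ref{chernofflemma} --- applies it to the arc graph $\mathscr{G}'$, whose adjacency operator $T'$ is \emph{not} self-adjoint. Your proof uses self-adjointness twice: once to write $\|F_t^{\ell}\|=\rho(F_t)^{\ell}$ (for a non-normal operator the spectral radius does not control $\|F_t^{\ell}\|$ for finite $\ell$ without extra input), and once in the second-order perturbation formula $\sum_{k\neq 0}|\langle v_0,Bv_k\rangle|^2/(1-\lambda_k)$, which presupposes a real spectrum and an orthonormal eigenbasis. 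Neither is available for $T'$. The paper's proof sidesteps this entirely because Lemma \ref{qi} and the powering trick use only the operator norm of $T$ on $\mathbf{1}^{\perp}$. Your approach can be repaired --- e.g.\ by bounding $\|D_t^{1/2}TD_t^{1/2}\|$ directly via the decomposition $T=\Pi+(T-\Pi)$ rather than its spectral radius, at the cost of a short additional computation --- but as written the key step fails precisely in the case the paper needs.
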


In other words, the number of ``bad'' walks of length $\ell$, with respect to some fixed notion of ``bad'',
decays exponentially with $\ell$.

\proof

The constants $C_1, C_2, \ldots$ appearing in the proof are all understood to be positive constants depending only on $d,\|T\|,\mu,\epsilon$.

Let $S$ be a subset of $\{1..\ell\}$ of size $k$.  It follows from Lemma~\ref{qi} that the probability that $\gamma_i \in Q$ for $i \in S$ and $\gamma_i \notin Q$ for $i \notin S$ is at most 
$$   C_1 \mu^k (1-\mu)^{\ell-k} \bigl(1+\frac{\|T\|}{\min(\mu,1-\mu)}\bigr)^{\ell}.$$ 
Summing over all choices of $S$ we have that the probability that $|\gamma \cap Q| = k$ is  at most
$$   C_1 {\ell \choose k} \mu^k (1-\mu)^{\ell-k} \bigl(1+\frac{\|T\|}{\min(\mu,1-\mu)}\bigr)^{\ell} $$ 

On the other hand, the sum $$\sum_{k \geq (\mu +\epsilon) \ell} \binom{\ell}{k} \mu^k (1-\mu)^{\ell-k}$$
is at most $\exp(-C_2 \ell).$  Thus, we are done if $\|T\|$ is small enough that $(1+ \frac{\|T\|}{\min(\mu, 1-\mu)}) < e^{C_2}.$
 
 If this is not the case, we fix an integer $C_3 \geq 1$, and replace the graph
 $\mathscr{G}$ by the graph $\mathscr{G}^{(C_3)}$ for which a directed edge from $x$ to $y$ corresponds
 to a directed path of length $C_3$ in the graph $\mathscr{G}$.  (Note that $\mathscr{G}^{(C_3)}$ may have multiple edges and loops.) This improves the spectral gap:
 if $T^{(C_3)}$ is the normalized adjacency matrix of $\mathscr{G}^{(C_3)}$, we have $T^{(C_3)} = T^{C_3}$. Accordingly, $\|T^{(C_3)}\| = \|T\|^{C_3}$.  Choosing $C_3$ large enough, the argument above shows  that the probability that a random walk of length $\ell$ on the graph $\mathscr{G}^{(C_3)}$ of  remains within $Q$ for at least $(\mu + \epsilon) \ell$ steps is bounded above by  $C_4 \exp(-C_5 \ell)$.

It follows immediately that the probability that a random walk on $\mathscr{G}$
of length $C_3 \ell$ spends time $\geq (\mu + \epsilon) C_3 \ell$ inside $Q$
is at most $C_4 \exp(-C_5 \ell)$.  

This proves the desired claim.
 \qed 
\subsection{The arc graph} \label{arcgraph}

Lemma~\ref{wigs}, which tells us that an exponentially small proportion of random walks are poorly distributed in $\mathscr{G}$, will not quite suffice for our purposes; what we need to know is that an exponentially small proportion of {\em non-backtracking} walks are poorly distributed in $\mathscr{G}$.  In this section we explain how to derive such a statement from Lemma~\ref{wigs}.
 
We now assume $\mathscr{G}$ to be {\em symmetric}; that is, $E$ is closed under reversal.  For each edge $a \in E$, write $a^+$ for the target of $a$ and $a^-$ for the source of $a$.  We denote the reversal of $a$ by $\bar{a}$.
With these notations, define the arc graph $\mathscr{G}'$ to be a directed
graph whose vertices are the directed edges, or arcs, of $\mathscr{G}$.  There is an edge
from the arc $a$ to the arc $b$ exactly when $a^{+} = b^{-}$ and $a \neq \bar{b}$.

Thus, $\mathscr{G}'$ is regular of degree $d-1$.  We denote by $T'$ the normalized adjacency matrix of $\mathscr{G}'$:
$$T' F(a) = \frac{1}{d-1} \sum_\stacksum{b^{-} = a^{+}}{b \neq \bar{a} }F(b)$$
The key feature of the arc graph $\mathscr{G}'$, for us, is that we have a natural bijection
between non-backtracking paths of length $\ell $ on $\mathscr{G}$,
and paths of length $\ell-1$ on $\mathscr{G}'$. 

\subsection{}

Our goal will be to deduce a spectral gap for $T'$ from that for $T$. This is a simple analogue
of Atkin-Lehner theory in the subject of modular forms: when $d = p+1$ for some prime $p$, we can think of $\mathscr{G}$ as a quotient of the Bruhat-Tits tree attached to $\PGL_2(\Q_p)/\PGL_2(\Z_p)$; then the arc graph of the Bruhat-Tits tree is obtained by replacing $\PGL_2(\Z_p)$ with an Iwahori subgroup, so that the passage from graph to arc graph is much the same as the passage from the congruence subgroup $\Gamma_0(N)$ to the smaller subgroup $\Gamma_0(Np)$.

There are natural maps $B, E: L^2(\mathscr{G}) \rightarrow L^2( \mathscr{G}')$ (``beginning'' and ``end'') defined via
$$Bf(a) = f(a^{-}), \ Ef(a)= f(a^{+}).$$

Moreover, the orthogonal complement to $\im(B) \oplus \im(E)$ consists of those functions
$F \in L^2( \mathscr{G}')$ with the property that $$\sum_{a^{-} = v} F(a)  = \sum_{a^{+} = v} F(a)
 = 0,$$ for all $v \in V\mathscr{G}$. On this orthogonal complement (the ``new space") , the operator
 $T'$  acts via \begin{equation} F \mapsto - \frac{1}{d-1} \bar{F} \label{tnewspace} ,\end{equation}
 where $\bar{F}(a) = F(\bar{a})$. 
 Moreover, one checks  that $$\langle B f_1, E f_2 \rangle = d \langle T f_1, f_2 \rangle, \ \  T' \circ B = E.$$

Thus, if $w \in L^2(\mathscr{G})$ is an eigenfunction for $T$ with eigenvalue $\lambda$, 
then the ``old space" $ \C (Bw) + \C (Ew)$ is stable under $T'$. 
From this we see that every eigenvalue of $T'$ on this space
 is also an eigenvalue of the matrix :
$$\left( \begin{array}{cc} 0 & 1 \\ \frac{-1}{(d-1)}   & \frac{ d  \lambda}{(d-1)} \end{array} \right)$$
It is easily computed that the eigenvalues are bounded away from $1$ if $\lambda$ is. 
 By \eqref{tnewspace}, the eigenvalues of $T'$ on the new space are bounded in absolute value by $1/(d-1) < 1$.  We conclude that  $\|T'\|$ is bounded away from $1$ if $\|T\|$ is. 

\begin{prop} \label{chernofflemma}
Let $\mathscr{G} = (V,E)$ be an undirected graph with $\|T\| < 1$,  and let $Q$ be a subset of $V$, with $\mu := |Q| / n$.  The probability that a random walk {\em without backtracking} of length $\ell$
spends more than $(\mu+\epsilon) \ell$ time in $Q$ is at most
$$c_1 \exp(-c_2 \ell)$$
for constants $c_1, c_2 > 0$ depending only on $d, \|T\|, \mu, \epsilon$. 
\end{prop}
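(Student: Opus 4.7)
The plan is to reduce Proposition \ref{chernofflemma} to Lemma \ref{wigs} by working on the arc graph $\mathscr{G}'$ introduced in \S \ref{arcgraph}. The fundamental observation is that non-backtracking walks of length $\ell$ on $\mathscr{G}$ are in natural bijection with ordinary walks of length $\ell-1$ on $\mathscr{G}'$: a non-backtracking walk $v_0 \to v_1 \to \cdots \to v_\ell$ corresponds to the sequence of arcs $a_i = (v_{i-1}, v_i) \in V\mathscr{G}'$, and the non-backtracking condition $v_{i+1} \neq v_{i-1}$ is exactly the condition $a_{i+1} \neq \bar{a}_i$ defining edges in $\mathscr{G}'$. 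Moreover, if $v_0$ is chosen uniformly in $V$ and then each subsequent edge is chosen uniformly among the $d-1$ non-backtracking options, then $a_1$ is uniform in $V\mathscr{G}'$ (using that $\mathscr{G}$ is $d$-regular), so the NBRW on $\mathscr{G}$ projects to the uniform random walk on $\mathscr{G}'$.

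First I would lift the set $Q \subset V$ to the set $Q' = \{a \in V\mathscr{G}' : a^+ \in Q\} \subset V\mathscr{G}'$. Since every vertex of $\mathscr{G}$ is the target of exactly $d$ arcs, we have $|Q'| = d |Q|$ and $|V\mathscr{G}'| = d|V|$, so the density of $Q'$ in $V\mathscr{G}'$ equals $\mu$. Under the bijection above, the number of indices $i \in \{1, \ldots, \ell\}$ for which the arc $a_i$ lies in $Q'$ equals the number of indices $i \in \{1, \ldots, \ell\}$ for which $v_i$ lies in $Q$, which differs from $|\gamma \cap Q|$ by at most $1$ (accounting for $v_0$). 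Consequently, the event $|\gamma \cap Q| \geq (\mu + \epsilon)\ell$ for a NBRW of length $\ell$ on $\mathscr{G}$ implies the event that a random walk of length $\ell - 1$ on $\mathscr{G}'$ spends at least $(\mu + \epsilon/2)\ell$ steps in $Q'$, provided $\ell$ is large enough (say $\ell \geq 2/\epsilon$; smaller $\ell$ can be absorbed into the constants).

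Next I would invoke the spectral bound established in \S \ref{arcgraph}. The analysis there decomposes $L^2(\mathscr{G}')$ into the new space, on which $T'$ acts by $\pm \tfrac{1}{d-1}$ (up to the involution $F \mapsto \bar F$), and the old space generated by $Bw, Ew$ as $w$ ranges over eigenfunctions of $T$. Using $T' \circ B = E$ and $\langle Bf_1, Ef_2 \rangle = d\langle Tf_1, f_2 \rangle$, the eigenvalues of $T'$ on each $2$-dimensional old-space block are the eigenvalues of the explicit $2 \times 2$ matrix displayed there, which are bounded away from $1$ in modulus whenever the corresponding eigenvalue $\lambda$ of $T$ satisfies $|\lambda| \leq \|T\| < 1$. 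Combined with the bound $\tfrac{1}{d-1} < 1$ on the new space, we conclude $\|T'\| \leq 1 - \alpha'$ for some $\alpha' > 0$ depending only on $d$ and $\|T\|$.

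Finally I would apply Lemma \ref{wigs} directly to the graph $\mathscr{G}'$ (which is $(d-1)$-regular with spectral gap $\alpha'$), to the set $Q'$ of density $\mu$, and with tolerance $\epsilon/2$, obtaining a bound $c_1 \exp(-c_2 \ell)$ for the probability that a random walk on $\mathscr{G}'$ spends more than $(\mu + \epsilon/2)\ell$ of its time in $Q'$; by the above this dominates the probability of the bad event for NBRWs on $\mathscr{G}$. I expect the main potential obstacle to be purely bookkeeping: verifying that $\mathscr{G}'$ really is $(d-1)$-regular in the possibly multigraph/loop setting and that the Lemma \ref{wigs} hypotheses apply to a directed graph (which it does, as the proof of Lemma \ref{wigs} only uses the spectral gap of the adjacency operator, not symmetry); everything else is formal.
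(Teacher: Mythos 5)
Your proposal is correct and follows essentially the same route as the paper: lift $Q$ to the set $Q'$ of arcs in the arc graph $\mathscr{G}'$ (the paper uses the source vertex rather than the target, which is immaterial), observe that the density is preserved and that $\|T'\|$ is bounded away from $1$ in terms of $\|T\|$ by the old/new space analysis of \S\ref{arcgraph}, and apply Lemma~\ref{wigs} to $(\mathscr{G}',Q')$. Your extra bookkeeping (the explicit bijection, the uniformity of the first arc, and the $\epsilon/2$ adjustment for the off-by-one in counting visits) only fills in details the paper leaves implicit.
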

\proof
Let $Q' \subset V \mathscr{G}'$ be the subset of arcs whose initial vertex lies inside $Q$.
Noting that $\frac{|Q'|}{|\mathscr{G}'|} = \frac{|Q|} {|V|}$, we apply Lemma \ref{wigs}
 to $(\mathscr{G}', Q')$ taking into account that $\|T'\|$ is bounded away from $1$ in terms of $\|T\|$.   \qed

\part{Related topics}
\section{Miscellanea and extensions of the ergodic method.}
 \label{Extensions}
In this section we comment on various modifications that can be made in the assumptions or the proofs.

\subsection{The squarefreeness assumption} Through this paper, we have assumed that $d$ is {\em squarefree}. This is mainly for simplifying the exposition. It is  true in general that some quotient of $\Hd$ by a subgroup of $\SO_{3}(\Zz)$ is acted on by the ideal class group of some quadratic order
 containing $\sqrt{-d}$.  However, when $d$ is not squarefree this action is not necessarily homogeneous: there may be {\em several orbits}.
  This has to do with the existence of {\em non-primitive elements in $\Hd$}.  An element $\bfx=(a,b,c)\in\Hd$
   is primitive iff the integers $a,b,c$ are coprime; when $d$ is squarefree, every element of $\Hd$ is primitive. On the other hand, if we choose $d$ of the form $d_0 f^2$, the subset $f^2\H_{d_0} \subset \Hd$ forms a separate orbit of the class group action. 
The appropriate repair is to consider just the set of primitive elements $\Hd^{\mathrm{p}}\subset\Hd$; these elements again form a homogeneous space for a certain class group.  By the methods of this paper one can then show
    the equidistribution of $\rmr_{q}(\Hd^{\mathrm{p}})$ or $\rmr_{\infty}(\Hd^{\mathrm{p}})$ as $d\ra\infty$. Using the decomposition
    $$\Hd=\bigsqcup_{f^2|d}f.\mathscr{H}^{\mathrm{p}}_{d/f^2}$$ one can conclude that $\rmr_{q}(\Hd)$ and $\rmr_{\infty}(\Hd)$
    are indeed equidistributed.
\subsection{Distribution of trajectories}  One of the important ingredients for the proof of Theorem \ref{mvQ} has been to consider the {\em trajectories} associated to the various points in $\Hd$ and to show that these trajectories are, in a suitable sense, ``well-spaced''. 

The principle of lifting a point to its trajectory and then of studying the distribution of the trajectories occurs in several recent applications of ergodic methods to number theory.  For instance, a (much less evident) version of this principle occurs in the proof of the arithmetic quantum unique ergodicity conjecture, in which the role of the lifted trajectories is played by the ``microlocal lifts'' \cite{Lin}.

In fact it is possible to extrapolate the above principle to showing that {\em segments of trajectories of bounded length} become equidistributed. For instance, in the context of  Theorem \ref{linnikQ}, one can prove the following result:
 \begin{thm*} Under the assumptions of Theorem \ref{linnikQ}, fix $\ell_{0}>0$, and let $\gamma_{0}^{(\ell_{0})}$ be a fixed
  non-backtracking marked path on $\Hd(q)$ of length $2\ell_{0}+1$; then the cardinality of the set of $\x\in\Hd$ for which
  $$\gamma_{\x}^{(\ell_{0})}=\gamma_{0}^{(\ell_{0})}$$
  is equal to
  $$\frac{1}{3 \cdot 5^{2\ell_0-1}|\Hd(q)|}|\Hd|(1+o(1)),\ d\ra\infty.$$
  \label{5adic}
 \end{thm*}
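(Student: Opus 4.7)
The plan is to reduce the claim to an equidistribution statement for $\Hd$ modulo the composite modulus $q\cdot 5^{\ell_0}$, and to establish the latter by adapting the proof of Theorem \ref{mvQ}.

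\emph{Reduction.} Proposition \ref{Naddles} shows that $\gamma_\x^{(\ell_0)}=\gamma_{\x'}^{(\ell_0)}$ if and only if $\x\equiv \x' \pmod{q}$ (so that the basepoints agree in $\Hd(q)$) and $\x\equiv\pm\x'\pmod{5^{\ell_0}}$ (so that the edge words agree, using that $W_\x=W_{-\x}$ depends only on $\pm\x\bmod 5^{\ell_0}$). By the Chinese Remainder Theorem, the fiber of any marked path $\gamma_0^{(\ell_0)}$ under $\x\mapsto\gamma_\x^{(\ell_0)}$ is a union of at most two residue classes of $\Hd$ modulo $q\cdot 5^{\ell_0}$, forming a single $\pm$-orbit in $\Hd(q\cdot 5^{\ell_0})$. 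Direct enumeration gives $|\Hd(5)|=30$, so by Hensel's lemma $|\Hd(5^{\ell_0})|=6\cdot 5^{2\ell_0-1}$, and $|\Hd(q\cdot 5^{\ell_0})|/2 = 3\cdot 5^{2\ell_0-1}\,|\Hd(q)|$, matching the denominator in the claim. The theorem therefore reduces to the pointwise equidistribution
\begin{equation}\label{equi-target}
|\rmr_{q\cdot 5^{\ell_0}}^{-1}(\obfx)|=(1+o(1))\frac{|\Hd|}{|\Hd(q\cdot 5^{\ell_0})|}, \qquad d\to\infty,
\end{equation}
uniformly in $\obfx\in\Hd(q\cdot 5^{\ell_0})$.

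\emph{Equidistribution modulo $q\cdot 5^{\ell_0}$.} Since $\ell_0$ is fixed, the modulus $q\cdot 5^{\ell_0}$ is essentially of the same order as $q$, and in particular satisfies $q\cdot 5^{\ell_0}\le d^{1/4-\nu'}$ for some $\nu'>0$. The argument of \S \ref{mvQproof} adapts. We continue to work with non-backtracking trajectories on the unchanged expander $\Hd(q)$; Proposition \ref{Naddles} already encodes congruence data modulo $q\cdot 5^\ell$ for every $\ell\ge\ell_0$, and Proposition \ref{meansquare} is unchanged. The only refinement needed is the detection step: the bad set $B_\delta$ of residue classes $\obfx\in\Hd(q\cdot 5^{\ell_0})$ where \eqref{equi-target} fails by more than $\delta$ must be detected along trajectories via the $\ell_0$-window around each step. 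This is handled by passing to the order-$\ell_0$ arc graph over $\Hd(q)$, iterating the construction of \S \ref{arcgraph}; its spectral gap is inherited from that of $\Hd(q)$ (Proposition \ref{JLD}), yielding the required large-deviation estimate. Running the contradiction argument of \S \ref{mvQproof} gives $|B_\delta|/|\Hd(q\cdot 5^{\ell_0})|\to 0$ for each $\delta>0$. Because $|\Hd(q\cdot 5^{\ell_0})|$ is bounded independently of $d$, ``a vanishing fraction is bad'' upgrades to ``no class is bad for $d$ sufficiently large,'' which is precisely \eqref{equi-target}.

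\emph{Main obstacle.} The substantive new input is the upgrade of Proposition \ref{chernoff} from a vertex-visit large-deviation estimate to a sub-path-visit version along $\ell_0$-windows. Once $\ell_0$ is fixed, this is a routine iteration of the arc-graph bootstrap and introduces no new analytic difficulty, but it is the step that bridges the ``points on $\Hd(q)$'' picture of Theorem \ref{mvQ} with the finer ``$5$-adic neighborhoods'' picture required here; keeping the spectral gap constants independent of $d$ throughout the tower of arc graphs is what ultimately makes the fixed-$\ell_0$ hypothesis essential.
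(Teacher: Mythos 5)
The paper states this theorem without proof (it appears in \S \ref{Extensions} as an extrapolation of the method of \S \ref{mvQproof}), and your reconstruction is essentially the intended argument: reduce via the shadowing lemma to counting points of $\Hd$ in residue classes modulo $q5^{\ell_0}$, and detect those classes along the reduced trajectories on the \emph{unchanged} graph $\Hd(q)$ by recording windows of length $2\ell_0$ rather than single vertices. Your bookkeeping is correct ($|\Hd(5^{\ell_0})|=6\cdot 5^{2\ell_0-1}$ by Hensel, and each fiber is exactly two classes since $\x\not\equiv-\x$ modulo $5^{\ell_0}$), up to one harmless imprecision: the two classes are $(\x \bmod q,\ \pm\x \bmod 5^{\ell_0})$, not the $\pm$-orbit of $\x$ in $\Hd(q5^{\ell_0})$ — the sign is only free at $5$, because the basepoints in $\Hd(q)$ must agree exactly. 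You are also right that Theorem \ref{mvQ} cannot simply be invoked with modulus $q5^{\ell_0}$ (it is divisible by the Linnik prime $5$), which is precisely why the window argument is needed, and right that boundedness of the number of windows upgrades ``almost every'' to ``every''.

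The one step you dispatch too quickly is the large-deviation estimate for window visits. In the natural ``order-$\ell_0$ arc graph'' (vertices: non-backtracking paths with $2\ell_0$ edges; edges: shift by one step), consecutive windows overlap in $2\ell_0-1$ arcs, so the one-step shift operator has operator norm $1$ on mean-zero functions: it carries the subspace of mean-zero functions of the first arc isometrically onto functions of the second arc. Hence Lemma \ref{qi} applied verbatim to this graph gives nothing, the identity $\|T^{(C_3)}\|=\|T\|^{C_3}$ used in the proof of Lemma \ref{wigs} does not help, and the construction of \S \ref{arcgraph} cannot literally be iterated (it is stated for symmetric graphs, and the arc graph is already directed). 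What rescues the argument is a decoupling step: for $m\geq 2\ell_0$ the windows $\omega_0$ and $\omega_m$ depend on disjoint blocks of arcs, and conditioning on the last arc of $\omega_0$ and the first arc of $\omega_m$ renders them independent by the Markov property of the non-backtracking walk; this yields $\|(T^{[2\ell_0]})^{m}\|_{L^2_0}\leq \|T'\|^{\,m-2\ell_0+1}$, where $T'$ is the ordinary arc operator whose gap is controlled by Proposition \ref{JLD}. The spectral radius is therefore bounded away from $1$ (with constants depending on $\ell_0$, as you anticipate), and the power trick in the proof of Lemma \ref{wigs} then applies. With that supplied, your proof is complete.
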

 
(Note that $3 \cdot 5^{2\ell_0-1}|\Hd(q)|$ is the total number of non-backtracking paths on $\H_d(q)$ of length $2\ell_0 + 1$.)

 In combination with Proposition~\ref{Naddles} this implies that $\Hd$ is well-distributed $5$-adically, as well as mod $q$.  This may be seen as the analogue of Duke's theorem on the equidistribution of closed geodesics on the {\em unit tangent bundle of} the modular surface $X_{0}(1)$ (cf. \cite{Duke}[Theorem 1]).  See \cite{ELMV2} for a purely dynamical proof of the latter, following principles similar to the ones of the present paper (although in a different language). This theorem also admits a variant in the context of Theorem \ref{linnikinfty} and uniform versions analogous to Theorems \ref{MVinfty} and \ref{mvQ}.

\subsection{Application to mixing} 
As an application of the uniformity reached in Theorems \ref{linnikinfty} and \ref{mvQ}, we have the following mixing type results:
\begin{cor}
Fix $\delta > 0$.   Let $q$
 be a fixed integer coprime with $30$. For each squarefree $d\equiv\pm 1(5)$
 and coprime with $q$, let $\mathfrak{p}_d$ be a primitive\footnote{A primitive ideal is an integral ideal of minimal norm in its ideal class.} ideal of $\OO_{K}$ such that $\Norm(\mathfrak{p}_d) < d^{1/2-\delta}$; we suppose moreover that as $d\ra+\infty$, $\Norm(\mathfrak{p}_d) \rightarrow +\infty$. Then, for $d\ra\infty$
the set \begin{equation} \label{set1} \{\rmr_{\infty}(\tilde\bfx, [\mathfrak{p}_d]. \tilde\bfx),\  \tilde\bfx \in \SO_3(\Z)\bash\Hd\} \subset \SO_3(\Z)\bash S^2 \times \SO_3(\Z)\bash S^2 \ \end{equation} 
becomes equidistributed on $\SO_3(\Z)\bash S^2 \times \SO_3(\Z)\bash S^2$ with respect to the product of Lebesgue measures on each factor.
Similarly, the  multiset \begin{equation} \label{set2} \{\rmr_{q}(\tilde\bfx, [\mathfrak{p}_d]. \tilde\bfx),\  \tilde\bfx \in \SO_3(\Z)\bash\Hd\} \subset \SO_3(\Z)\bash \Hd(q) \times \SO_3(\Z)\bash \Hd(q) \ \end{equation}
  becomes equidistributed on $\SO_3(\Z)\bash \Hd(q) \times \SO_3(\Z)\bash \Hd(q)$ with respect to the counting measure.
\end{cor}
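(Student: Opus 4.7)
The plan is to extend the ergodic method of \S\ref{mvQproof} to a doubled setting. Since $\Pic(\OO_K)$ is abelian, the set $\{(\tilde\bfx,[\mathfrak{p}_d]\tilde\bfx) : \tilde\bfx \in \tildeHd\}$ is a single orbit for the diagonal action of $\Pic(\OO_K)$ on $\tildeHd \times \tildeHd$, of cardinality $|\tildeHd|$; the hypothesis $\Nr(\mathfrak{p}_d) \to \infty$ is precisely what prevents this orbit from clustering near the diagonal. The key structural observation is that for the Linnik trajectory $\bfx_i = [\mathfrak{p}]^i\bfx$ (with $\mathfrak{p}$ the fixed prime above $5$), commutativity of $\Pic(\OO_K)$ yields $[\mathfrak{p}]^i([\mathfrak{p}_d]\bfx) = [\mathfrak{p}_d]\bfx_i$, so the pair $(\gamma_\bfx,\gamma_{[\mathfrak{p}_d]\bfx})$ forms a single \emph{pair-trajectory} on $\tildeHdq \times \tildeHdq$, with the two components linked at every step by the same class $[\mathfrak{p}_d]$.

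With this setup, the argument proceeds by contradiction in parallel to \S\ref{mvQproof}: assume a subset $\Bdel \subset \tildeHdq \times \tildeHdq$ of density at least $\eta$ has anomalous representation by pair-reductions, and balance lower and upper bounds on the number of pair-trajectories whose visits to $\Bdel$ are anomalously frequent. The lower bound rests on a pair shadowing argument: applying Proposition~\ref{Naddles} separately to each component shows that coincidence of truncated pair-trajectories of $(\bfx,[\mathfrak{p}_d]\bfx)$ and $(\bfx',[\mathfrak{p}_d]\bfx')$ forces both $\bfx \equiv \pm\bfx'$ and $[\mathfrak{p}_d]\bfx \equiv \pm[\mathfrak{p}_d]\bfx'$ modulo $q \cdot 5^\ell$. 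Combining both constraints with Linnik's basic lemma (Proposition~\ref{BasicLemma}) --- applied to the second component modulo the denominators introduced by the rational rotation realizing $[\mathfrak{p}_d]$ --- yields a count of coincidences; balancing is achieved by choosing $\ell$ so that $q^{2}\, \Nr(\mathfrak{p}_d)\, 5^{2\ell} \asymp |\Hd|$, a range valid precisely when $\Nr(\mathfrak{p}_d) < d^{1/2-\delta}$.

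The main obstacle is the upper bound: a large-deviation estimate of the form of Proposition~\ref{chernoff} for pair-trajectories on the product graph $\tildeHdq \times \tildeHdq$ along the graph of the $[\mathfrak{p}_d]$-action. The standard Chernoff argument of \S\ref{expander} requires a spectral gap for the Hecke operator on this product restricted to the appropriate invariant subspace, and this is not automatic from Proposition~\ref{JLD} alone: one must rule out joint Hecke eigenfunctions on $\tildeHdq \times \tildeHdq$ whose $T_5 \otimes T_5$-eigenvalue on both factors is extremal without being the constant function. Via Jacquet--Langlands and the Ramanujan bound for weight-$2$ forms used in Proposition~\ref{JLD}, this reduces to a non-coincidence statement for pairs of characters of $\Pic(\OO_K)$ differing by $[\mathfrak{p}_d]$, which the hypothesis $\Nr(\mathfrak{p}_d) \to \infty$ guarantees asymptotically. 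The $S^2 \times S^2$ version \eqref{set1} follows by the same scheme, replacing the modular shadowing by the $5$-adic shadowing of \S\ref{Naddlesproof}, exactly as in the passage from Theorem~\ref{mvQ} to Theorem~\ref{MVinfty}.
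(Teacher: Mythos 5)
The paper supplies essentially no proof of this corollary: it is asserted as ``an application of the uniformity reached in'' Theorems \ref{MVinfty} and \ref{mvQ}, and the remark that follows the statement reveals the intended two-step structure --- for each fixed $N=\Norm(\mathfrak{p}_d)$ the pair set equidistributes, by a variant of the ergodic method valid uniformly for $N\leq d^{1/2-\delta}$, on the graph $\mathcal{C}_N$ of the $N$-th Hecke correspondence inside the product, and these correspondences themselves fill out the product as $N\to\infty$ (spectral gap for $T_N$). Your proposal instead runs the whole argument in one pass on the product graph $\tildeHdq\times\tildeHdq$, and this cannot work. The pairs $(\tilde\x,[\mathfrak{p}_d]\tilde\x)$ all have (essentially) the same relative position, determined by the class $[\mathfrak{p}_d]$: for instance $\x\cdot[\mathfrak{p}_d]\x$ takes only finitely many values depending on $\mathfrak{p}_d$, so that for each fixed $d$ the set \eqref{set1} is supported on a measure-zero hypersurface of $S^2\times S^2$, and likewise the pair-trajectories occupy only a sub(multi)graph of the product of density roughly $\sigma(N)/|\Hd(q)|$. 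Consequently the pair-trajectories form a family whose size $\ll|\Hd|\asymp d^{1/2}$ is a negative power of $d$ times the total number of non-backtracking pair-paths of the relevant length ($\asymp 5^{4\ell}|\Hd(q)|^2$), whereas the large-deviation estimate of Proposition \ref{chernoff} only excludes a proportion $e^{-c_2\ell}=d^{-\tau}$ with $\tau$ small. The ``bad'' paths it excludes could therefore contain every one of your pair-trajectories, and the contradiction between the upper and lower bounds for $M$ never materializes. (Indeed the pair-trajectories \emph{are} atypical paths of the product graph: each spends all of its time on the correspondence.) Equidistribution in the product is a genuinely two-scale phenomenon and must be proved in two steps.

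Two further problems. First, the spectral input you propose --- ruling out ``joint Hecke eigenfunctions'' via ``a non-coincidence statement for pairs of characters of $\Pic(\OO_K)$ differing by $[\mathfrak{p}_d]$'' --- conflates two unrelated spectral decompositions: the eigenfunctions of $T_5$ on $\Hd(q)$ correspond under Jacquet--Langlands to forms of level dividing $18q^2$, independent of $d$ and of $K$, while class-group characters arise only in the Weyl-sum picture, where bounding $\sum_\chi\hat f_1(\bar\chi)\hat f_2(\chi)\chi([\mathfrak{p}_d])$ requires subconvexity in the $\chi$-aspect and is precisely the open mixing conjecture of \cite{MVNV}. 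Second, in your lower bound the second shadowing congruence is essentially vacuous: since $\SO_{\x_0}(\A_f)$ is commutative, the relative position of $[\mathfrak{p}_d]\x$ and $[\mathfrak{p}_d]\x'$ coincides (up to $\SO_3(\Z)$) with that of $\x$ and $\x'$, so the condition on the second components reduces to the same congruence $\x\cdot\x'\equiv\pm d$ and yields no extra saving in Linnik's basic lemma; with your choice $q^2\Norm(\mathfrak{p}_d)5^{2\ell}\asymp|\Hd|$ the bound of Proposition \ref{meansquare} degrades to $\ll\Norm(\mathfrak{p}_d)\,d^{1/2+\eps}$, which is too weak. The repair is to set up the shadowing, the basic lemma, and the large-deviation estimate directly on the correspondence graph $\mathcal{C}_N(q)$, whose size $\asymp\sigma(N)|\Hd(q)|$ is the source of the constraint $\Norm(\mathfrak{p}_d)<d^{1/2-\delta}$, and then to conclude by the separate equidistribution of $\mathcal{C}_N$ in the product as $N\to\infty$.
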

 Notice that if the norm $\Norm(\mfp_{d})$ remains constant for $d$ varying (possibly over a suitable subsequence), a simple variant of Theorem \ref{MVinfty} above shows that the set \eqref{set1} (resp. \eqref{set2}) becomes equidistributed along some codimension two subvariety of $(\SO_3(\Z)\bash S^2)^2$ (resp. a multiset of order $\approx \Norm(\mathfrak{p}_d)q^2$ in $(\SO_3(\Z)\bash \Hd(q))^2$\ ), namely the graph of the $\Norm(\mfp_{d})$-th Hecke correspondance. This explains the condition $\Norm(\mathfrak{p}_d) \rightarrow \infty$.
 
 In \cite{MVNV}, the second and third authors. proposed a ``mixing conjecture'': namely the quoted statement should remain true
so long as $\Norm(\mathfrak{p}_d)$ 
tends to $\infty$ as $d \rightarrow \infty$. Not surprisingly, the range {\em not} covered by the corollary (i.e. $d^{1/2-\eps(d)}\leq \Norm(\mathfrak{p}_d)\ll d^{1/2}\log d $ for $0<\eps(d)\ra 0$) is by far the most interesting range of the conjecture; for instance, the applications sketched in \cite{MVNV} depend crucially on this range.

\subsection{Extensions to other quadratic forms.}
For the {\em indefinite} discriminant quadratic form $x^2-yz$, results analogous to Linnik's theorems have been obtained by Skubenko  using the ergodic method (see \cite[Chap VI]{EPNF} and the references inside). In \cite{ELMV2}, the second and third authors together with M. Einsiedler and E. Lindenstrauss gave an alternative exposition of the ergodic approach which in fact allows for stronger results.

More generally, Linnik's ergodic method carries over when one replaces the quadratic form $x^2+y^2+z^2$ defined over $\Qq$ by any non-degenerate ternary quadratic form $Q$ defined over a fixed number field $F$. For instance, suitable generalizations of Theorems \ref{linnikinfty} and \ref{linnikQ} over number fields have been obtained by Teterin \cite{Teterin}. All the main theorems of the present text remain valid when suitably adapted to this more general context.  In this setting, the condition $d \equiv \pm 1$ modulo $5$ is 
to be replaced by the existence of a fixed prime ideal in $F$ which {\em splits} in $F(\sqrt{-d \, \mathrm{disc}(Q)})$. 
\subsection{The discriminant aspect} Theorems \ref{mvQ} and \ref{MVinfty} give results that have nearly optimal uniformity in the modulus $q$ or in the radius of the ball
  $\rho$.  However, the ternary form $Q$ whose representations of integers we study is fixed as $Q(x,y,z) = x^2 + y^2 + z^2$.  It would be interesting to investigate the option
 of proving analogues of these results where $Q$ is allowed to vary, with the aim of obtaining optimal uniformity in the discrimant
 $\mathrm{disc}(Q)$. In particular, if the form is definite,  its genus grows on the order of $\disc(Q)$.
One would envisage an analogue of Theorem \ref{mvQ} that establishes that {\em almost every} 
$Q' \in \mathrm{genus}(Q)$ represents $d$ (if everywhere locally representable), so long as $|\mathrm{genus}(Q)|\ll d^{1/2-\delta}$.

One example where this can be carried out completely is when $Q$ is the reduced norm on the space of trace zero elements in the quaternion algebra which  is
ramified at $\infty$ and at some prime $q>2$: then $|\gen(Q)|\sim \frac{q}{12}$. In that case the equidistribution
results  may be interpreted in terms of supersingular reduction of CM elliptic curves (cf. the paper of D. Gross\cite{gross}): 

Let $\mcE^{ss}_{q}$ denote the set of isomorphism classes
 of supersingular elliptic curves over $\overline{\Ff}_q$; these are in fact defined over $\Ff_{q^2}$. Then $\mcE^{ss}_{q}$ is canonically identified with $\gen(Q)$
 and so has size $\sim q/12$. For $K$ an imaginary quadratic field in which $q$ is inert, let $\mcE_{\OO_{K}}$ denote the set of elliptic curves in characteristic $0$ with
 complex multiplication by $\OO_{K}$; these curves are defined over the Hilbert class field of $K$, and 
 $$|\mcE_{\OO_{K}}|=|\disc(K)|^{1/2+o(1)}.$$ For
 $\mfq$, a place in $\ov\Qq$ above $p$, we have a ``reduction mod $\mfq$''-map
 $$\rmr_{\mfq}:\mcE_{\OO_{K}}\mapsto \mcE^{ss}_{q}.$$
 For any $\ov E\in \mcE^{ss}_{q}$ define
 $$\dev(\ov E)=\frac{1}{\mu(\ov E)}\frac{|\rmr_{\mfq}^{-1}(\ov E)|}{|\mcE_{\OO_{K}}|}-1$$
 where
 $$\mu(\ov E)=\frac{1/|\Aut(\ov E)|}{\sum_{{\ov E'}\in\mcE^{ss}_{q}}1/|\Aut(\ov E')|};$$
$\mu$ defines a probability measure on $\mcE^{ss}_{q}$.
 
 An analog of Theorem \ref{mvQ} in this context is the following:
 \begin{thm}[Equidistribution of ``Grauss'' points] Fix $\delta , \eta > 0$. Let $q$ be as above, and let $p$,  be a fixed auxiliary prime with $(p,2q)=1$. Let $K$ range over the imaginary quadratic fields such that
 \begin{enumerate}
 \item $q$ is inert in $K$,
 \item $p$ is split in $K$.\label{split}
 \end{enumerate} 
Set $d=|\disc(K)|$. As long as $q\leq d^{1/2-\delta}$, the $\mu$-measure\footnote{one could replace ``the $\mu$-measure of'' by ``the fraction of'' since $1\leq \Aut(\ov E)\leq 12$}
 of $\ov E\in \mcE^{ss}_{q}$ such that 
 $$|\dev(\ov E)|>\eta$$
 tends to $0$ as $d\ra+\infty$.
 \end{thm}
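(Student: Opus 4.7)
The plan is to transpose the proof of Theorem \ref{mvQ} into the quaternionic setting associated to the definite quaternion algebra $\B = \B_{q,\infty}$ ramified exactly at $q$ and $\infty$. Fix a maximal order $\OO \subset \B$ and let $Q$ denote the reduced norm restricted to the trace-zero part $\OO^{(0)}$. By Deuring's correspondence, $\mcE^{ss}_q$ is in canonical bijection with the classes in the $\SO_Q$-genus of $\OO^{(0)}$, and this genus is in turn the adelic quotient $\SO_Q(\Q)\bash\SO_Q(\Aaf)/\SO_Q(\whZ)$; the weighting by $1/|\Aut(\ov E)|$ matches the natural Tamagawa-type measure, so that $\mu$ is the pushforward of Haar measure. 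Because $q$ is inert in $K$, the field $K$ embeds in $\B$; fix such an embedding and let $T \subset \PB$ be the corresponding rank-one torus. As in \S \ref{classgroupadelicsection}--\S \ref{sec:adelicredq}, $\mcE_{\OO_K}$ is a $\Pic(\OO_K)$-torsor and $\rmr_\mfq$ is the natural projection $T(\Q)\bash T(\Aaf)/T(\whZ) \to \SO_Q(\Q)\bash\SO_Q(\Aaf)/\SO_Q(\whZ)$.

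Since $p \neq q$ is split in $K$ and unramified in $\B$, one has $\SO_Q(\Q_p) \simeq \PGL_2(\Q_p)$, so $\SO_Q(\Q_p)/\SO_Q(\Z_p)$ is the $(p+1)$-regular Bruhat--Tits tree $\mcT_p$. Strong approximation for $\B^1$, exactly as in Lemma \ref{le:approximation}, presents $\mcE^{ss}_q$ as a finite quotient of $\mcT_p$; this is a Pizer graph, Ramanujan by Jacquet--Langlands together with Eichler--Deligne, yielding the exact analog of Proposition \ref{JLD} with non-trivial eigenvalues bounded by $2\sqrt{p}$. Fix a uniformizer $\pi$ of $K_\mfp$; the corresponding element $t_\pi \in T(\Q_p)$ implements the action of $[\mfp]$ on $\mcE_{\OO_K}$, and its orbit through any $\ov E$ descends to a non-backtracking walk $\gamma_{\ov E}$ on the Pizer graph. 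The shadowing lemma, analogous to Proposition \ref{Naddles}, then follows verbatim from the tree interpretation: if $\gamma_{\ov E}^{(\ell)} = \gamma_{\ov E'}^{(\ell)}$, then $\ov E$ and $\ov E'$ reduce to the same supersingular class and agree modulo $\mfp^\ell \ov{\mfp}^\ell$ in the adelic picture.

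The large-deviation estimate for non-backtracking walks (Proposition \ref{chernofflemma}) applies as a black box. The essential remaining input is the quaternionic basic lemma: for $|e| < d$, the number of pairs $(\lambda_1,\lambda_2) \in (\OO^{(0)})^2$ with $\Nr(\lambda_i) = d$ and bilinear pairing equal to $e$ should be $\ll_\eps (dq)^\eps$. Via the orthogonal complement construction of \S \ref{orthocomplement} adapted to $\B$, such pairs correspond, up to the finite group of units of $\OO$, to representations of the binary form $dX^2 + eXY + dY^2$ by the ternary form $Q$; by genus theory combined with Pall's theorem (or the Siegel mass formula), this count is controlled by $|\Pic[2]|$ of the relevant quadratic order together with a divisor-function contribution.

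With these three inputs in hand, we replicate the second-moment argument of \S \ref{mvQproof}. Choose $\ell$ so that $p^{2\ell}$ has order $|\mcE_{\OO_K}|/|\mcE^{ss}_q| \sim d^{1/2}/q$, which is legitimate precisely when $q \leq d^{1/2-\delta}$. A hypothetical set $\Bscr \subset \mcE^{ss}_q$ of $\ov E$ with $|\mathrm{dev}(\ov E)| > \eta$ and $\mu(\Bscr) \geq \eta$ would force a positive proportion of the trajectories $\gamma_{\ov E}$ to spend atypically much time in $\Bscr$; but the basic lemma tightly controls the number of trajectory collisions, and the expander large-deviation estimate contradicts this. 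The main obstacle is the basic lemma in the required uniformity: since the ternary form $Q$ has discriminant a power of $q$, one must verify that the Pall/Siegel count retains polylogarithmic dependence in both $d$ and $q$ with absolute implicit constants, with no hidden polynomial dependence on $q$ coming from local densities at $q$. Granting this uniformity, the rest of the proof is formally identical to that of Theorem \ref{mvQ}.
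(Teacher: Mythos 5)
Your proposal is correct and follows essentially the same route as the paper, whose own proof is only a two-sentence sketch: run the argument of Theorem \ref{mvQ} in the setting of the definite quaternion algebra ramified at $q$ and $\infty$, with the Pizer graph at the auxiliary split prime $p$ playing the role of $\Hd(q)$, and with the sole genuinely new input being the analog of Proposition \ref{meansquare}. Your choice of $\ell$ with $q\,p^{2\ell}\asymp d^{1/2}$ is exactly where the hypothesis $q\leq d^{1/2-\delta}$ enters, as in the paper.

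The one place you diverge is the counting lemma. You propose to bound the number of pairs of trace-zero elements of norm $d$ with prescribed pairing via the orthogonal-complement construction of \S\ref{orthocomplement} together with Pall's theorem or the Siegel mass formula, and you rightly flag that the implicit constants must be uniform in $q$ since the ternary form now has discriminant a power of $q$. The paper instead obtains this count from a generalization of the geometric side of Gross's special-value formula \cite[Prop.~10.8]{gross}, which expresses the relevant pair counts explicitly in terms of Hurwitz class numbers of discriminants $\leq 4d$ and thereby delivers the $\ll_\eps (dq)^\eps$ bound with the uniformity you are worried about. So the uncertainty you leave open is real but is precisely what the citation of Gross resolves; modulo that substitution, your argument and the paper's coincide.
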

 The proof is similar to that of Theorem \ref{mvQ}; what is needed is the analog of Corollary \ref{meansquare}. It is proven by a suitable
generalization of the geometric part of the Gross formulas \cite[Prop. 10.8]{gross}. As noted in \cite{Michel}, using the formulas of Gross
 and subconvex bounds for $L$-functions, one can prove a version of this result, with $\mu$-measure $0$ and without the condition \refs{split},
as long as $q$ is less than a small positive power of $d$; under GRH this may be further extended to $q\leq d^{1/4-\delta}$.

\subsection{ Representation of quadratic forms by quadratic forms}

As we briefly discussed in \S \ref{linniklemma} the problem of studying the representation of an integer by a quadratic form admits another sort of generalization.   Namely, one can ask about representations of a varying integral quadratic form of rank $m$ by another, fixed, quadratic form of higher rank $n\geq m$.  When $m=1$, $n=3$, and the rank-$3$ form is $x^2 + y^2 + z^2$, we are in the situation of the present paper.   The general problem basically splits into two questions: 
\begin{itemize}
\item[-] the existence of such representations;
\item[-] how these are distributed (when they exist) in real or $p$-adic topologies. 
\end{itemize} 

As is explained in \cite{EV}, both the existence and the distribution problem can be understood in terms of the distribution properties of adelic orbits of some algebraic subgroup $H$ inside an adelic  homogeneous space associated to some ambient algebraic group $G \supset H$. More precisely, we can take $G$ to be a form of $\SO_n$ and $H$ a form of $\SO_{n-m}$.

The work \cite{EV} considered cases for which $n-m\geq 3$. In that case, since noncompact forms of $\SO_{n-m}$ have plenty of unipotents, one can apply the results of M. Ratner to show that (under suitable technical hypotheses) the adelic orbit of the smaller group equidistributes in the adelic quotient of the larger group; this suffices to prove the existence of such representations.   The second question, regarding equidistribution, was not addressed in \cite{EV} but likely follows from similar methods. 

This is in sharp contrast with the present situation where $n=3$, $m=1$. In this case, the smaller group is a torus (a form of $\SO_{2}$); in particular, Ratner's theorems do not apply, and one needs to use the special methods described in the present paper. It should be noted, moreover, that we are helped by the fact that $\SO_2 \subset \SO_3$ is a  {\em maximal} torus; it seems that the problem of studying representations of, say, forms of rank $2$ by a given form of rank $4$, would be significantly harder. 
\section{Harmonic analysis and Linnik's method.} \label{harmonic}

\subsection{Duke's approach via harmonic analysis} \label{dukeHA}
Although our main concern is with extending Linnik's method, we briefly remark on Duke's method and the relevance of modular $L$-functions to our problem. 

As we have remarked, the condition ``$d \equiv \pm 1$ modulo $5$'' has the effect
of ensuring that $5$ splits in $\Q(\sqrt{-d})$; one could replace the role of $5$ by any other fixed prime, as did Linnik. The problem of removing such a constraint, however, proved difficult. This question, i.e.:
\begin{equation}\label{ls}
\mbox{Can we guarantee the existence of a small $p$ with $\left(\frac{-d}{p} \right) = 1$?}\end{equation}
is a very difficult and deep one, for it is intimately connected to the issue of the Landau-Siegel ``exceptional'' zero, i.e. to effective lower bounds
 for the class number of $\Q(\sqrt{-d})$.  For instance, observe that proving that there exists a prime $p$ as in \eqref{ls}
with $p \leq d^{\epsilon}$ would immediately show that
the class number of $\Q(\sqrt{-d})$ tends to $\infty$ as $d\ra \infty$; itself, far from a trivial result.  
 
It was therefore a considerable breakthrough when W. Duke\footnote{partly in collaboration with Schulze-Pillot} \cite{Duke,DSP} established Theorem \ref{linnikQ} and Theorem \ref{linnikinfty}
without the auxiliary assumption. Duke's original approach used the Maass-Shimura theta correspondence to
express the Weyl sums attached to these equidistribution problems, i.e.
$$W(\varphi;d)=\frac{1}{|\Hd|}\sum_{\x\in\Hd}\varphi (\obfx)$$
for $\varphi$ is a (smooth) function on $S^2$ (resp. $\Hd(q)$) with zero mean with respect to the Lebesgue (resp. the counting) measure, in terms of Fourier coefficients of some half-integral weight holomorphic form $\theta(\varphi)$ \cite{Waldspurger1}; indeed, $\theta(\varphi)$
is a classical $\theta$-series. 
The decay of the Weyl sums then followed from non-trivial bounds for these Fourier coefficients, which were obtained using 
in a key way ideas and results of H. Iwaniec \cite{Iw87}.

\subsection{Central values of automorphic $L$-function: Waldspurger's formula}
It was quickly realized that these problems are also related to $L$-functions and to the problem of bounding them non-trivially at the central point: if $\varphi$ is an eigenfunction of (almost all) the Hecke operators\footnote{This is not a restriction, since the space of functions on $S^2$ or $\Hd(q)$ is generated by such Hecke eigenforms.}, acting on $S^2$ (resp. $\Hd(q)$), then by a formula of Waldspurger \cite{Waldspurger2} one has
\be\label{waldspurger}\frac{|W(\varphi;d)|^2}{\peter{\varphi,\varphi}}=c\frac{d^{1/2}}{|\Hd|^2}\frac{L(\pi,1/2)L(\pi\otimes\chi_{d},1/2)}{L(\pi,\Ad,1/2)}I_{\infty}(\varphi;d)\prod_{p|6q}I_{p}(\varphi;d)\ee
where  
\begin{itemize}
\item[-] $\peter{\ ,\ }$ is the inner product associated to the Lebesgue (resp. counting) probability measure on $S^2$ (resp. $\Hd(q)$)  and $c>0$ is an absolute explicit constant;
\item[-] $\pi$ is the automorphic representation of $\PGL_{2}$ in Jacquet-Langlands correspondence with the automorphic representation of $\SO_{3}\cong\PB$ generated by $\varphi$; $\pi$ is a discrete series representation (i.e. corresponding to an holomorphic modular form) of conductor dividing $18q^2$. \item[-] $\chi_{d}$ is the Kronecker symbol associated with the quadratic field
 $\Q(\sqrt{-d})$;
 \item[-] the factors $I_{\infty}(\varphi;d)$, $I_{p}(\varphi;d)$, $p|6q$ are some local integrals (real or $p$-adic) which can be bounded explicitly in terms of $\varphi$ but {\em independently} of $d$.
 \end{itemize} 
 From this formula, one sees that the decay of $W(\varphi;d)$ as $d\ra\infty$ is a consequence of the so-called {\em subconvex} bound: there is some absolute $\delta>0$
 \be\label{subconvex}L(\pi\otimes\chi_{d},1/2)\ll_{\pi}d^{1/2-\delta}.\ee
 Such a bound was proven, for the first time, in \cite{Iw87}.
   A few years later, a more general bound (with $1/2$ replaced by $1/2 + it $) was obtained by Duke, Friedlander, Iwaniec by an entirely different method \cite{DFI1} (again without any condition on $d$). 
 \begin{rem*} The method of Duke has been generalized by Duke and Schulze-Pillot to general ternary quadratic forms over $\Qq$~\cite{DSP}. More recently, the alternative approach involving Waldspurger's formula \refs{waldspurger} has been extended to treat ternary forms over general number fields, in particular, for totally definite ternary forms over totally real number fields: this has been carried out by Cogdell, Piatetsky-Shapiro and Sarnak \cite{CPSS}; in this generality the required subconvex bound may be found in \cite{Ve}.  For more comments about this we refer to \cite{MVICM} and to \cite{BHGAFA} where a complete exposition of these arguments is given along with sharp form of \refs{subconvex}.
 \end{rem*}

\subsection{Ergodic Theory vs. Harmonic Analysis}
Now let us compare what the ergodic and harmonic-analysis techniques yield,
as regards to Theorem \ref{mvQ} and \ref{MVinfty}. It is possible to show that the product of the local integrals $I_{\infty}(\varphi;d)\prod_{p|6q}I_{p}(\varphi;d)$ is bounded in terms of some Sobolev norm on $\varphi$. Moreover, the dependence on $\pi$ in the bound \refs{subconvex}
is polynomial in $Q(\pi)$, the analytic conductor of $\pi$ (as defined by Iwaniec and Sarnak \cite{IS}). From this one can deduce, without any condition on $d$ that,  for {\em every} $\obfx$ (resp. every $x\in S^2$), $|\dev(\obfx)|$ (resp. $|\dev(\Omega(x,\rho))|$)
is  bounded by a small negative power of $d$ as long as $q$ (resp. $\rho^{-1}$) is less than a
 (small) positive power of $d$ \cite{Blomer,Duke2,fomenko}. As we explain below, assuming the {\em Generalized Riemann Hypothesis} (GRH), this result holds when $q$ (resp. $\rho^{-1}$) is less than $\leq d^{1/8-\nu}$ for any fixed $\nu>0$.
 
We consider for simplicity the case of  $\Hd(q)$.  Let $\mcB$ denote an orthogonal basis in the space of functions on $\Hd(q)$. The average
$$\sum_{\varphi\in\mcB}\frac{|\sum_{\x\in\Hd}\varphi(\obfx)|^2}{\peter{\varphi,\varphi}}$$
 is independent of $\mcB$.  Therefore, (picking as a basis $\{\delta_{\obfx},\ \obfx\in\Hd(q)\}$) such a sum equals
$$|\Hd(q)|\sum_{\obfx\in\Hd(q)}\bigl|\rmr_{q}^{-1}(\obfx)|^2.$$ 
Taking $\mcB$ of the form $\{1\}\cup\mcB_{0}$ where $\mcB_{0}$ is constituted of Hecke eigenforms, we obtain
 \begin{align*}\sum_{\varphi\in\mcB_{0}}\frac{|\sum_{\x\in\Hd}\varphi(\obfx)|^2}{\peter{\varphi,\varphi}}&=|\Hd(q)|\sum_{\obfx\in\Hd(q)}\bigl(\rmr_{q}^{-1}(\obfx)-\frac{|\Hd|}{|\Hd(q)|}\bigr)^2\\
& =\frac{|\Hd|^2}{|\Hd(q)|}\sum_{\obfx}\dev(\obfx)^2.\end{align*}
Hence by \refs{waldspurger},
$$\frac{|\Hd|^2}{|\Hd(q)|}\sum_{\obfx}\dev(\obfx)^2=c{d^{1/2}}\sum_{\varphi\in\mcB_{0}}
\frac{L(\pi,1/2)L(\pi\otimes\chi_{d},1/2)}{L(\pi,\Ad,1/2)}I_{\infty}(\varphi;d)\prod_{p|6q}I_{p}(\varphi;d).$$
The Generalized Riemann Hypothesis
  and a local computation\footnote{to bound the local integrals $I_{p}(\varphi;d)$}, shows that
 for any  $\eps>0$
\begin{equation} \label{GRHDEV} \sum_{\obfx}\dev(\obfx)^2\ll_{\eps} (qd)^\eps q^4/d^{1/2}.\end{equation} 
Linnik's basic Lemma alone shows the same result with an additional factor $(1+d/q^4)$
on the right-hand side; in particular, \eqref{GRHDEV} is known unconditionally for $q^4 > d$. 
Now \eqref{GRHDEV} shows, for $\nu>0$, that:
\begin{itemize}
\item[-]
$\dev(\obfx)\ll d^{-\nu}$ for any $\obfx\in\Hd(q)$ as long as $q\leq d^{1/8-\nu}$
 (thus proving the surjectivity of the reduction map $\rmr_{q}$ in this range);
 \item[-]
  given $\delta>0$, the fraction of $\obfx\in\Hd(q)$ for which
$|\dev(\obfx)|>\delta$ is bounded by $\ll_{\delta} d^{-4\nu}$ as long as $q\leq d^{1/4-\nu-\epsilon}$. 
\end{itemize}

In particular we recover Theorem \ref{mvQ}. 
On the other hand we don't see at the moment, even under GRH, how to prove\footnote{This type of situation is not uncommon; for instance, it is known that the GRH implies
 that the smallest prime in an arithmetic progression with modulus $q$ is $\leq c(\varepsilon) q^{2 +\varepsilon}$; while we surely expect the result with $2 +\varepsilon$ replaced by $1 +\varepsilon$, no proof of this is known even under GRH. }
 that there are {\em no} $\obfx$  for which $\rmr^{-1}_{q}(\obfx)=\emptyset$ in the range $d^{1/8}\leq q\leq d^{1/4-\nu}$.

\begin{rem} It should be noted that \refs{GRHDEV}, obtained by pointwise application of GRH, is essentially a ``sharp on average''
bound for the family $\mathcal{F}$ of $L$-values $$\{L(\pi,1/2)L(\pi\otimes\chi_{d},1/2),\ \pi\hbox{ of conductor $18 q^2$ and trivial central character} \}.$$
The size $|\mathcal{F}|$ of that family is $\approx q^2$ while the conductor $C$ of any of these $L$-functions is of size $\approx q^4d^2$. In particular, in the range $q^4 \approx d$, we have $|\mathcal{F}| \approx |C|^{1/6}$. 
 Starting with the original work of Weyl, there are, by now, several examples of families of $L$-functions satisfying a similar relation between the size of the family and the size of the conductor and for which the central values
could be bounded sharply on average without any hypothesis \cite{Weyl,CI,XLi}.  
\end{rem}

\begin{bibsection}

\begin{biblist}

\bib{Blomer}{article}{
   author={Blomer, Valentin},
   title={Uniform bounds for Fourier coefficients of theta-series with
   arithmetic applications},
   journal={Acta Arith.},
   volume={114},
   date={2004},
   number={1},
   pages={1--21},
}

\bib{BHGAFA}{article}{
   author={Blomer, Valentin},
   author={Harcos, Gergely},
   title={The spectral decomposition of shifted convolution sums},
   journal={GAFA (to appear)},
   date={2009},
   note={{\tt http://arxiv.org/abs/0904.2429}}

}

\bib{PSPM9}{book}{
   author={Borel, A.},
   author={Mostow, G., Edts.},
   title={Proceedings of Symposia in Pure Mathematics. Vol. IX: Algebraic
   groups and discontinuous subgroups},
   series={Proceedings of the Symposium in Pure Mathematics of the American
   Mathematical Society held at the University of Colorado, Boulder,
   Colorado (July 5-August 6},
   volume={1965},
   publisher={American Mathematical Society},
   place={Providence, R.I.},
   date={1966},
   pages={vii+426},
}

\bib{CPSS}{article}{
   author={Cogdell, James W.},
   title={On sums of three squares},
   language={English, with English and French summaries},
   note={Les XXII\`emes Journ\'ees Arithmetiques (Lille, 2001)},
   journal={J. Th\'eor. Nombres Bordeaux},
   volume={15},
   date={2003},
   number={1},
   pages={33--44},
   issn={1246-7405},
}

\bib{CI}{article}{
    author={Conrey, J. B.},
    author={Iwaniec, H.},
     title={The cubic moment of central values of automorphic $L$-functions},
   journal={Ann. of Math. (2)},
    volume={151},
      date={2000},
    number={3},
     pages={1175\ndash 1216},
}

\bib{Duke}{article}{
    author={Duke, W.},
     title={Hyperbolic distribution problems and half-integral weight Maass
            forms},
   journal={Invent. Math.},
    volume={92},
      date={1988},
    number={1},
     pages={73\ndash 90},
      issn={0020-9910},
}

\bib{Duke2}{article}{
   author={Duke, W.},
   title={On ternary quadratic forms},
   journal={J. Number Theory},
   volume={110},
   date={2005},
   number={1},
   pages={37--43},
   issn={0022-314X},
}

\bib{DFI1}{article}{
   author={Duke, W.},
   author={Friedlander, J.},
   author={Iwaniec, H.},
   title={Bounds for automorphic $L$-functions},
   journal={Invent. Math.},
   volume={112},
   date={1993},
   number={1},
   pages={1--8},
   issn={0020-9910},
}

\bib{DSP}{article}{
    author={Duke, W.},
    author={Schulze-Pillot, R.},
     title={Representation of integers by positive ternary quadratic forms
            and equidistribution of lattice points on ellipsoids},
   journal={Invent. Math.},
    volume={99},
      date={1990},
    number={1},
     pages={49\ndash 57},
}

\bib{ELMV3}{article}{
   author={Einsiedler, Manfred},
   author={Lindenstrauss, Elon},
   author={Michel, Ph.},
   author={Venkatesh, A.},
   title={The distribution of periodic torus orbits on homogeneous spaces :
    Duke's theorem for cubic fields },
   journal={Ann. of Math. (to appear)},
   note={{\tt  http://arxiv.org/abs/0708.1113}} 
   date={2007}
}

\bib{ELMV1}{article}{
   author={Einsiedler, Manfred},
   author={Lindenstrauss, Elon},
   author={Michel, Ph.},
   author={Venkatesh, A.},
   title={The distribution of periodic torus orbits on homogeneous spaces},
   journal={Duke Math. Journal},
   volume={148},
   number={1},
   note={\tt  http://arxiv.org/abs/math/0607815} 
   date={2009},
   pages={119-174} 
}

\bib{ELMV2}{article}{
   author={Einsiedler, Manfred},
   author={Lindenstrauss, Elon},
   author={Michel, Ph.},
   author={Venkatesh, A.},
   title={The distribution of periodic torus orbits on homogeneous spaces :
    Duke's theorem for quadratic fields },
   journal={Preprint},
   date={2009}, 
}

\bib{EV}{article}{
   author={Ellenberg, Jordan S.},
   author={Venkatesh, Akshay},
   title={Local-global principles for representations of quadratic forms},
   journal={Invent. Math.},
   volume={171},
   date={2008},
   number={2},
   pages={257--279},
   issn={0020-9910},
}

\bib{fomenko}{article}{
   author={Golubeva, E. P.},
   author={Fomenko, O. M.},
   title={Asymptotic distribution of lattice points on the three-dimensional
   sphere},
   language={Russian},
   journal={Zap. Nauchn. Sem. Leningrad. Otdel. Mat. Inst. Steklov.
   (LOMI)},
   volume={160},
   date={1987},
   number={Anal. Teor. Chisel i Teor. Funktsii. 8},
   pages={54--71, 297},
   issn={0373-2703},
   translation={
      journal={J. Soviet Math.},
      volume={52},
      date={1990},
      number={3},
      pages={3036--3048},
      issn={0090-4104},
   },
}

\bib{Gauss}{book}{
   author={Gauss, Carl Friedrich},
   title={Disquisitiones arithmeticae},
   note={Translated and with a preface by Arthur A. Clarke;
   Revised by William C. Waterhouse, Cornelius Greither and A. W.
   Grootendorst and with a preface by Waterhouse},
   publisher={Springer-Verlag},
   place={New York},
   date={1801},
   pages={xx+472},
   isbn={0-387-96254-9},
}

\bib{gross}{article}{
   author={Gross, Benedict H.},
   title={Heights and the special values of $L$-series},
   conference={
      title={Number theory},
      address={Montreal, Que.},
      date={1985},
   },
   book={
      series={CMS Conf. Proc.},
      volume={7},
      publisher={Amer. Math. Soc.},
      place={Providence, RI},
   },
   date={1987},
   pages={115--187},
}

\bib{HLW}{article}{
   author={Hoory, Shlomo},
   author={Linial, Nathan},
   author={Wigderson, Avi},
   title={Expander graphs and their applications},
   journal={Bull. Amer. Math. Soc. (N.S.)},
   volume={43},
   date={2006},
   number={4},
   pages={439--561 (electronic)},
   issn={0273-0979},
}

\bib{Iw87}{article}{
   author={Iwaniec, Henryk},
   title={Fourier coefficients of modular forms of half-integral weight},
   journal={Invent. Math.},
   volume={87},
   date={1987},
   number={2},
   pages={385--401},
   issn={0020-9910},
}

\bib{IS}{article}{
    author={Iwaniec, H.},
    author={Sarnak, P.},
     title={Perspectives on the analytic theory of $L$-functions},
      note={GAFA 2000 (Tel Aviv, 1999)},
   journal={Geom. Funct. Anal.},
      date={2000},
    number={Special Volume},
     pages={705\ndash 741},
      issn={1016-443X},
}

\bib{JL}{book}{
   author={Jacquet, H.},
   author={Langlands, R. P.},
   title={Automorphic forms on ${\rm GL}(2)$},
   note={Lecture Notes in Mathematics, Vol. 114},
   publisher={Springer-Verlag},
   place={Berlin},
   date={1970},
   pages={vii+548},
}

\bib{jone:qf}{book}{
author = {Jones, Burton W.},
title = {The Arithmetic Theory of Quadratic Forms.},
series = {Carus Monograph Series},
volume = {no. 10.},
publisher = {The Mathematical Association of America},
place = {Buffalo, N. Y.},
year =  {1950},
pages =  {x+212},
}

\bib{Knapp}{article}{
   author={Knapp, A. W.},
   title={Introduction to the Langlands program},
   conference={
      title={Representation theory and automorphic forms},
      address={Edinburgh},
      date={1996},
   },
   book={
      series={Proc. Sympos. Pure Math.},
      volume={61},
      publisher={Amer. Math. Soc.},
      place={Providence, RI},
   },
   date={1997},
   pages={245--302},
}

\bib{XLi}{article}{
   author={Li, X.},
   title={Bounds for $GL(3) \times GL(2)$ $L$-functions and $GL(3)$ $L$-functions},
   journal={Ann. of Math.},
   date={2008},
   note={(to appear)},
}

\bib{Lin}{article}{
   author={Lindenstrauss, Elon},
   title={Invariant measures and arithmetic quantum unique ergodicity},
   journal={Ann. of Math. (2)},
   volume={163},
   date={2006},
   number={1},
   pages={165--219},
   issn={0003-486X},
}

\bib{EPNF}{book}{
   author={Linnik, Yu. V.},
   title={Ergodic properties of algebraic fields},
   series={Translated from the Russian by M. S. Keane. Ergebnisse der
   Mathematik und ihrer Grenzgebiete, Band 45},
   publisher={Springer-Verlag New York Inc., New York},
   date={1968},
   pages={ix+192},
}

\bib{Lubot}{book}{
   author={Lubotzky, Alexander},
   title={Discrete groups, expanding graphs and invariant measures},
   series={Progress in Mathematics},
   volume={125},
   note={With an appendix by Jonathan D. Rogawski},
   publisher={Birkh\"auser Verlag},
   place={Basel},
   date={1994},
   pages={xii+195},
   isbn={3-7643-5075-X},
}

\bib{LPS1}{article}{
   author={Lubotzky, A.},
   author={Phillips, R.},
   author={Sarnak, P.},
   title={Hecke operators and distributing points on the sphere. I},
   note={Frontiers of the mathematical sciences: 1985 (New York, 1985)},
   journal={Comm. Pure Appl. Math.},
   volume={39},
   date={1986},
   number={S, suppl.},
   pages={S149--S186},
   issn={0010-3640},
}

\bib{LPS}{article}{
   author={Lubotzky, A.},
   author={Phillips, R.},
   author={Sarnak, P.},
   title={Ramanujan graphs},
   journal={Combinatorica},
   volume={8},
   date={1988},
   number={3},
   pages={261--277},
   issn={0209-9683},
}

\bib{Malyshev}{article}{
   author={Malyshev, A. V.},
   title={Discrete ergodic method and its applications to the arithmetic of
   ternary quadratic forms},
   conference={
      title={Topics in classical number theory, Vol. I, II},
      address={Budapest},
      date={1981},
   },
   book={
      series={Colloq. Math. Soc. J\'anos Bolyai},
      volume={34},
      publisher={North-Holland},
      place={Amsterdam},
   },
   date={1984},
   pages={1023--1049},
}

\bib{Michel}{article}{
   author={Michel, Ph.},
   title={The subconvexity problem for Rankin-Selberg $L$-functions and
   equidistribution of Heegner points},
   journal={Ann. of Math. (2)},
   volume={160},
   date={2004},
   number={1},
   pages={185--236},
   issn={0003-486X},
}

\bib{MVICM}{article}{
   author={Michel, Ph.},
   author={Venkatesh, Akshay},
   title={Equidistribution, $L$-functions and ergodic theory: on some
   problems of Yu.\ Linnik},
   conference={
      title={International Congress of Mathematicians. Vol. II},
   },
   book={
      publisher={Eur. Math. Soc., Z\"urich},
   },
   date={2006},
   pages={421--457},
}

\bib{MVNV}{article}{
   author={Michel, Ph.},
   author={Venkatesh, Akshay},
   title={Heegner points and non-vanishing of Rankin/Selberg $L$-functions},
   conference={
      title={Analytic number theory},
   },
   book={
      series={Clay Math. Proc.},
      volume={7},
      publisher={Amer. Math. Soc.},
      place={Providence, RI},
   },
   date={2007},
   pages={169--183},
}

\bib{Pall}{article}{
   author={Pall, Gordon},
   title={Representation by quadratic forms},
   journal={Canadian J. Math.},
   volume={1},
   date={1949},
   pages={344--364},
   issn={0008-414X},
}

\bib{PR}{book}{
   author={Platonov, Vladimir},
   author={Rapinchuk, Andrei},
   title={Algebraic groups and number theory},
   series={Pure and Applied Mathematics},
   volume={139},
   note={Translated from the 1991 Russian original by Rachel Rowen},
   publisher={Academic Press Inc.},
   place={Boston, MA},
   date={1994},
   pages={xii+614},
   isbn={0-12-558180-7},
}

\bib{samuel}{book}{
   author={Samuel, Pierre},
   title={Algebraic theory of numbers},
   series={Translated from the French by Allan J. Silberger},
   publisher={Houghton Mifflin Co.},
   place={Boston, Mass.},
   date={1970},
   pages={109},
}

\bib{serre}{book}{
   author={Serre, J.-P.},
   title={A course in arithmetic},
   note={Translated from the French;
   Graduate Texts in Mathematics, No. 7},
   publisher={Springer-Verlag},
   place={New York},
   date={1973},
   pages={viii+115},
}

\bib{Serretree}{book}{
   author={Serre, Jean-Pierre},
   title={Trees},
   series={Springer Monographs in Mathematics},
   note={Translated from the French original by John Stillwell;
   Corrected 2nd printing of the 1980 English translation},
   publisher={Springer-Verlag},
   place={Berlin},
   date={2003},
   pages={x+142},
   isbn={3-540-44237-5},
}

\bib{Shemanske}{article}{ 
    AUTHOR = {Shemanske, Thomas R.},
     TITLE = {Representations of ternary quadratic forms and the class
              number of imaginary quadratic fields},
   JOURNAL = {Pacific J. Math.},
  FJOURNAL = {Pacific Journal of Mathematics},
    VOLUME = {122},
      YEAR = {1986},
    NUMBER = {1},
     PAGES = {223--250},
      ISSN = {0030-8730},
     CODEN = {PJMAAI},
   MRCLASS = {11E41 (11R29)},
}

\bib{Teterin}{article}{
   author={Teterin, Yu. G.},
   title={Representation of algebraic integers by ternary quadratic forms},
   language={Russian},
   note={Studies in number theory, 8},
   journal={Zap. Nauchn. Sem. Leningrad. Otdel. Mat. Inst. Steklov. (LOMI)},
   volume={121},
   date={1983},
   pages={157--168},
   issn={0373-2703},
}

\bib{Ve}{article}{
    author={Venkatesh, A.},
     title={Sparse equidistribution problems, period bounds, and subconvexity.},
   journal={Ann. of Math. (to appear)},
      date={2005},
  note={\tt http://arxiv.org/abs/math/0506224},
}

\bib{Venkov}{article}{
    author={Venkov, B. A.},
     title={On the arithmetic of quaternion algebras.},
   journal={Izv. Akad. Nauk},
      date={1922},
     pages={205\ndash 241},
}

\bib{Venkov2}{article}{
    author={Venkov, B. A.},
     title={On the arithmetic of quaternion algebras.},
   journal={Izv. Akad. Nauk},
      date={1929},
     pages={489\ndash 509, 532\ndash562, 607\ndash622},
}

\bib{Vigneras}{book}{
   author={Vign{\'e}ras, Marie-France},
   title={Arithm\'etique des alg\`ebres de quaternions},
   language={French},
   series={Lecture Notes in Mathematics},
   volume={800},
   publisher={Springer},
   place={Berlin},
   date={1980},
   pages={vii+169},
   isbn={3-540-09983-2},
}

\bib{Waldspurger1}{article}{
   author={Waldspurger, J.-L.},
   title={Sur les coefficients de Fourier des formes modulaires de poids
   demi-entier},
   language={French},
   journal={J. Math. Pures Appl. (9)},
   volume={60},
   date={1981},
   number={4},
   pages={375--484},
   issn={0021-7824},
}

\bib{Waldspurger2}{article}{
   author={Waldspurger, J.-L.},
   title={Sur les valeurs de certaines fonctions $L$ automorphes en leur
   centre de sym\'etrie},
   language={French},
   journal={Compositio Math.},
   volume={54},
   date={1985},
   number={2},
   pages={173--242},
   issn={0010-437X},
}

 \bib{Weyl}{article}{
  author={Weyl, H.},
  title={Zur Absch\"atzung von $\zeta(1+it)$},
journal={Math. Zeit.},
   volume={10},
   date={1921},
   pages={88-101},
 }

\end{biblist}
\end{bibsection}
\end{document}